\theoremstyle{plain} 
\newtheorem{Theorem}{Theorem}[section]
\newtheorem{Lemma}[Theorem]{Lemma}
\newtheorem{Corollary}[Theorem]{Corollary}
\newtheorem{Proposition}[Theorem]{Proposition}
\newtheorem*{Proposition*}{Proposition}
\newtheorem*{TheoremA}{Theorem A}
\newtheorem*{TheoremB}{Theorem B}
\newtheorem*{TheoremC}{Theorem C}
\newtheorem*{TheoremD}{Theorem D}
\theoremstyle{definition}
\newtheorem{Definition}[Theorem]{Definition}
\newtheorem{Example}[Theorem]{Example}
\newtheorem*{Question*}{Question}
\theoremstyle{remark}
\newtheorem{Remark}[Theorem]{Remark}
\newtheorem*{Remark*}{Remark}
\newenvironment{ack}{\bigskip \noindent {\bf Acknowledgement}\it}
\newcommand{\Spec}{\operatorname{Spec}}
\newcommand{\Div}{\operatorname{Div}}
\newcommand{\tr}{\operatorname{tr}}
\newcommand{\Hom}{\operatorname{Hom}}
\newcommand{\End}{\operatorname{End}}
\newcommand{\Aut}{\operatorname{Aut}}
\newcommand{\id}{\operatorname{id}}
\newcommand{\sgn}{\operatorname{sgn}}
\newcommand{\Gal}{\operatorname{Gal}}
\newcommand{\Frob}{\operatorname{Frob}}
\newcommand{\GL}{\operatorname{GL}}
\newcommand{\Lie}{\operatorname{Lie}}
\newcommand{\CI}{\mathbb{C}_{\infty}}
\newcommand{\bC}{\mathbb{C}}
\newcommand{\bF}{\mathbb{F}}
\newcommand{\bG}{\mathbb{G}}
\newcommand{\bP}{\mathbb{P}}
\newcommand{\bT}{\mathbb{T}}
\newcommand{\bZ}{\mathbb{Z}}
\newcommand{\cO}{\mathcal{O}}
\newcommand{\fd}{\mathfrak{d}}
\newcommand{\fj}{\mathfrak{j}}
\newcommand{\fm}{\mathfrak{m}}
\newcommand{\fp}{\mathfrak{p}}
\newcommand{\fv}{\mathfrak{v}}
\newcommand{\fP}{\mathfrak{P}}
\newcommand{\fD}{\mathfrak{D}}
\newcommand{\CC}{\mathsf{C}}
\newcommand{\CD}{\mathsf{D}}
\newcommand{\Cc}{\mathsf{c}}
\newcommand{\degp}{d_{\fp}}
\newcommand{\ps}[1]{[\![#1]\!]}  
\newcommand{\ls}[1]{(\!(#1)\!)}
\newcommand{\cs}[1]{\langle #1 \rangle} 
\newcommand{\fsf}{\mathfrak{sf}}
\newcommand{\mot}{\mathsf{M}}
\newcommand{\FI}{\bF_{\!\infty}}
\newcommand{\Fp}{\bF_{\!\fp}}
\newcommand{\Fq}{\bF_{\!q}}
\newcommand{\Fbar}{\overline{\bF}}
\newcommand{\Fbarq}{\overline{\bF}_{\!q}}
\newcommand{\up}{u_\fp}
\numberwithin{equation}{subsection}
\title{ 
\bf Special Functions and Gauss-Thakur Sums in Higher Rank and Dimension}
\author{Quentin Gazda\thanks{Current address: 
Univ Lyon, Universit\'e Jean Monnet Saint-\'Etienne, CNRS UMR 5208, Institut Camille Jordan, F-42023 Saint-\'Etienne, France},\\ and Andreas Maurischat\thanks{Current address: Interdisciplinary Center for Scientific Computing, University of Heidelberg, Im Neuenheimer Feld 205, G-69120 Heidelberg, Germany }}
\date{August 21, 2020}
\begin{document}

\maketitle



\begin{abstract}
Anderson generating functions have received a growing attention in function field arithmetic in the last years.
Despite their introduction by Anderson in the 80s where they were at the heart of comparison isomorphisms, further important applications e.g. to transcendence theory have only been discovered recently. The Anderson-Thakur special function interpolates L-values via Pellarin-type identities, and its values at algebraic elements recover Gauss-Thakur sums, as shown by Angl\`es and Pellarin.
For Drinfeld-Hayes modules, generalizations of Anderson generating functions have been introduced by Green-Papanikolas and -- under the name of ``special functions'' -- by Angl\`es-Ngo Dac-Tavares Ribeiro.

In this article, we provide a general construction of special functions attached to any Anderson A-module. We show direct links of the space of special functions to the period lattice, and to the Betti cohomology of the A-motive. 

We also undertake the study of Gauss-Thakur sums for Anderson A-modules, and show that the result of Angl\`es-Pellarin relating values of the special functions to Gauss-Thakur sums holds in this generality.
\end{abstract}

\section{Introduction}


\paragraph{Special functions}

The Carlitz module $\CC$ over the rational function field $\Fq(\theta)$ serves as a function field analogue of the multiplicative group $\bG_m$ over number fields. For example, it is used to investigate class field theory for  $\Fq(\theta)$, it comes with an exponential function $\exp_{\CC}$, and the kernel of this exponential -- called the period lattice of $\CC$ -- is an $\Fq[\theta]$-lattice of rank $1$ generated by the so called Carlitz period
\[ \tilde{\pi}=\sqrt[q-1]{-\theta}\cdot \theta \prod_{j=1}^{\infty} \left(1 - \theta^{1-q^j}\right)^{-1} \in \Fq\ls{\tfrac{1}{\theta}}(\sqrt[q-1]{-\theta}), \]
which plays an analogous role as $2\pi i$ does in the classical setting. 

\medskip

The Anderson-Thakur special function $\omega$ plays a central role for the Carlitz module, and is the most basic example of special functions that we are discussing in this paper. From $\omega$, one can recover the Carlitz period, define the "Betti homology" of $\CC$, and calculate special $L$-values (see \cite{pellarin}). These applications don't have clear analogue in number fields. 

Let us introduce some notations. Let $\CI$ denote the completion of an algebraic closure of $\Fq\ls{\tfrac{1}{\theta}}$ with respect to the absolute value given by $|\theta|=q$. 
The Carlitz module $\CC$ is the additive group $\bG_a(\CI)=\CI$ endowed with an $\Fq[t]$-action $\Cc$ given by $\Cc_t=\theta+\tau$ where $\tau$ is the $q$-power Frobenius map on $\CI$.
The Carlitz exponential is the unique $\Fq$-linear map
$\exp_\CC:\CI\to \bG_a(\CI)=\CI$ such that
\[ \exp_\CC(a(\theta) x)=\Cc_a (\exp_\CC(x)) \qquad \forall x\in \CI,\ a\in \Fq[t] \]
(or equivalently just $ \exp_\CC(\theta x)=\Cc_t (\exp_\CC(x)) \, \forall x\in \CI$)
and whose derivative is the identity. 

Let $\CI\cs{t}$ denote the Tate-algebra over $\CI$, namely the subring of power series in $\CI\ps{t}$ consisting of those power series which converge on the unit disc. One way to define $\omega(t)$ is via the following element in $\CI\cs{t}$ (see e.g.~\cite{el-guindy})):
\[  \omega(t)= \sum_{n=0}^\infty \exp_{\CC}\left(\frac{\tilde{\pi}}{\theta^{n+1}}\right)t^n. \]
The function $\omega$ was expressed succinctly by Angl\`es and Pellarin in \cite[Sect.~2.2]{angles} as
\begin{equation}\label{eq:pi-to-omega}
\omega(t)= \exp_{\CC}\left(\frac{\tilde{\pi}}{\theta-t}\right)
\end{equation}
by extending the Carlitz exponential $t$-linearly continuously to $\CI\cs{t}$.
In a concurrent manner, they extended the Carlitz action $\Cc$ to a $t$-linear continuous action on $\CI\cs{t}$, and readily obtained that
\[ (\Cc_t-t)(\omega)=\exp_\CC(\tilde{\pi})=0. \]
In words, $\omega$ is a ``$\CI\cs{t}$-point of the Carlitz module $\CC$'' for which the two $\Fq[t]$-actions -- the one via $\Cc$ and the other via multiplication by $t$ -- coincide. 
As  the Anderson-Thakur special function $\omega(t)$ is an invertible element in $\CI\cs{t}$, it even generates the $\Fq[t]$-module 
\[ \fsf(\CC):= \{ h\in \CC(\CI\cs{t}) \mid \Cc_t (h)=t\cdot h \} \]
of those functions on which both $\Fq[t]$-actions coincide. 
%

Another description of $\omega$, namely as
\[  \omega(t) = \sqrt[q-1]{-\theta} \prod_{j=0}^{\infty} \left(1 - \frac{t}{\theta^{q^j}}\right)^{-1} ,\]
stems from the interpretation of $\omega^{-1}$ as a rigid analytic trivialization of the Carlitz motive. 
This property boils down to the equation
\[  \omega^{(1)}(t) = (t-\theta)\omega(t), \]
where $()^{(1)}$ denotes the usual Frobenius twist on $\CI\cs{t}$, namely the $t$-linear continuous extension of $\tau$ to $\CI\cs{t}$. It readily implies the equality:
\begin{equation}\label{eq:drinfeld-solution-tau-solution}
 \{ h\in \CC(\CI\cs{t}) \mid \Cc_t (h)=t\cdot h \}\,\, =\,\, \{ h\in \CI\cs{t} \mid h^{(1)}=(t-\theta)h \}. 
\end{equation} 
This correspondence between solutions of the $t$-action equation (the left hand side of Equation \eqref{eq:drinfeld-solution-tau-solution}) and solutions of a $\tau$-difference equation (the right hand side of Equation \eqref{eq:drinfeld-solution-tau-solution}) can be traced back to the work of Sinha \cite{sinha}. It has also been observed in various other particular situations, although sometimes only over the field of fraction of $\CI\cs{t}$ or its analogue in the $A$-module case, respectively. For example, this duality appears for Carlitz tensor powers in \cite{andersonthakur}, for Drinfeld $\Fq[t]$-modules in \cite[\S 4.2]{pellarin08} (see also \cite[Sect.~2]{el-guindy}), for Drinfeld-Hayes $A$-modules in \cite{green}, and in \cite{tuan}, as well as for general Anderson $t$-modules in \cite[Sect.~3.2]{maurischat}.
We will show in Theorem \ref{funceq} that this duality holds in great generality for any Anderson $A$-module (see below).

\medskip

Let us first recall the case of Drinfeld-Hayes modules $\CD$ over some coefficient ring $A$. Green-Papanikolas \cite[Sect.~4]{green} (for $A$ the ring of functions on an elliptic curve regular outside infinity) and Angl\`es-Ngo Dac-Tavares Ribeiro \cite[Rmk.~3.10]{tuan} (general $A$) have noticed that Equation \eqref{eq:drinfeld-solution-tau-solution} holds where $(t-\theta)$ is replaced by Thakur's shtuka function of $\CD$, and $\CI\cs{t}$ by the affinoid algebra $\bT=A\hat{\otimes}_{\Fq}\CI$ with twist $()^{(1)}$ induced by the identity on $A$ and $\tau$ on $\CI$. Their methods rely on the Drinfeld/Mumford correspondence for rank one Drinfeld-modules.

Our approach recovers this equation from a more general framework. 
Let $(E,\phi)$ be an Anderson $A$-module over $\CI$, let $\mot=\Hom_{\bF,\CI}(E,\bG_{a,\CI})$ be its $A$-motive (which may be not finitely generated over $A\otimes_{\Fq} \CI$, if $E$ is not abelian), and $\mot_{\bT}:=\mot\otimes_{A\otimes \CI} \bT$ the scalar extension to $\bT=A\hat{\otimes}_{\Fq}\CI$.
We define the space of special functions to be the $A$-module
\[  \fsf(E) :=\{ h\in E(\bT) \mid \forall a\in A: \phi_a (h)=a\cdot h\} \]
similar as for the Carlitz case (for the precise definition of the notion see Section \ref{sec:special-functions}).

\begin{TheoremA}[Theorem \ref{funceq}]
There is a natural $A$-linear isomorphism 
\[\fsf(E)\longrightarrow \Hom_{\bT}^\tau(\mot_{\bT},\bT),\] where the latter denotes the $A$-module of $\tau$-equivariant $\bT$-linear homomorphisms. 
If $E$ is abelian and uniformizable, this induces an isomorphism of $A$-modules 
\[ \fsf(E)\longrightarrow \Hom_{A}(H_B(\mot),A), \]
where $H_B(\mot)=\{ m\in \mot_{\bT} \mid \tau_{\mot}(m)=m \}$ denotes the Betti homology of $\mot$.
\end{TheoremA}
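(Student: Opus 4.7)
The plan is to construct the first isomorphism as an ``evaluation at $h$'' pairing, and then in the uniformizable case to transfer the $\tau$-structure from the motive to its Betti lattice.

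Write $E = \bG_a^d$ and identify $\mot$ with $\CI\{\tau\}^d$, where $A$ acts on the right via $a \cdot m = m \circ \phi_a$. For $h \in \fsf(E) \subset E(\bT) = \bT^d$ and $m = \sum_j M_j \tau^j \in \mot$ with $M_j \in \CI^{1 \times d}$, I set
\[\Phi_h(m) := \sum_j M_j \cdot h^{(j)} \in \bT,\]
where $h^{(j)}$ denotes the coordinatewise $j$-fold Frobenius twist on $\bT$; this matches the paper's $t$-linear continuous extension of $\phi_a$ to $\bT$-points. The special function condition $\phi_a(h) = a h$ yields $A$-linearity directly: $\Phi_h(a \cdot m) = m(\phi_a(h)) = m(ah) = a \cdot \Phi_h(m)$, the last step because the twist is the identity on $A \subset \bT$. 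The $\tau$-equivariance follows from $\tau \cdot m = \sum_j M_j^{(1)} \tau^{j+1}$ in $\CI\{\tau\}$, whose $\Phi_h$-image is $\sum_j M_j^{(1)} h^{(j+1)} = \tau_\bT(\Phi_h(m))$. Extending $\bT$-linearly to $\mot_\bT$ defines the $A$-linear map $\Psi : h \mapsto \Phi_h$.

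For the inverse, let $e_1, \ldots, e_d \in \mot$ be the coordinate projections $E \to \bG_a$, a set of generators over $\CI\{\tau\}$. Given $\Phi \in \Hom_\bT^\tau(\mot_\bT, \bT)$, set $h_i := \Phi(e_i)$ and $h := (h_1, \ldots, h_d) \in E(\bT)$. Expanding $a \cdot e_i = e_i \circ \phi_a$ in $\mot$ as a $\CI\{\tau\}$-linear combination of the $e_j$ and applying $\Phi$, with $\tau$-equivariance used to carry Frobenius twists onto the $h_j$, produces $(\phi_a(h))_i = a h_i$, so $h \in \fsf(E)$. The identity $\Psi(h) = \Phi$ reduces to agreement on the $e_i$ (tautological) and then propagates by $\bT$-linearity and $\tau$-equivariance. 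Injectivity of $\Psi$ is immediate from $\Phi_h(e_i) = h_i$.

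For the second assertion, uniformizability of $E$ provides the rigid analytic trivialization $H_B(\mot) \otimes_A \bT \xrightarrow{\sim} \mot_\bT$ of $\bT$-modules respecting $\tau$. Combining with the first part and applying tensor-hom adjunction gives
\[\fsf(E) \simeq \Hom_\bT^\tau(\mot_\bT, \bT) \simeq \Hom_A(H_B(\mot), \bT^\tau) = \Hom_A(H_B(\mot), A),\]
the final equality being $\bT^\tau = A$, an easy check using $\CI^\tau = \Fq$. The main conceptual hurdle is committing to the right convention for evaluating $m \in \mot$ on $h \in E(\bT)$: using Frobenius twists on $\bT$ rather than $q^j$-th powers is what makes $A$-linearity and $\tau$-equivariance of $\Phi_h$ hold simultaneously, and is forced on us by how $\fsf(E)$ is itself defined.
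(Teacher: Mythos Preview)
Your proof is correct and follows essentially the same approach as the paper: both construct the isomorphism via the evaluation pairing $h\mapsto (m\mapsto m(h))$, with the second assertion handled identically through rigid analytic triviality and the identification $\bT^{\tau}=A$. The only difference is cosmetic---the paper first establishes the ambient isomorphism $E(\bT)\cong \Hom_{L}^{\tau}(\mot,\bT)$ via biduality and then restricts to $\fsf(E)$, whereas you work directly in coordinates on $\fsf(E)$ and build the inverse by hand (also note your $\CI\{\tau\}^d$ should be $L\{\tau\}^d$, since $E$ is defined over $L$).
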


The transition from this abstract isomorphism to descriptions as in Equation \eqref{eq:drinfeld-solution-tau-solution} can be given for abelian $A$-modules and is explained in Remark \ref{rem:explicit-systems-of-equations}. The rough idea is to choose a coordinate system for $E$, i.e.~an isomorphism $E(\CI)\cong \CI^d$ and express the special functions in these coordinates. On the right-hand side, one chooses an $A\otimes \CI$-basis of $\mot$ inducing an isomorphism $\mot\cong (A\otimes_{\Fq} \CI)^r$ of $A\otimes_{\Fq} \CI$-modules, and expresses the $\tau$-equivariance condition as a $\tau$-difference equation for vectors in $\bT^r$. $\mot$ might not be free as $A\otimes \CI$-module in which case one has to localize first in order to get a free module.
This is the reason why in some $A$-module settings the correspondence above was only obtained over the field of fractions of $\bT$.

In the context of Drinfeld-Hayes modules, Theorem $A$ allows to obtain previously known formulas (for instance Corollary \ref{start}) in a direct fashion, hence allowing to bypass theorems of Drinfeld and Mumford from the theory of shtukas (e.g. \cite[Chap.~6]{goss}). We develop this viewpoint in Section \ref{drinfeld-hayes}. This clarifies several technical issues and simplifies the presentation of Thakur's theory of shtuka functions introduced in \cite{thakur93}.

\paragraph{Relation to the period lattice}
Formula \eqref{eq:pi-to-omega} also hides an isomorphism of $\Fq[t]$-modules 
\begin{equation}\label{eq:lambda-to-sf}
\Lambda_\CC\longrightarrow \fsf(\CC), \quad \tilde{\pi}\longmapsto \omega, 
\end{equation}
where $\Lambda_\CC$ denotes the period lattice $\bF[\theta]\tilde{\pi}$ with $\Fq[t]$-action via
$a(t)\cdot \lambda=a(\theta)\lambda$. The same relation holds for any Drinfeld $\Fq[t]$-module $\CD$ between its period lattice $\Lambda_{\CD}$ and its space of special functions $\fsf(\CD)$ by using
the Anderson generating functions attached to the periods (see \cite[Prop.~6.2 \& Rem.~6.3]{el-guindy}). The second author extended it for general Anderson $t$-modules $E$ (of any rank and dimension) in \cite[Sect.~3.2]{maurischat}. From \cite[Sect.~4]{green}, one deduces that this still holds for particular Drinfeld-Hayes module when $A$ is the coefficient ring of an elliptic curve.
The relation $K\otimes_A \Lambda_E\cong K\otimes_{A}\fsf(E)$, where $K$ is the fraction field of $A$, has also been observed by Angl\`es-Ngo Dac-Tavares Ribeiro for Drinfeld-Hayes modules over general $A$ (see \cite[Sect.~3]{tuan}).

For general $A$, whether or not the space of special functions for an Anderson $A$-module $E$ is isomorphic to the period lattice $\Lambda_E$ solely depends on the ring $A$ (apart from special cases where they are ``accidentally'' isomorphic). More precisely, we show
\begin{TheoremB}[Theorem \ref{lattice}]
Let $u\in A$ such that $K/\Fq(u)$ is a finite separable extension, and let $\fd_{A/\Fq[u]}\subseteq A$ denote the different ideal of the extension. Then for any Anderson $A$-module $E$ there is an isomorphism (depending on the choice of $u$)
\[  \delta_u: \fd_{A/\Fq[u]}\cdot \Lambda_E \longrightarrow \fsf(E). \]
\end{TheoremB}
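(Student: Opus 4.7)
The plan is to construct $\delta_u$ explicitly via Anderson generating functions, and then establish the isomorphism abstractly via the trace duality for the different ideal. Given a period $\lambda \in \Lambda_E$, I will form the universal generating function
\[ f_\lambda := \exp_E\bigl((\partial\phi_u - u\cdot \mathrm{id})^{-1}(\lambda)\bigr) \in E(\bT), \]
which is well-defined because, writing $\theta \in \CI$ for the image of $u$ under the structure morphism, $\partial\phi_u - \theta \cdot \mathrm{id}$ is nilpotent on $\Lie E$ while $u - \theta$ is a unit in $\bT$, so $\partial\phi_u - u \cdot \mathrm{id}$ is invertible on $\Lie E \otimes_\CI \bT$. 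By construction $\phi_u(f_\lambda) = u \cdot f_\lambda$. Using the $A$-linearity of $\exp_E$ on $\bT$-points (the Frobenius twist $\tau_\bT$ fixes $A$), a direct computation gives for every $a \in A$,
\[ \phi_a(f_\lambda) - a \cdot f_\lambda = \exp_E\bigl((\partial\phi_a - a \cdot \mathrm{id})(\partial\phi_u - u \cdot \mathrm{id})^{-1}(\lambda)\bigr). \]
I will then define $\delta_u(c\lambda) := c \cdot f_\lambda$ for $c \in \fd_{A/\Fq[u]}$ and $\lambda \in \Lambda_E$, and check that the images land in $\fsf(E)$ by analysing this obstruction.

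For the isomorphism property, assume first that $E$ is abelian and uniformizable. Theorem A identifies $\fsf(E) \cong \Hom_A(H_B(\mot), A)$. Applying the same theorem to $E$ viewed as an Anderson $\Fq[u]$-module yields the classical AGF identification $\Lambda_E \cong \Hom_{\Fq[u]}(H_B(\mot), \Fq[u])$; the underlying set $H_B(\mot) = \{m \in \mot_\bT : \tau_\mot m = m\}$ does not depend on whether $\mot$ is regarded as an $A$- or $\Fq[u]$-motive, only its module structure does. The trace pairing $\Tr_{K/\Fq(u)}$ supplies the $A$-linear identification $\Hom_{\Fq[u]}(A,\Fq[u]) \cong \fd^{-1}_{A/\Fq[u]}$, and the coinduction adjunction $\Hom_{\Fq[u]}(M,\Fq[u]) \cong \Hom_A(M, \Hom_{\Fq[u]}(A,\Fq[u]))$ combined with projectivity of $H_B(\mot)$ over $A$ gives
\[ \Lambda_E \cong \Hom_A(H_B(\mot), \fd^{-1}_{A/\Fq[u]}) \cong \fd^{-1}_{A/\Fq[u]} \otimes_A \fsf(E). \]
Tensoring by $\fd_{A/\Fq[u]}$ then yields the desired $\fd_{A/\Fq[u]} \cdot \Lambda_E \cong \fsf(E)$. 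For general $E$, I will run the parallel argument using the first part of Theorem A, $\fsf(E) \cong \Hom^\tau_\bT(\mot_\bT,\bT)$, working directly at the level of $\tau$-equivariant $\bT$-linear Homs and flat base change from $\Fq[u]\otimes\CI \hookrightarrow A\otimes\CI$.

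The hard part will be to reconcile the explicit AGF construction of $\delta_u$ with the abstract duality, i.e.~to prove that $c\lambda \mapsto c \cdot f_\lambda$ realizes precisely the same isomorphism produced by the chain of dualities, so that the image is exactly $\fsf(E)$. This demands a careful accounting of two competing $A$-module structures — the intrinsic $\partial\phi$-action on $\Lambda_E$ versus the $\bT$-scalar multiplication on $\fsf(E)$ — through the coinduction adjunction, and pinpointing exactly how the different ideal absorbs the obstruction $(\partial\phi_a - a \cdot \mathrm{id})(\partial\phi_u - u \cdot \mathrm{id})^{-1}(\lambda) \bmod \ker(\exp_E)$ when $\lambda$ is replaced by $c\lambda$ with $c \in \fd_{A/\Fq[u]}$.
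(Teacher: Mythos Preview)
Your explicit definition $\delta_u(c\lambda) := c\cdot f_\lambda$ is not correct: it does not land in $\fsf(E)$. Writing everything inside $A\otimes_{\bF[u]}\Lambda_E$, your $c\cdot f_\lambda$ equals $\tilde{\delta}_u(c\otimes\lambda)$, where $\tilde{\delta}_u(x)=(\id\hat{\otimes}\exp_E)\bigl((1\otimes\partial u - u\otimes 1)^{-1}x\bigr)$. The image $\tilde{\delta}_u(c\otimes\lambda)$ is a special function exactly when $c\otimes\lambda$ lies in the ``diagonal'' submodule $\{x\in A\otimes_{\bF[u]}\Lambda_E:\ (a\otimes 1)x=(1\otimes a)x\ \forall a\}$, and the condition $c\in \fd_{A/\bF[u]}$ does \emph{not} force this: one needs $(ac)\otimes\lambda=c\otimes(a\lambda)$ in $A\otimes_{\bF[u]}\Lambda_E$ for all $a\in A$, which fails already in simple examples. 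The paper's Proposition~\ref{different} and Lemma~\ref{different-module} identify that diagonal submodule with $\fD_{A/\bF[u]}\otimes_A\Lambda_E$, where $\fD_{A/\bF[u]}\subseteq A\otimes_{\bF[u]}A$ is the annihilator of all $(a\otimes 1-1\otimes a)$, and the multiplication map $\fD_{A/\bF[u]}\to\fd_{A/\bF[u]}$ is an isomorphism. The correct $\delta_u$ sends $c\lambda$ to $\tilde{\delta}_u\bigl(\sum_i d_i\otimes a_i\lambda\bigr)=\sum_i d_i\cdot f_{a_i\lambda}$, where $\sum_i d_i\otimes a_i\in\fD_{A/\bF[u]}$ is the unique preimage of $c$ under multiplication. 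So the ``obstruction'' is not absorbed by scaling $\lambda$; it is absorbed by replacing the naive tensor $c\otimes\lambda$ by its diagonal lift.

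That said, your abstract argument via trace duality is a genuinely different route and does yield an abstract isomorphism in the abelian uniformizable case. Viewing $E$ as an $\bF[u]$-module and invoking the known $\bF[u]$-case $\Lambda_E\cong\fsf_{\bF[u]}(E)$ together with Theorem~A, one indeed gets
\[
\Lambda_E\cong\Hom_{\bF[u]}(H_B(\mot),\bF[u])\cong\Hom_A\bigl(H_B(\mot),\Hom_{\bF[u]}(A,\bF[u])\bigr)\cong\Hom_A(H_B(\mot),\fd^{-1}_{A/\bF[u]})\cong\fd^{-1}_{A/\bF[u]}\otimes_A\fsf(E),
\]
hence $\fd_{A/\bF[u]}\cdot\Lambda_E\cong\fsf(E)$. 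The paper instead argues directly: the snake lemma applied to the exact sequence $0\to A\otimes\Lambda_E\to\Lie_E(\bT)\to E(\bT)$ and the automorphism $(1\otimes\partial u - u\otimes 1)$ of $\Lie_E(\bT)$ produces an explicit isomorphism $\tilde{\delta}_u:A\otimes_{\bF[u]}\Lambda_E\to\{h:(u\otimes 1)h=(1\otimes u)h\}$ (with surjectivity checked by hand), and then one restricts to the diagonal via the $\fD$-characterisation of the different. The paper's approach needs no abelian or uniformizable hypothesis, takes the $\bF[u]$-case as part of the same argument rather than as prior input, and --- crucially for the later Theorem~\ref{values} --- produces the explicit formula for $\delta_u$, which your duality chain does not.
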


When $A$ is the coefficient ring of the one dimensional projective space or of an elliptic curve over $\Fq$, the different ideal $\fd_{A/\Fq[u]}$ is principal. The theorem explains why an isomorphism between the lattice and the special functions is obtained in the situations above. It also answers in the negative a question raised in \cite[end of Sect.~3.2]{tuan} whether the space of special functions is always free (see Corollary \ref{special-function-free}).

\paragraph{Gauss-Thakur sums}
Gauss-Thakur sums are the function field analogues of Gauss sums. They were introduced and studied by Thakur in a series of papers \cite{thakur88}, \cite{thakur91}, \cite{thakur91b}, \cite{thakur93}, \cite{thakur} where he established analogues of Stickelberger factorization, Hasse-Daven\-port and Gross-Koblitz theorem.
For a Drinfeld-Hayes module $\CD$, the Gauss-Thakur sums modulo some non-zero prime ideal $\fp\subset A$ are defined to be the sums in $\CI$
\[ g(\chi,\psi)=-\sum_{x\in (A/\fp)^{\times}} \chi(x)^{-1} \psi(x) \]
where $\chi:\bigl(A/\fp\bigr)^\times \to \Fbarq^\times$ is a group morphism and $\psi:A/\fp \to \CD(\CI)=\CI$ is a morphism of $A$-modules (see \cite{thakur91}).
We present a generalization of these Gauss-Thakur sums where the Drinfeld $A$-module of rank $1$ is replaced by an Anderson $A$-module $(E,\phi)$ of arbitrary dimension. As there is no canonical multiplication of $\Fbarq$ on the $\CI$-points $E(\CI)$, our Gauss-Thakur sums will be elements in the tensor product $\Fbarq \otimes_{\Fq} E(\CI)$:
\begin{equation}\label{thakurgausssum}
g(\chi,\psi):=-\sum_{x\in (A/\fp)^{\times}}{\chi(x)^{-1}\otimes \psi(x)}. \nonumber
\end{equation}
for a group morphism $\chi:\bigl(A/\fp\bigr)^\times \to \Fbarq^\times$ and a morphism of $A$-modules $\psi:A/\fp \to E(\CI)$. Since the image of a multiplicative character $\chi:\bigl(A/\fp\bigr)^\times \to \Fbarq^\times$ lies in the residue field $\Fp:= A/\fp$, we prefer to consider the sums inside $\Fp\otimes_{\Fq} E(\CI)$. Fixing $\chi$, we obtain the following properties:
\begin{enumerate}[label=$(\alph*)$]
\item (Lemma \ref{projection-on-gauss-space} \ref{item:c}) For any additive character $\psi$, the sum $g(\chi,\psi)$ satisfies
\begin{equation}\label{introeq}
(\chi(a)\otimes 1)g(\chi,\psi)=(1\otimes \phi_a)g(\chi,\psi) \quad \forall a \in A \nonumber
\end{equation}
where as usual $\chi$ is lifted to a map $\chi:A\to \Fp$ with additional
$\chi(a):=0$ if $a$ is in $\fp$.
\item (Lemma \ref{projection-on-gauss-space} \ref{item:c}) The $\Fp$-vector space 
\[ G(E,\chi):= \{ g\in \Fp \otimes_{\Fq} E(\CI) \mid \forall a \in A: (\chi(a)\otimes 1)g=(1\otimes \phi_a)g \} \]
is generated by Gauss-Thakur sums $\{g(\chi,\psi_i)\}_i$.
\item (Proposition \ref{nonzero} \ref{item:A}) In the case where the lift of $\chi$ to a map $A\to \Fp$ is not a homomorphism of $\Fq$-algebras, then $G(E,\chi)=0$, and in particular all $g(\chi,\psi)$ are zero.
\item (Proposition \ref{nonzero} \ref{item:B}) In the case where the lift of $\chi$ is a homomorphism of $\Fq$-algebras, then $g(\chi,\psi)$ is zero if and only if $\psi$ is zero. Furthermore, a family $\{g(\chi,\psi_i)\}_i$ is linearly independent over $\Fp$ if the family $\{\psi_i\}_i$ is. 
\end{enumerate}
Point $(d)$ generalizes Thakur's non-vanishing result \cite[Thm.~I.(3)]{thakur88}. Using the exponential map $\exp_E$, we can attach Gauss-Thakur sums to periods as follows. Let $\up\in \fp$ be a uniformizer for $\fp$, and choose an element $z_\fp$ in the fractional ideal $\fp^{-1}$ such that $z_\fp\up\equiv 1 \mod \fp$. Then for any period $\lambda\in \Lambda_E$, one has a homomorphism of $A$-modules $\psi_\lambda:A/\fp\to E[\fp]$ given by $\psi_\lambda(a)=\exp_E(a z_\fp\cdot \lambda)$ which does not depend on the choice of $z_\fp$, but only on $\up$. For a fixed character $\chi$, this induces a homomorphism of $A$-modules
\[   g_{\up}:\Lambda_E \longrightarrow G(E,\chi), \quad \lambda \longmapsto g(\chi,\psi_\lambda). \]
In Proposition \ref{uniformizable}, we show that $E$ is uniformizable if, and only if, every Gauss-Thakur sum is of the form $g(\chi,\psi_{\lambda})$ for some $\lambda\in \Lambda_E$, providing a new criterion for uniformizability.

Our main motivation for introducing these Gauss-Thakur sums lies in generalizing a remarkable relation proved by Angl\`es and Pellarin for the Carlitz module (see \cite[Thm.~2.9]{angles}):
\begin{equation}
\omega(\zeta)=\fp'(\zeta)g(\chi_{\zeta},\psi_{\tilde{\pi}}), \nonumber
\end{equation}
where $\fp(t)$ is a monic irreducible polynomial in $\Fq[t]$, $\zeta$ is one of its roots in $\Fbarq$, $\chi_{\zeta}$ is the group morphism given by evaluation at $\zeta$, and -- as a special case of the above notation -- $\psi_{\tilde{\pi}}$ is the morphism
$\psi_{\tilde{\pi}}(a)=\exp_{\CC}(a\fp^{-1}\tilde{\pi})$ for all $a\in A/\fp$. 

For a general Anderson $A$-module $E$, and $\fp$ a maximal ideal of $A$ with uniformizer $\up\in \fp$, we show:
\begin{TheoremC}[Theorem \ref{values}]
For every homomorphism of $\Fq$-algebras $\chi:A\to \Fp$, the following diagram of $A$-modules commutes: \\
\centerline{
\xymatrix@C+10mm{
\fd_{A/\bF[\up]}\cdot \Lambda_E \ar[r]^{\delta_{u_{\fp}}} \ar[dr]_{g_{\up}} &   \fsf(E) \ar[d]^{(\chi \hat{\otimes} \id)} \\
 & G(E,\chi)
}}
where $\delta_{\up}$ is the map of the above theorem for $u=\up$.
\end{TheoremC}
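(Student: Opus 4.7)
Both $(\chi \hat{\otimes} \id) \circ \delta_{\up}$ and $g_{\up}$ are $A$-linear homomorphisms from $\fd_{A/\bF[\up]}\cdot \Lambda_E$ to $\Fp \otimes_{\Fq} E(\CI)$, with image in $G(E,\chi)$. For the composite, this containment uses that $h \in \fsf(E)$ satisfies $\phi_a(h) = a\cdot h$ for all $a\in A$, which passes under $\chi \hat{\otimes} \id$ to the defining relation of $G(E,\chi)$. The plan is to verify the pointwise identity $(\chi \hat{\otimes} \id)(\delta_{\up}(\lambda)) = g_{\up}(\lambda)$ for each $\lambda \in \fd_{A/\bF[\up]}\cdot \Lambda_E$ by exhibiting both sides as $(\id \otimes \exp_E)$ applied to an element of $\Fp \otimes_{\Fq} \Lie(E)_{\CI}$, and then comparing arguments.

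For $g_{\up}(\lambda)$ this is immediate from $\Fq$-linearity and continuity of $\exp_E$:
\[ g_{\up}(\lambda) = (\id \otimes \exp_E)\Bigl(-\sum_{x \in (A/\fp)^{\times}} \chi(x)^{-1} \otimes x z_{\fp} \lambda\Bigr). \]
For the left-hand side, I would use the explicit Anderson generating function-type formula for $\delta_{u}$ established in the proof of Theorem \ref{lattice}: on $\fd_{A/\bF[\up]} \cdot \Lambda_E$, the special function $\delta_{\up}(\lambda)$ is exhibited as $\exp_E$ applied to a convergent series in the Tate algebra obtained by expanding an inverse of the operator $\partial\phi_{\up} - \up\cdot\id$ on $\Lie(E)_{\bT}$. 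The restriction to the different-twisted lattice is precisely what makes this construction well-defined after an appropriate trace, via duality of the pairing $\tr_{K/\bF(\up)}$. Continuity of $\exp_E$ on the Tate algebra then allows one to apply $\chi \hat{\otimes} \id$ termwise and pull $\exp_E$ out; because $\chi(\up) = 0$, the resulting argument is the $\fp$-residue of $\lambda/(\partial\phi_{\up} - \up)$, taken in $\Fp \otimes_{\Fq} \Lie(E)_K$.

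The theorem thereby reduces to an identity in $\Fp \otimes_{\Fq} K$ of the shape
\[ \res_{\fp}\!\left(\frac{\lambda}{\up}\right) \;=\; -\sum_{x \in (A/\fp)^{\times}} \chi(x)^{-1} \otimes x z_{\fp} \lambda, \]
combining two classical ingredients: the different $\fd_{A/\bF[\up]}$ as dualizing ideal for the trace pairing, which is what represents the residue of $\lambda/\up$ at $\fp$ in terms of the element $z_{\fp}$ (the relation $z_{\fp}\up \equiv 1 \pmod{\fp}$ identifies $z_{\fp}$ with the representative of the residue), and character orthogonality on the finite group $(A/\fp)^{\times}$, which assembles the Gauss-type sum. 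The main technical obstacle will be carrying out this reduction for a general $d$-dimensional Anderson $A$-module, where $\partial\phi_{\up}$ is not a scalar but has $\iota(\up)$ as its only eigenvalue, possibly with a nontrivial nilpotent part: the expansion of $(\partial\phi_{\up} - \up)^{-1}$ then acquires nilpotent corrections, but convergence in $\bT$ holds since the scalar part $\iota(\up) - \up$ is invertible by separability of $K/\bF(\up)$, and the residue calculation proceeds component-wise. The one-dimensional Drinfeld-Hayes specialization then recovers Angl\`es--Pellarin's formula \cite[Thm.~2.9]{angles}.
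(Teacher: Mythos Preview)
Your opening move is correct and matches the paper: evaluate $\delta_{\up}(\lambda)$ via the series for $(1\otimes\partial\up-\up\otimes 1)^{-1}$, apply $\chi\hat\otimes\id$ termwise, and use $\chi(\up)=0$ so that only the constant term survives. This gets you to the paper's formula \eqref{calculus}, namely $\sum_i\chi(d_i)\otimes a_i\exp_E(\partial\up^{-1}\lambda_0)$ where $\sum_i d_i\otimes a_i\in\fD_{A/\bF[\up]}$ is the lift of the different generator and $\lambda=f'_s(s)\lambda_0$. But note already that $\delta_{\up}$ is \emph{defined} through this lift to $\fD_{A/\bF[\up]}\subset A\otimes_{\bF[\up]}A$; your proposal speaks of applying the Anderson-generating-function formula directly to $\lambda$ itself, which is not how $\delta_{\up}$ works.

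The genuine gap is the next step. You propose to write both sides as $(\id\otimes\exp_E)$ of something and ``compare arguments''. This cannot work as stated: $\exp_E$ has kernel $\Lambda_E$, and indeed the two arguments are \emph{not} equal in $\Fp\otimes\Lie_E(\CI)$. Worse, the individual summands $a_i\exp_E(\partial\up^{-1}\lambda_0)$ need not lie in $E[\fp]$, so one cannot freely manipulate them as if the $A$-action factored through $A/\fp$. Your displayed ``residue identity'' in $\Fp\otimes K$ is not well-posed (a residue at $\fp$ takes values in a residue field, not in such a tensor product), and character orthogonality on $(A/\fp)^\times$ does not by itself produce the Gauss-sum form from a single element of $G(E,\chi)$.

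The paper handles this with a device you are missing: the projection $\eta_{\chi}$ of Lemma~\ref{projection-on-gauss-space}, which is the identity on $G(E,\chi)$. One first rewrites \eqref{calculus} in terms of an $\bF$-basis $\{c_j(\fP)\}$ of $\Fp$, so that each new coefficient $b_j\exp_E(\partial\up^{-1}\lambda_0)$ is forced into $E[\fp]$. Only then is it legitimate to apply $\eta_{\chi}$, perform the change of variable $y\mapsto x=c_j^{-1}y$ inside the resulting sum over $(A/\fp)^\times$, and collapse everything to a single Gauss--Thakur sum. The final identification uses $\sum_j c_jb_j\equiv f'_s(s)\pmod{\fp}$, which is where the different actually enters---not via trace duality or residues, but via the concrete description $m:\fD_{A/\bF[\up]}\cong\fd_{A/\bF[\up]}$ from Proposition~\ref{different}.
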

In the case of the Carlitz module $\CC$ (see Example \ref{example}), we take $\up=\fp(t)$ in $A=\Fq[t]$ to be the monic generator of the corresponding prime ideal. The different ideal $\fd_{A/\bF[\up]}$ is principal and generated by $\fp'(t)$, and $\delta_{\up}(\fp'(t)\cdot \tilde{\pi})$ is just the Anderson-Thakur function $\omega(t)$ (see Proposition \ref{prop:delta-always-the-same}). By taking the evaluative character $\chi_\zeta$ mapping $t$ to the root $\zeta$ of $\fp$, the commutativity of the diagram recovers Angl\`es and Pellarin's formula.
Whilst in \cite{angles}, it requires the computation of the sign of the Gauss-sum as given in \cite[Thm.~2.3]{thakur},
our theorem follows after a direct calculation. A juxtaposition of the two approaches might result in formulas for the sign of Gauss-Thakur sums. 

\medskip

In the introduction of \cite{angles}, such a relation gave birth to the appellation \textit{Universal Gauss-Thakur sum} for $\omega$, as all non-zero Gauss-Thakur sums can be recovered from values of $\omega$ at algebraic points. The above diagram thus extends the naming to any special function of an Anderson $A$-module. 

\medskip

As an application of our work, we use Green-Papanikolas Pellarin-type identities in \cite{green} to explain how results on Universal Gauss-Thakur sums may be helpful to compute function field special $L$-values. When $A$ is the coefficient ring of an elliptic curve, we prove:
\begin{TheoremD}[Theorem \ref{special-L-values-elliptic}]
Let $E$ be a sign-normalized Drinfeld-Hayes $A$-module over $H$ -- the Hilbert class field of $K$ -- for which the associated Drinfeld divisor is not supported on the unit disk $\operatorname{Spm}\bT$, and let $A^+$ denote the set of sign one elements in $A$. For any non-zero element $\tilde{\pi}_E$ in $\Lambda_E\subset \CI$, and for all non negative integers $n$, we have
\begin{equation}
L(\chi,q^n):=\sum_{a\in A^+}{\frac{\chi(a)}{a^{q^n}}} \in 
H(\chi) \cdot \frac{\tilde{\pi}_E^{q^n}}{g(\chi,\psi_{\tilde{\pi}_E})} \nonumber
\end{equation}
where $\chi:A\to \Fbarq$ is an $\Fq$-algebra homomorphism, and $g(\chi,\psi_{\tilde{\pi}_E})$ is regarded as a tensorless sum in $\CI$.
\end{TheoremD}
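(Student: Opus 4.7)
The plan is to combine the Pellarin-type identity of Green--Papanikolas for elliptic Drinfeld--Hayes modules (see \cite{green}) with Theorem C above. The former expresses the shifted $L$-value $L(\chi,q^n)$ as an $H(\chi)$-multiple of $\tilde{\pi}_E^{q^n}$ divided by the evaluation at the algebraic point determined by $\chi$ of an $n$-th Frobenius twist of the special function $\omega_E \in \fsf(E)$ attached to $\tilde{\pi}_E$ via Theorem B. Theorem C then converts this evaluation into the Gauss-Thakur sum $g(\chi, \psi_{\tilde{\pi}_E})$, up to an algebraic factor living in $H(\chi)$.

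Concretely, I would first set $\fp := \ker(\chi) \subset A$, pick the uniformizer $\up$, and apply Theorem B to define the special function $\omega_E := \delta_{\up}(\delta_0 \cdot \tilde{\pi}_E) \in \fsf(E)$, where $\delta_0 \in H$ is a fixed generator of the principal different ideal $\fd_{A/\Fq[\up]}$ (principal by the discussion following Theorem B, since $A$ is the coefficient ring of an elliptic curve). Next I would invoke the relevant Pellarin-type identity of \cite{green}, which in our setting takes the form
\[ L(\chi, q^n) = c_{\chi,n} \cdot \frac{\tilde{\pi}_E^{q^n}}{(\chi \hat{\otimes} \id)(\omega_E^{(n)})} \]
for some constant $c_{\chi,n} \in H(\chi)$. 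The hypothesis that the Drinfeld divisor of $E$ is not supported on $\operatorname{Spm}\bT$ is critical here: it guarantees that $\omega_E$ is a unit on the unit disk, so that the denominator above is defined and nonzero at the point corresponding to $\chi$.

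Finally, applying Theorem C to $\omega_E$ yields $(\chi \hat{\otimes} \id)(\omega_E) = (\delta_0 \otimes 1) \cdot g(\chi, \psi_{\tilde{\pi}_E})$; because the Frobenius twist commutes with the $\Fq$-algebra homomorphism $\chi$, passing to the $n$-th twist only multiplies by an additional factor in $H(\chi)$. Substituting back into the Pellarin identity and absorbing all algebraic factors into the coefficient, we conclude
\[ L(\chi, q^n) \in H(\chi) \cdot \frac{\tilde{\pi}_E^{q^n}}{g(\chi, \psi_{\tilde{\pi}_E})}, \]
where we interpret $g(\chi, \psi_{\tilde{\pi}_E}) \in \Fp \otimes_{\Fq} \CI$ as a tensorless element of $\CI$ via the inclusion $\Fp = \chi(A) \subset H \subset \CI$, which is legitimate because $\chi$ is an honest $\Fq$-algebra homomorphism.

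The main obstacle will be reconciling conventions: special functions in \cite{green} are normalized through Thakur's shtuka function language rather than via the intrinsic isomorphism $\delta_{\up}$ of Theorem B, and careful bookkeeping is required to track how the various algebraic factors transform under the $n$-th Frobenius twist and to confirm that they indeed remain in $H(\chi)$ rather than in a larger extension. All other steps reduce to direct substitution between the two main ingredients.
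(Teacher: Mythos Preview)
Your approach is essentially the paper's: twist the Green--Papanikolas Pellarin identity, evaluate at the point corresponding to $\chi$, and invoke Theorem~C. Two procedural differences are worth noting. First, the paper defines $\omega_E$ via the separating element $t$ (so $\omega_E=\delta_t(\mu'(t)\tilde\pi_E)$), matching the normalization in \cite{green} exactly, rather than via a uniformizer $\up$ at $\fp$; since $\fsf(E)$ is free of rank one the two choices differ by an element of $A$, which is harmless after evaluation but adds bookkeeping you would have to carry out. Second, rather than tracking $\omega_E^{(n)}(\fP)$ and arguing that the passage to the $n$-th twist contributes a factor in $H(\chi)$, the paper multiplies the twisted $L$-series by the \emph{untwisted} $\omega_E$: using $\omega_E^{(1)}=f_E\omega_E$ one gets
\[
\omega_E\cdot L(A,q^n)=\frac{h_E^{(n)}}{f_E f_E^{(1)}\cdots f_E^{(n)}}\,\tilde\pi_E^{q^n}\in\bT,
\]
so evaluation at $\fP$ is automatically well-defined and the coefficient lands in $\Fp H$, without needing the individual shtuka values $f_E^{(k)}(\fP)$ to be separately defined and nonzero.

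There is one genuine gap. You argue that $\omega_E\in\bT^\times$ forces $(\chi\hat\otimes\id)(\omega_E)$ to be nonzero, but this is nonvanishing in $\Fp\otimes\CI$. The statement of the theorem requires dividing by the \emph{tensorless} Gauss--Thakur sum in $\CI$, and the multiplication map $m:\Fp\otimes\CI\to\CI$ is not injective when $\degp>1$, so a unit in $\Fp\otimes\CI$ can have zero image. The paper closes this by citing Thakur \cite[Sect.~1]{thakur91} for the fact that the tensorless $g(\chi_\fP,\psi_{\tilde\pi_E})\in\CI$ is nonzero; you need to do the same. (Also, minor correction: Theorem~C gives $(\chi\hat\otimes\id)(\omega_E)=g_{\up}(\delta_0\tilde\pi_E)=(\chi(\delta_0)\otimes 1)\cdot g(\chi,\psi_{\tilde\pi_E})$, not $(\delta_0\otimes 1)\cdot g(\chi,\psi_{\tilde\pi_E})$; this does not affect the conclusion since $\chi(\delta_0)\in\Fp^\times$, the different being coprime to $\fp$.)
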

A similar identity can be derived for classical Dirichlet $L$-functions. However, the proof relies on their functional equation which has yet no analogue in function field arithmetic. The high level of similarity between those two formulas is then of a remarkable charm. 

\medskip

The paper is organized as follows. In Section \ref{sec:setting}, we introduce the basic notation used in this article, as well as the general Tate algebra $\bT$ and the ``$\bT$-points'' $E(\bT)$.
The special functions are introduced and investigated in Section \ref{sec:special-functions}, starting with the relation to the Betti realization of the associated motive which was implicitly used in the special cases. The relation to the period lattice, and in particular Theorem \ref{lattice}, is given in Subsection \ref{subsec:relation-to-lattice}, right after we discussed the neccessary properties of the relative different ideal in Subsection \ref{characterisation-of-different}. We end this section by reviewing the case of $A$-modules of rank and dimension $1$, together with Thakur's theory of shtuka functions.
Section \ref{sec:gauss-thakur-sums} is devoted to Gauss-Thakur sums for arbitrary Anderson $A$-modules, and its relation to the values of the special functions at algebraic points is given in Section \ref{sec:values}. In the last section, we apply our results to special values of Goss $L$-functions.

\begin{ack}
Both authors thank Rudy Perkins for a result on the tensor powers of the Carlitz module, that finally does not appear anymore in the paper, but led us on the right track. This work is part of the PhD thesis of the first author under the supervision of Federico Pellarin.
\end{ack}

\section{Setting}\label{sec:setting}

Let $(C,\cO_C)$ be a smooth projective geometrically irreducible curve over a finite field $\bF$ with $q$ elements and characteristic $p$. We fix a closed point $\infty$ on $C$ and consider $A=H^0(C\setminus \{\infty\},\cO_C)$, the ring of rational functions on $C$ that are regular outside $\infty$. This is an algebra over the field~$\bF$. 

Let $K$ be the function field of $C$ (or equivalently, the fraction field of $A$). The degree and residue field of $\infty$ will be denoted by $d_{\infty}$ and $\FI$, respectively, and the associated norm will be $|\cdot|$. Further, let $\CI$ be the completion at a place above $\infty$ of an algebraic closure of $K$, to which we extend $|\cdot|$. We fix $L$ an intermediate field $K\subseteq L\subseteq \CI$, and denote the natural inclusion by $\ell:K\to L$.\\ By convention, every unlabeled tensor will be over $\bF$. 

On $\CI$ (and all its subrings) we let $\tau:\CI\to \CI$ be the $q$-power Frobenius map, and extend it $A$-linearly to $A\otimes \CI$, i.e.~$\tau(a\otimes x)=\left(a\otimes x^q\right)$ on elementary tensors in $A\otimes \CI$. As it is common in the $\bF[t]$-case, we also write $h^{(1)}$ instead of $\tau(h)$ for $h$ in $A\otimes \CI$, and call it the $\textit{Frobenius twist}$ of $h$.

Given a smooth commutative group scheme $E$ over $L$, we let $\Lie_E(L)$ be its Lie algebra, i.e.~its tangent space at the neutral element $e:\Spec L\to E$.
For any group scheme morphism $f:E\to E'$ over $L$, we denote the induced map on the Lie algebras by $\partial f:\Lie_E(L)\to \Lie_{E'}(L)$. 
We recall that an $\bF$-vector space scheme is a commutative group scheme equipped with compatible $\bF$-multiplication. The additive group scheme $\bG_{a,L}$ is naturally an $\bF$-vector space scheme. 

\medskip

Let $d$ be a positive integer. By an \textit{Anderson $A$-module $(E,\phi)$ over $L$ of dimension $d$} we mean the following (see \cite[Def.~5.1]{hartl}): 
\begin{enumerate}[label=(\roman*)]
\item $E$ is an $\bF$-vector space scheme over $L$ which is isomorphic to $\bG_{a,L}^d$, the $d^{th}$-power of the additive group scheme over $L$,
\item $\phi:A\to\End_{\bF,L}(E),a\mapsto \phi_a$ is a homomorphism of $\bF$-algebras into the ring of $\bF$-vector space scheme endomorphisms of $E$ over $L$,
\item for $a\in A$, the induced action $\partial\phi_a:\Lie_E(L)\to \Lie_E(L)$ of $\phi_a\in \End_{\bF,L}(E)$ satisfies that $(\partial \phi_a -\ell(a)\id)\in \End_L(\Lie_E(L))$ is nilpotent.
\end{enumerate}
The Anderson $A$-module $(E,\phi)$ is called \textit{abelian}, if furthermore
\begin{enumerate}[label=(\roman*), resume]
\item $\Hom_{\bF,L}(E,\bG_{a,L})$ is finitely generated as an $A\otimes L$-module where $a\in A$ is acting by composition with $\phi_a$ on the right and $l\in L$ is acting by composition on the left with multiplication by $l$.
\end{enumerate}

The $A\otimes L$-module $\Hom_{\bF,L}(E,\bG_{a,L})$ together with the $\tau$-semilinear action given by composition with the $q$-power Frobenius endomorphism on $\bG_{a,L}$ is called the $A$-motive of $E$, denoted by $\mot(E)$. If it is finitely generated as $A\otimes L$-module, it is even locally free (see \cite[Lemma 1.4.5]{anderson} for $A=\bF[t]$ which implies the general case), and its rank is also called the \textit{rank of $E$} (see \cite[Sect.~5]{hartl}).

As in \cite[Def.~3.1]{hartl}, we define the pullback of $\mot:=\mot(E)$ by $\tau$ as 
\begin{equation}
\tau^*\mot:= (A\otimes L)\otimes_{\tau,A\otimes L}\mot, \nonumber
\end{equation}
where $A\otimes L$ on the left-hand side of the tensor is given an $A\otimes L$-module structure via $\tau$. Then, the $\tau$-semilinear action of the Frobenius on $\mot$ induces an $A\otimes L$-linear map \[ \tau_{\mot}:\tau^*\mot\to \mot \]
 which is injective, but in general \textit{not} surjective.

\medskip

By abuse of notation, we will omit the $\phi$ of the Anderson $A$-module $(E,\phi)$ from now on, and will just write $ae$ instead of $\phi_a(e)$ for $a\in A$ and $e\in E(L)$, as well as $\partial a(x)$ instead of $\partial \phi_a(x)$ for $a\in A$ and $x\in \Lie_E(L)$. By the third condition on Anderson $A$-modules, the endomorphisms $\partial a$ are even automorphisms on $\Lie_E(L)$ for $a\neq 0$, and hence, we can define the automorphisms $\partial r\in \Aut_L(\Lie_E(L))$ for any $0\neq r=\frac{a}{b}\in K$, by $\partial r=(\partial b)^{-1} \partial a$.

For avoiding confusion, scalar multiplication of an element $r$ in $K\subseteq L$ with $x\in \Lie_E(L)$ will \emph{always} be written using the homomorphism $\ell$, i.e. as $\ell(r)x$. 

\medskip

To $E$, one naturally associates an $\bF$-linear map $\exp_E:\Lie_E(\CI)\to E(\CI)$ -- called the \textit{exponential function of} $E$ -- satisfying the properties $(a)$ and $(b)$ below:
\begin{enumerate}[label=(\roman*)]
\item For all $a$ in $A$ and all $x$ in $\Lie_E(\CI)$, $\exp_E((\partial a)x)=a\cdot \exp_E(x)$.
\item For any isomorphism $\kappa:E\stackrel{\sim}{\to} \bG_{a,L}^d$ of $\bF$-vector space schemes over $L$, there exists a sequence $(e_n^{\kappa})_{n\geq 0}$ of $d\times d$ matrices in $\CI$ with $e_0^{\kappa}=1$ for which the diagram 

\centerline{
\xymatrix@C+4mm{
\Lie_E(\CI) \ar[r]^{\exp_E} \ar[d]^{\partial \kappa} & E(\CI) \ar[d]^{\kappa} \\
\CI^d \ar[r]^{\exp_E^{\kappa}} & \CI^d
}}

\noindent in the category of $\bF$-vector spaces commutes, where $\exp_E^{\kappa}$ is computed by the converging series
\begin{equation}
\exp_E^{\kappa}(x)=x+e_1^{\kappa} \tau(x)+e_2^{\kappa} \tau^2(x)+... \quad \forall x\in \CI^d \nonumber
\end{equation}
where $\tau$ is applied coefficient-wise.
\end{enumerate}
We refer to \cite[Sect.~8.6]{boeckle} for a proof of its existence and uniqueness. Its kernel $\Lambda_E$ is usually called the \textit{period lattice of} $E$. 
One calls $E$ \textit{uniformizable} if the exponential map is surjective (see \cite[Def.~5.26]{hartl}).

\bigskip

\paragraph{Gauss completions}

For defining the general Tate-algebra $\bT=A\hat{\otimes}\CI$, and the ``points'' $E(\bT)$, we need certain completions of tensor products.

\begin{Definition}
If $V$ is an $\bF$-vector space given with a non-archimedean norm $|\cdot|$ and $B$ is a countably dimensional $\bF$-algebra, we define a norm on $B\otimes V$ by
\begin{equation}
\|x\| :=\inf \left( \underset{i}{\max} |v_i| \right) \qquad \text{for}~x\in B\otimes V  \nonumber
\end{equation}
where the infimum is taken over all the representations of $x$ of the form $\sum_{i}{(b_i\otimes v_i)}$.
We denote by $B\hat{\otimes}V$ the completion of $B\otimes V$ with respect to this norm.

\end{Definition}

We gather some properties of this construction in the following proposition:
\begin{Proposition}\label{proptate}
Assume that $V$ is complete, and let $t=\{t_n\}_{n\geq 0}$ be any basis of $B$ as an $\bF$-vector space.
\begin{enumerate}[label=$(\arabic*)$]
\item \label{item:t1} For all $x=\sum_{n=0}^{\infty}{t_n\otimes v_n}\in B\otimes V$ (with only finitely many $v_n$ non-zero), one has $\|x\|=\max_i |v_i|$. 
\item \label{item:t2} For any $f$ in $B\hat{\otimes} V$, there exists a unique sequence $(v_n(f))_{n\geq 0}$ of elements in $V$ converging to zero such that the series
\begin{equation}\label{t-expansion}
\sum_{n=0}^{\infty}{t_n\otimes v_n(f)}
\end{equation}
converges to $f$ in $B\hat{\otimes} V$. The norm $\|f\|$ of $f$ is then given by the maximum of the $|v_n(f)|$ ($n\geq 0$). 
\end{enumerate}
\end{Proposition}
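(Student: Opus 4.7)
The plan is to prove (1) first, then bootstrap (2) from it via a routine completion argument. The key structural input is that $(t_n)$ is an $\bF$-basis of $B$, which yields an $\bF$-vector space direct sum $B \otimes V = \bigoplus_n (t_n \otimes V)$; I will also use that in any non-archimedean norm extending the one on $\bF$, every nonzero element of $\bF$ has norm $1$, hence $|\lambda| \le 1$ for all $\lambda \in \bF$.

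For (1), the inequality $\|x\| \le \max_n |v_n|$ is immediate from the definition of the infimum, since the given expression is itself a representation of $x$. For the reverse, take any representation $x = \sum_j (b_j \otimes w_j)$ with finitely many nonzero $w_j$, and expand $b_j = \sum_n \lambda_{j,n} t_n$ with $\lambda_{j,n} \in \bF$. Uniqueness of coordinates relative to the $\bF$-basis $(t_n)$ forces $v_n = \sum_j \lambda_{j,n} w_j$, and the ultrametric inequality combined with $|\lambda_{j,n}| \le 1$ gives $|v_n| \le \max_j |w_j|$ for every $n$. Taking the maximum over $n$ and then the infimum over all representations yields $\max_n |v_n| \le \|x\|$.

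For (2), I would pick a Cauchy sequence $(x^{(k)})_k$ in $B \otimes V$ with $x^{(k)} \to f$, writing $x^{(k)} = \sum_n t_n \otimes v_n^{(k)}$ as a finite $t$-expansion. Applying (1) to the differences $x^{(k)} - x^{(k')}$ turns the Cauchy property of $(x^{(k)})_k$ into the uniform-in-$n$ statement $\sup_n |v_n^{(k)} - v_n^{(k')}| \to 0$ as $k, k' \to \infty$. Setting $v_n(f) := \lim_k v_n^{(k)}$, a standard $\epsilon$-argument then yields simultaneously that $|v_n(f)| \to 0$ (fix $k$ large so that almost all $v_n^{(k)}$ vanish while $|v_n(f) - v_n^{(k)}|$ stays uniformly small) and that the partial sums $\sum_{n<N} t_n \otimes v_n(f)$ converge to $f$ in $B \hat\otimes V$. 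Uniqueness follows directly from (1): if $\sum_n t_n \otimes u_n = 0$ with $u_n \to 0$, then the partial sums have norms $\max_{n<N} |u_n|$ tending to $0$, forcing $u_n = 0$ for all $n$. The norm identity $\|f\| = \max_n |v_n(f)|$ then drops out by continuity of the norm applied to the partial sums, invoking (1) once more. The only point requiring care is the simultaneous extraction of the limits $v_n(f)$ together with the verification that they form a null sequence; this is precisely what the uniform control afforded by (1) provides, so I expect no real obstacle.
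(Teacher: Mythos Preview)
Your proof is correct and follows essentially the same approach as the paper. Part~(1) is argued identically---expand an arbitrary representation in the basis $(t_n)$, compare coordinates, and use that scalars from $\bF$ have norm at most $1$---while for part~(2) the paper phrases the argument more conceptually (part~(1) gives an isometry of $B\otimes V$ with finitely supported $V$-valued sequences under the sup norm, whose completion is the space of null sequences), whereas you carry out that same completion argument explicitly with Cauchy sequences; the content is the same.
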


\begin{proof}
Part \ref{item:t1}:  
Let 
$\|x\|_t$ denote the maximum in norm of the coefficients of $x\in B\otimes V$ written in the basis $\{t_n\otimes 1\}_{n\geq 0}$. We claim that $\|x\|=\|x\|_t$: it is clear that $\|x\|\leq \|x\|_t$ so we prove the converse inequality. For $\varepsilon>0$, let
\begin{equation}
x=\sum_{i=1}^s{(b_i\otimes v_i)} \nonumber
\end{equation}
be such that $\max_i |v_i| \leq \|x\|+\varepsilon$. Denoting $\beta_{ij}$ in $B$ the coefficients of $b_i$ corresponding to $t_j$, we have
\begin{equation}
x=\sum_{j\geq 0}{t_j\otimes \left(\sum_{i=1}^s{\beta_{ij}}v_i\right)}. \nonumber
\end{equation}
In particular, $\|x\|_t=\max_j \left \vert \sum_i{\beta_{ij}}v_i \right \vert \leq \max_i |v_i|\leq \|x\|+\varepsilon$. Since this is true for all $\varepsilon>0$, $\|x\|_t\leq \|x\|$. This proves the first part.\\
Part \ref{item:t2}: By the first part, the assignment $\sum_n t_n\otimes v_n\mapsto (v_n)_{n\geq 0}$ is an isometry between $B\otimes V$ and the space of finite sequences with values in $V$ equipped with the maximum norm. As $V$ is already complete, the completion of this space consists of infinite sequences with values in $V$ which converge to zero. Hence, any element $f$ in $B\hat{\otimes} V$ is of the given form for a unique sequence of elements $(v_n(f))_{n\geq 0}$ in $V$ converging to zero.
\end{proof}

From this explicit description of the completion, one easily sees the following.

\begin{Proposition}\label{prop-completion-exact}
The functor $B\hat{\otimes}-$ from the category of complete normed $\bF$-vector spaces  together with continuous $\bF$-homomorphisms to the category of topological $B$-modules with continuous $B$-homomorphisms is faithful and exact.
\end{Proposition}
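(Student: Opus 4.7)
The plan is to leverage the explicit series description provided by Proposition \ref{proptate}. Fix an $\bF$-basis $t=\{t_n\}_{n\geq 0}$ of $B$ containing $1$ (say $t_0=1$). By part \ref{item:t2} of that proposition, the assignment $f\mapsto (v_n(f))_{n\geq 0}$ identifies $B\hat{\otimes}V$, as a normed $\bF$-vector space, isometrically with the space $c_0(V)$ of sequences in $V$ converging to zero equipped with the supremum norm. Under this identification, a continuous $\bF$-linear map $g:V\to W$ induces on $B\hat\otimes V \to B\hat\otimes W$ the coefficient-wise map $(v_n)_n\mapsto (g(v_n))_n$: this follows from $(B\hat{\otimes}g)(b\otimes v)=b\otimes g(v)$ on elementary tensors, $\bF$-linearity, and continuous extension (using that $|g(v_n)|\leq\|g\|_{\mathrm{op}}|v_n|\to 0$ so the target sequence indeed lies in $c_0(W)$).

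With this reformulation, faithfulness is immediate: if $B\hat{\otimes}g$ is the zero map, then for every $v\in V$ we have $0=(B\hat{\otimes}g)(1\otimes v)=1\otimes g(v)$, whence $g(v)=0$ by the uniqueness of the expansion \eqref{t-expansion}, so $g=0$.

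For exactness, I would consider a short exact sequence $0\to U \xrightarrow{f} V \xrightarrow{h} W \to 0$ of complete normed $\bF$-vector spaces, in which $f$ is a closed embedding with the subspace topology and $h$ is a strict (open) surjection. Exactness at $B\hat{\otimes}U$ and at $B\hat{\otimes}V$ then follows coefficient-wise: a sequence $(v_n)\in c_0(V)$ is sent to $0$ in $c_0(W)$ iff each $v_n$ lies in $\ker h=\im f$, and each preimage $u_n\in U$ is uniquely determined by $v_n$ with $|u_n|$ comparable to $|v_n|$, hence $(u_n)\in c_0(U)$. The critical point is surjectivity of $B\hat{\otimes}h$: given $(w_n)_{n\geq 0}\in c_0(W)$, I must produce a sequence $(v_n)\in c_0(V)$ with $h(v_n)=w_n$. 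Openness of $h$ yields a constant $C>0$ such that every $w\in W$ admits a preimage $v\in V$ with $|v|\leq C|w|$; applying this termwise provides a lift with $|v_n|\leq C|w_n|\to 0$, as required.

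The main obstacle is precisely this lifting step, where one must ensure that the norms of preimages remain controlled so that the lifted sequence converges to zero. This is where some form of the open mapping theorem (or the assumption that the surjection is strict) is indispensable; without such a hypothesis, the naive termwise lift might fail to lie in the completion. Once the bound $|v_n|\leq C|w_n|$ is secured, the entire argument reduces to routine verifications on the coefficients, and the isometric identification $B\hat{\otimes}V\cong c_0(V)$ provided by Proposition \ref{proptate} does all the work.
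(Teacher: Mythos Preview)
Your approach is correct and is exactly what the paper intends: its entire proof is the single sentence ``From this explicit description of the completion, one easily sees the following,'' so you are fleshing out precisely the coefficient-wise argument via the isometric identification $B\hat{\otimes}V\cong c_0(V)$ from Proposition~\ref{proptate}. Your caveat about needing strictness (or an open mapping theorem) for the surjectivity step is a genuine subtlety that the paper glosses over; in the paper's sole application (the sequence $0\to\Lambda_E\to\Lie_E(\CI)\to E(\CI)$ in the proof of Theorem~\ref{lattice}) only left exactness is used and the inclusion of the discrete lattice is automatically strict, so no issue arises there.
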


\begin{Definition}\label{def:tate-algebra}
We define $\bT$ to be $A\hat{\otimes}\CI$ and call it the \textit{general Tate algebra}. It is an \textit{\'espace de Banach $p$-adique} over $\CI$ in the sense of Serre which satisfies Serre's condition $(N)$ (see \cite{serre}). 
By Proposition \ref{proptate}, $\|x^{(1)}\|=\|x\|^q$ for all $x$ in $A\otimes \CI$. The Frobenius twist is thus continuous, and hence extends to a continuous automorphism of $\bT$. We again denote by $f^{(1)}$ or by $\tau(f)$ the image of $f$ in $\bT$ through this automorphism and we still have $\|f^{(1)}\|=\|f\|^q$. 
\end{Definition}

The norm $\|\cdot\|$ on $\bT$ is multiplicative and the proof is similar to the $A=\bF[t]$ case (see \cite[Sect.~2.2]{bosch}). This implies in particular that $\bT$ is an integral domain. 

\begin{Remark}
The general Tate algebra $\bT$ also has a rigid geometric flavour, i.e. it can be described as a ring of sections on the "unit disc" of the rigid analytic space $(C\times_{\bF}\Spec \CI)^{\text{rig}}$. This equivalent construction is discussed in the beginning of \cite[Sect.~1]{boeckle}.
\end{Remark}

\begin{Example}
In the case where $A=\bF[t]$ is a polynomial ring, by Proposition \ref{proptate}, $\bT$ is canonically isomorphic as a $\CI$-algebra to the usual one-dimensional Tate algebra over $\CI$,
\begin{equation}
\CI\cs{t}= \left\{ \sum_{i=0}^{\infty}{c_it^i}\in \CI\ps{t}\,\, \middle|\,\, c_i\in \CI, \, \lim_{i\to \infty} |c_i|=0 \right\}. \nonumber
\end{equation}
\end{Example}

\bigskip

Let now $(E,\phi)$ be an Anderson $A$-module of dimension $d$.
If we choose an isomorphism $E\cong \bG_{a,L}^d$ of $\bF$-vector space schemes, the maximum norm on $\CI^d$ induces a norm on $E(\CI)$. Although, different choices of isomorphisms induce different (non-equivalent) norms, the set of zero sequences is always the same\footnote{The change from one coordinate system to another is given by an element in the group $\GL_d(L\{\tau\})$, where $L\{\tau\}$ denotes the twisted polynomial ring by the $q$-Frobenius $\tau$ on $L$. When $d>1$, then $\GL_d(L\{\tau\})\neq \GL_d(L)$, and
for $g\in \GL_d(L\{\tau\})\setminus \GL_d(L)$, the corresponding $\bF$-endomorphism of $\CI^d$ is continuous, but not Lipschitz continuous.
Therefore, the two norms built in this way are non-equivalent.}. 
Hence, by the explicit description in Prop.~\ref{proptate}, the completion $A\hat{\otimes}E(\bC_{\infty})$ is independent of the chosen isomorphism. As $A\otimes E(\CI)$ is an $A\otimes A$-module -- the left $A$ acting via multiplication on the $A$-part, and the right $A$ acting via $\phi$ on $E(\CI)$ --, and both actions are continuous, also its completion $A\hat{\otimes}E(\bC_{\infty})$ is an $A\otimes A$-module by extending the actions continuously.
By abuse of notation, we denote by $E(\bT)$ this $A\otimes A$-module. 
Similarly, we define the completion $A\hat{\otimes}\Lie_E(\CI)$ and its $A\otimes A$-module structure, and abbreviate $A\hat{\otimes}\Lie_E(\CI)$ by $\Lie_E(\bT)$.

The exponential map $\exp_E$ is continuous with respect to any norm on $E(\CI)$ given by an isomorphism
$E\cong \bG_{a,L}^d$, and the norm on $\Lie_E(\CI)$ given by the induced isomorphism
$\Lie_E(\CI)\cong \CI^d$. Hence by Prop.~\ref{prop-completion-exact}, we obtain an induced map
$\id\hat{\otimes}\exp_E:\Lie_E(\bT)\to E(\bT)$ as the $A$-linear continuous extension of the exponential map.
From this description, it is clear that $\id\hat{\otimes}\exp_E$ is even a homomorphism of $A\otimes A$-modules.

\section{Special functions}\label{sec:special-functions}

Throughout the whole section, let $E$ be an Anderson-$A$-module over $L$ of dimension $d$. We recall from the last section that $E(\bT):=A\hat{\otimes}E(\CI)$ is an $A\otimes A$-module.

\begin{Definition}
A \textit{special function for} $E$ is an element $\omega\in E(\bT)$ on which the two $A$-actions coincide, i.e. 
\begin{equation}
(a\otimes 1)\omega=(1\otimes a)\omega \quad \forall a\in A.\nonumber
\end{equation}
We denote by $\fsf(E)$ the $A$-module of special functions for $E$. 
\end{Definition}
\begin{Remark}
In \cite{maurischat}, $\fsf(E)$ is denoted $H_E$ for the $A=\bF[t]$ case. The space of special functions is meant to generalize Anderson and Thakur's special function. 
For Drinfeld-Hayes modules over general $A$, special functions were defined by Angl\`es, Ngo Dac and Tavares Ribeiro in \cite[Sect.~3]{tuan}. Take care that their definition differs from ours, as the functions are allowed to lie in the field of fractions of $\bT$. 
When, moreover, $A$ is the coefficient ring of an elliptic curve, they have also been studied by Green and Papanikolas in \cite{green}.
\end{Remark}

\subsection{Relation to the $A$-motive} \label{relation-to-motive}
In this section, we prove a connection between the $A$-module of special functions and the $\tau$-equivariant \textit{Tate-dual} of the motive (Theorem \ref{funceq}). In the case where $E$ is abelian and uniformizable, this connection restricts to an isomorphism of $A$-modules from $\fsf(E)$ to the dual of the Betti realization of $\mot(E)$. 

As we fixed $E$, we abbreviate $\mot:=\mot(E)$.
Let $\mot_{\bT}$ be the $\bT$-module $\mot\otimes_{A\otimes L}\bT$. The action of $\tau_{\mot}$ is extended to $\mot_{\bT}$ by $\tau_{\mot}(m\otimes c):=\tau_{\mot}(m)\otimes c^{(1)}$. We denote by $\Hom_{\bT}^\tau(\mot_{\bT},\bT)$ the $A$-module of $\tau$-equivariant $\bT$-linear morphism, i.e.~of those $\bT$-linear homomorphisms $f:\mot_{\bT}\to \bT$ satisfying $f(x)^{(1)}=f(\tau_{\mot}(x))$ for all $x\in \mot_{\bT}$.
When $E$ is abelian and uniformizable, we consider the \textit{motivic Betti realization} $H_B(\mot)$ of $\mot$ (or $E$) to be the $A$-module:
\begin{equation}
\left\{\sum_{i=1}^s{(m_i\otimes c_i)}\in \mot_{\bT}~\vert~\sum_{i=1}^s{(m_i\otimes c_i)}=\sum_{i=1}^s{\left(\tau_{\mot} (m_i)\otimes c_i^{(1)}\right)}\right\}. \nonumber
\end{equation}
Then $\mot$ is \textit{rigid analytically trival}, i.e. the natural map of $\bT$-modules $H_B(\mot)\otimes_A\bT\rightarrow \mot_{\bT}$ is an isomorphism (see \cite[Thm.~5.28]{hartl}). 
\begin{Theorem}\label{funceq}
There is a natural $A$-linear isomorphism 
\begin{equation}\label{eq:first-iso}
\fsf(E)\longrightarrow \Hom_{\bT}^\tau(\mot_{\bT},\bT).
\end{equation}
If $E$ is abelian and uniformizable, this induces an isomorphism of $A$-modules 
\[ \fsf(E)\longrightarrow \Hom_{A}(H_B(\mot),A). \]
\end{Theorem}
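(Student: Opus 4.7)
The plan is to construct the natural map $\Psi \colon \fsf(E) \to \Hom_{\bT}^\tau(\mot_{\bT}, \bT)$ explicitly, verify it is an isomorphism via a coordinate argument, and then deduce the refined statement in the abelian uniformizable case from rigid analytic triviality together with the identification $\bT^\tau = A$.

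\textbf{Construction of $\Psi$.} For each $m \in \mot = \Hom_{\bF,L}(E, \bG_{a,L})$, base change to $\CI$ followed by $A$-linear continuous tensoring yields $\id_A \hat{\otimes} m \colon E(\bT) \to \bG_a(\bT) = \bT$ (via Proposition \ref{prop-completion-exact}). For $\omega \in \fsf(E)$, set $\Psi_0(\omega)(m) := (\id_A \hat{\otimes} m)(\omega)$. Using the special function identity $(a \otimes 1)\omega = (1 \otimes a)\omega$, the assignment $m \mapsto \Psi_0(\omega)(m)$ is $A$-linear for the right $A$-action $m \mapsto m \circ \phi_a$, and $L$-linear for the post-composition action of $L$; hence it is $A \otimes L$-linear. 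By the universal property of $\mot_{\bT} = \mot \otimes_{A \otimes L} \bT$ it extends uniquely to a $\bT$-linear map $\Psi(\omega) \colon \mot_{\bT} \to \bT$. The $\tau$-equivariance $\Psi(\omega)\circ \tau_{\mot} = \tau \circ \Psi(\omega)$ is immediate since $\tau_{\mot}(m) = \tau \circ m$ and the twist on $\bT$ is induced by the $q$-power on $\CI$.

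\textbf{Isomorphism.} Fix an isomorphism $\kappa \colon E \xrightarrow{\sim} \bG_{a,L}^d$ with projections $\kappa_i := \pi_i \circ \kappa$; these form an $L\{\tau\}$-basis of $\mot$, and the induced identification $E(\bT) \cong \bT^d$ sends a point to its tuple of $\kappa_i$-coordinates. Injectivity follows since $\Psi(\omega)(\kappa_i) = (\id_A \hat{\otimes} \kappa_i)(\omega)$ is precisely the $i$-th coordinate of $\omega$. For surjectivity, given $f \in \Hom_{\bT}^\tau(\mot_{\bT}, \bT)$, set $\omega_i := f(\kappa_i)$ and let $\omega \in E(\bT)$ be the element with coordinates $(\omega_1, \ldots, \omega_d)$. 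Write $\kappa \circ \phi_a \circ \kappa^{-1} \in \End_{\bF,L}(\bG_{a,L}^d)$ as a twisted-polynomial matrix $(P_{ij}^{(a)})_{i,j}$ with $P_{ij}^{(a)} \in L\{\tau\}$, so that $\kappa_i \circ \phi_a = \sum_j P_{ij}^{(a)} \cdot \kappa_j$ in $\mot$. Applying $f$ and using its $\bT$-linearity and $\tau$-equivariance, the identity $f((a \otimes 1)\kappa_i) = (a \otimes 1)\omega_i$ rewrites as $(a \otimes 1)\omega_i = \sum_j P_{ij}^{(a)}(\omega_j)$, which is exactly the $i$-th coordinate of $(1 \otimes a)\omega = \phi_a(\omega)$. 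Hence $\omega \in \fsf(E)$ and $\Psi(\omega) = f$.

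\textbf{Second isomorphism.} When $E$ is abelian and uniformizable, rigid analytic triviality (\cite[Thm.~5.28]{hartl}) provides a $\bT$-linear $\tau$-equivariant isomorphism $H_B(\mot) \otimes_A \bT \xrightarrow{\sim} \mot_{\bT}$. By tensor-hom adjunction, $\Hom_{\bT}^\tau(\mot_{\bT}, \bT) \cong \Hom_A(H_B(\mot), \bT^\tau)$. Writing $f = \sum_n t_n \otimes c_n \in \bT$ in the basis expansion of Proposition \ref{proptate}\ref{item:t2}, the equation $f = f^{(1)}$ forces $c_n = c_n^q \in \bF$ for all $n$; since $\bF$ is finite and $c_n \to 0$, only finitely many $c_n$ are nonzero, giving $\bT^\tau = A$. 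Chaining with $\Psi$ yields the desired $\fsf(E) \cong \Hom_A(H_B(\mot), A)$.

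\textbf{Main obstacle.} The most delicate step is the surjectivity computation, where one must reconcile the left $L\{\tau\}$-module structure of $\mot$ (used to decompose $\kappa_i \circ \phi_a$) with the $\bT$-module structure of $\mot_{\bT}$ through the embedding $A \otimes L \hookrightarrow \bT$, and verify that the twisted-polynomial evaluations $P_{ij}^{(a)}(\omega_j)$ in $\bT$ arise correctly from the combination of $\bT$-linearity and $\tau$-equivariance of $f$.
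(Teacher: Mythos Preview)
Your proof is correct and follows essentially the same approach as the paper: both use the evaluation pairing $e\mapsto (m\mapsto m(e))$ and a coordinate system $\kappa$ to verify bijectivity, and the second part is handled identically via rigid analytic triviality and $\bT^\tau=A$. The only organizational difference is that the paper first establishes an ambient isomorphism $E(\bT)\cong \Hom_L^\tau(\mot,\bT)$ of $A\otimes A$-modules (via biduality of the $d$-dimensional space $E(\CI)$, then tensoring with $A$ and completing) and afterwards identifies $\fsf(E)$ with the $A\otimes L$-linear homomorphisms, whereas you restrict to $\fsf(E)$ from the outset and check the special-function condition by hand in the surjectivity step; your ``main obstacle'' is precisely the bookkeeping that the paper's ambient isomorphism absorbs.
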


\begin{proof}
The idea of the proof for the first part is the same as in \cite[Thm.~3.9]{maurischat}. The natural homomorphism of $\bF$-vector spaces
\begin{equation}\label{eq:iso-E-to-M-dual}
     E(\CI) \longrightarrow \Hom_{L}^{\tau}(\mot, \CI), \quad e\mapsto
\left\{ \mu_e: m\mapsto m(e) \right\}
\end{equation}
is an isomorphism, since after a choice of a $\bF$-vector space scheme morphism $\kappa:E\stackrel{\sim}{=}\bG_{a,L}^d$ over $L$ -- which induces a coordinate system $E(\CI)\cong \CI^d$ -- the latter is isomorphic to the bidual vector space 
$$E(\CI)^{\vee\vee}
=\Hom_{\CI}(\Hom_{\CI}(E(\CI),\CI),\CI)$$ of $E(\CI)$.
The homomorphism \eqref{eq:iso-E-to-M-dual} is even compatible with the $A$-action on $E$ via $\phi$ and the $A$-action on 
$\mu\in \Hom_{L}^{\tau}(\mot, \CI)$ via $(a\cdot \mu)(m)=
\mu(m\circ \phi_a)$ for all $m\in \mot$, $a\in A$.
By tensoring with $A$, we obtain an isomorphims of $A\otimes A$-modules
\[ A\otimes E(\CI) \longrightarrow A\otimes \Hom_{L}^{\tau}(\mot, \CI)
=\Hom_{L}^{\tau}(\mot, A\otimes \CI) \]
Furthermore, a sequence $(e_n)_{n\geq 0}$ of elements in $E(\CI)$ tends to zero if and only if for every $m\in \mot$ the sequence $(m(e_n))_{n\geq 0}$ tends to zero (which is easily seen after a choice of a coordinate system). Hence by taking completions, we obtain an isomorphism of $A\otimes A$-modules
\[ E(\bT) \longrightarrow \Hom_{L}^{\tau}(\mot, \bT). \]
Finally, the image of $\fsf(E)\subset E(\bT)$ consists exactly of those homomorphisms $\mu:\mot\to \bT$ for which $(a\cdot \mu)(m)=(a\otimes 1)\cdot \mu(m)$ for all $
m\in \mot$, $a\in A$. As $(a\cdot \mu)(m)=\mu(m\circ \phi_a)$, these are exactly those homomorphisms which are also $A$-linear, i.e.~the $A\otimes L$-linear ones. By scalar extension of these homomorphisms, we obtain the desired isomorphism
\[  \fsf(E)\longrightarrow \Hom_{A\otimes L}^\tau(\mot,\bT) \cong
\Hom_{\bT}^\tau(\mot_{\bT},\bT). \]
If $E$ is abelian and uniformizable, $\mot$ is rigid analytically trivial. Therefore, 
we have an additional chain of isomorphisms:
\begin{eqnarray*}
\Hom_{\bT}^\tau(\mot_{\bT},\bT) &\stackrel{\cong}{\longrightarrow} & \Hom_{\bT}^\tau(H_B(\mot)\otimes_A\bT ,\bT) \\
&\stackrel{\cong}{\longleftarrow} & \Hom_A^\tau( H_B(\mot),\bT) \\
&\stackrel{\cong}{\longrightarrow} & \Hom_{A}(H_B(\mot),A).
\end{eqnarray*}
Here, the second isomorphism is given by $\bT$-linear extension of homomorphisms, and 
the last isomorphism comes from the fact that $\tau$ acts trivially on $H_B(\mot)$, and hence, the image of a $\tau$-equivariant homomorphism lies in the $\tau$-invariants 
$\{ x\in \bT\mid x^{(1)}=x\}=A$.
\end{proof}

\begin{Remark}\label{rem:explicit-systems-of-equations}
The transition from the isomorphism \eqref{eq:first-iso} to descriptions as in Equation \eqref{eq:drinfeld-solution-tau-solution} is obtained for abelian $A$-modules as follows.
On one hand, one chooses a coordinate system for $E$, i.e.~an isomorphism $\kappa=(\kappa_1,\ldots, \kappa_d):E\to \bG_a^d$, and writes the different $A$-actions as actions on $\bG_a^d(\bT)=\bT^d$. On the other hand, as $\mot$ might not be free over $A\otimes L$, we denote by $Q$ the field of fractions of $A\otimes L$, and choose a basis $m_1,\ldots, m_r$ of $\mot\otimes_{A\otimes L} Q$ as $Q$-vector space (with $m_j\in \mot$), as well as a corresponding dual basis of \[ \Hom_{A\otimes L}^\tau(\mot,\bT)\otimes_{A\otimes L} Q=\Hom_{Q\bT}(\mot_{Q\bT},Q\bT) \]
where $Q\bT$ is the compositum of $Q$ and $\bT$ in the field of fractions of $\bT$.
Then the condition for a homomorphism in $\Hom_{Q\bT}(\mot_{Q\bT},Q\bT)$ to be $\tau$-equivariant is expressed as a $\tau$-difference equation in the coordinates with respect to that basis.

As the coordinate functions $\kappa_1,\ldots, \kappa_d$ for $E$ can be seen as elements in $\mot$ and even provide a $\CI\{\tau\}$-basis of $\mot$, the transition from the solutions of the $A$-action equation to the solutions of the $\tau$-difference equation and back is easily obtained by expressing the $\kappa_i$'s as $Q$-linear combinations of the $m_j$'s, or expressing the $m_j$'s as a $\CI\{\tau\}$-linear combination of the $\kappa_i$'s (cf.~\cite[Sect.~5]{maurischat} for more details in the $\bF[t]$-case).

In calculations in special situations, this isomorphism has been used implicitly,
e.g.~in \cite[\S 2.5]{andersonthakur}, \cite[Sect.~3.2]{tuan}, or \cite[Sect.~5]{green2}.
The situation for Drinfeld-Hayes modules will be explained in more details in Proposition~\ref{mem}.
\end{Remark}

\subsection{A characterization of the relative different}\label{characterisation-of-different}
Let $u$ be an element in $A$ such that the field extension $\bF(u)\subset K$ is finite and separable. In the next section, we will obtain an $A$-module isomorphism from $\fd_{A/\bF[u]}\otimes_A \Lambda_E$ to $\fsf(E)$ where $\fd_{A/\bF[u]}\subseteq A$ is the relative different of the ring extension $\bF[u]\subset A$. To explain the appearance of this ideal, we need a characterization that we now describe (Proposition \ref{different}). As the characterization holds in great generality, we switch only in this subsection to more general notation.

\medskip

Let $\cO$ be a Dedekind domain with fraction field $F$, and let $K$ be a finite separable extension of $F$. Also, we let $A$ be the integral closure of $\cO$ in $K$. To be consistent with \cite[Sect.~III.2]{neukirch}, we shall assume that the residue field extensions of $\cO\subset A$ are separable. We recall that the relative different $\fd_{A/\cO}$ is the ideal of $A$ given by the inverse of the fractional ideal $\{x\in K~|~\forall y\in A:~\tr_{K/F}(xy)\in \cO\}$. However, we will need two other characterizations of this ideal. One characterization is
\[ \fd_{A/\cO} = \{ x\in A \mid \forall y \in A : x\cdot \textrm{d}y=0\in \Omega_{A/\cO}^1\ \}, \]
where $\Omega_{A/\cO}^1$ is the $A$-module of K\"ahler differentials (see \cite[Prop.~2.7]{neukirch}). If $I$ is the kernel of the multiplication map $A\otimes_{\cO} A\xrightarrow{m} A$, 
one has $\Omega_{A/\cO}^1=I/I^2$ and the universal differential $\textrm{d}:A\to \Omega_{A/\cO}^1$ corresponds to $y\mapsto (y\otimes 1-1\otimes y) \mod{I^2}$. The second characterization of the different ideal (\cite[Thm.~III.2.5]{neukirch}) is that $\fd_{A/\cO}$ is generated by all different element $f'(s)$ where $s\in A$ generates the field extension $K/F$, and $f(X)\in \cO[X]$ is the minimal polynomial of $s$ over~$\cO$.

We define
\[ \fD_{A/\cO}:=\{  x\in A\otimes_{\cO} A \mid \forall a\in A: (a\otimes 1)x=(1\otimes a)x \}. \]
We shall prove:
\begin{Proposition}\label{different}
The multiplication map $m:A\otimes_{\cO} A\to A$ restricts to an isomorphism of $A$-modules $\fD_{A/\cO}\cong \fd_{A/\cO}$.
\end{Proposition}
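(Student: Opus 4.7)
The key reformulation I would use is $\fD_{A/\cO}=\mathrm{Ann}_{A\otimes_\cO A}(I)$, where $I=\ker m$; note that $m$ becomes $A$-linear on $\fD_{A/\cO}$, since the two $A$-actions on elements of $\fD_{A/\cO}$ agree by definition. I would split the proof into three steps: (a) the image of $m|_{\fD_{A/\cO}}$ lies in $\fd_{A/\cO}$; (b) injectivity; (c) the image equals $\fd_{A/\cO}$.

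For (a), I would invoke the cotangent characterization $\fd_{A/\cO}=\{x\in A:\ x\cdot\mathrm{d}y=0\ \text{in}\ \Omega^1_{A/\cO},\ \forall y\in A\}$ together with $\Omega^1_{A/\cO}=I/I^2$. Writing $x=\sum a_i\otimes b_i\in\fD_{A/\cO}$, a direct calculation gives $m(x)\otimes 1-x=\sum a_i(b_i\otimes 1-1\otimes b_i)\in I$. Combined with $Ix=0$, this yields $(m(x)\otimes 1)(y\otimes 1-1\otimes y)\in I^2$ for every $y\in A$, so $m(x)\cdot\mathrm{d}y=0$ in $\Omega^1_{A/\cO}$, whence $m(x)\in\fd_{A/\cO}$. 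For (b), if $m(x)=0$ then $x\in I\cap\mathrm{Ann}(I)$, so $x^2=0$. Since $A$ is $\cO$-flat (torsion-free over a Dedekind domain), $A\otimes_\cO A$ embeds into the finite étale—hence reduced—$K$-algebra $K\otimes_F K$, forcing $x=0$.

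Step (c) is where the real work lies. Since both $m(\fD_{A/\cO})$ and $\fd_{A/\cO}$ are ideals of $A$, equality may be tested after completion at each prime $\fq$ of $\cO$. The formation of $\fd_{A/\cO}$ commutes with the flat base change $\cO\to\widehat{\cO_\fq}$ classically, and that of $\fD_{A/\cO}$ does as well: use $\fD_{A/\cO}=\Hom_{A\otimes A}(A,A\otimes A)$ and the finite presentation of $A$ over $A\otimes A$ (a consequence of $A$ being $\cO$-finite). Writing $\widehat{A_\fq}=\prod_i\widehat{A_{\fp_i}}$ for the primes $\fp_i$ of $A$ above $\fq$, orthogonal idempotents force $\fD$ and $\fd$ to split as products, reducing to a single local complete extension. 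By separability of residue extensions (part of the standing assumption), each such local extension is monogenic: $\widehat{A_{\fp_i}}=\widehat{\cO_\fq}[s]$ with minimal polynomial $f\in\widehat{\cO_\fq}[X]$. In this monogenic case $A\otimes_\cO A=\cO[X,Y]/(f(X),f(Y))$ and $I=(X-Y)$; setting $g(X,Y):=(f(X)-f(Y))/(X-Y)\in\cO[X,Y]$, I would show by an elementary polynomial manipulation (analyzing when $(X-Y)h\in(f(X),f(Y))$ in $\cO[X,Y]$, specializing $Y=X$, and rearranging) that $\fD=(g)$. Then $m(g)=f'(s)$ generates $\fd_{A/\cO}$, concluding surjectivity.

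The main obstacle I anticipate is in (c): verifying that $\fD_{A/\cO}$ is compatible with flat base change and splits correctly under the product decomposition of the completion, so that the problem indeed reduces to the monogenic case. Once in that case, the polynomial calculation is routine.
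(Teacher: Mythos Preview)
Your steps (a) and (b) coincide with the paper's: both use the K\"ahler differential description of $\fd_{A/\cO}$ for the image containment, and both deduce injectivity from $x^2=0$ together with the reducedness of $A\otimes_{\cO}A$ (which the paper states directly as a characterization of separability, while you obtain it by embedding into the \'etale $K$-algebra $K\otimes_F K$).

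The real divergence is in step (c). You pass to completions at primes of $\cO$, invoke base-change compatibility of $\fD_{A/\cO}=\Hom_{A\otimes A}(A,A\otimes A)$, split along idempotents, and then in each monogenic local factor compute $\fD=(g)$ with $g(X,Y)=(f(X)-f(Y))/(X-Y)$. The paper avoids all of this by observing that $K/F$, being a finite separable \emph{field} extension, already admits a global primitive element $s\in A$. Even though $A$ need not equal $\cO[s]$, the element $g(1\otimes s,\,s\otimes 1)\in A\otimes_{\cO}A$ still makes sense, is annihilated by $1\otimes s-s\otimes 1$ (hence lies in $\fD_{A/\cO}$ by a one-line lemma reducing the defining condition to the single generator $s$), and maps to $f'(s)$ under $m$. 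Since $\fd_{A/\cO}$ is generated by all such $f'(s)$ as $s$ ranges over primitive elements, surjectivity follows immediately. Your localization route is correct and your concern about base-change compatibility is legitimate (and resolvable as you indicate), but it is unnecessary: the global primitive element lets one write down preimages directly, making the argument both shorter and free of the completeness and product-decomposition bookkeeping.
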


\begin{proof}
We proceed in three steps. \\
1. \textbf{The restriction of the multiplication map $m$ to $\fD_{A/\cO}$ is injective.} \\
The kernel $I$ of $m$ is generated by the elements $(1\otimes a-a\otimes 1)$ for $a\in A$. If $x\in \fD_{A/\cO}$ is in the kernel, then $x\cdot x=0$ by definition of $\fD_{A/\cO}$. However, since $F\subset K$ is separable, the ring $A\otimes_{\cO} A$ does not have nilpotent elements (this is one characterization of separability, see \cite[beginning of Sect.~26]{matsumura}). Hence, $x=0$.
\\
2. \textbf{The image of $\fD_{A/\cO}$ via $m$ is in $\fd_{A/\cO}$.} \\
The following diagram commutes, for all $y$ in $A$, \\
\centerline{
\xymatrix@C+5mm{
 A\otimes_{\cO} A \ar@{->>}[d]^{m} \ar[r]^(.55){y\otimes 1-1\otimes y} & I \ar[d]^(.45){\textrm{mod} I^2}  \\
   A\ar[r]^(.35){\textrm{d}y} &  \Omega_{A/\cO}^1\cong I/I^2 
}}
thanks to the following calculation ($a$ and $b$ are in $A$):
\begin{eqnarray*}
 ab\cdot \textrm{d}y &=& (ab\otimes 1)(y\otimes 1-1\otimes y) +I^2
 \\ &=& (a\otimes b)(y\otimes 1-1\otimes y) +a(b\otimes 1-1\otimes b)(y\otimes 1-1\otimes y) +I^2 \\ &=&
 (a\otimes b)(y\otimes 1-1\otimes y) +I^2
 \end{eqnarray*}
 By the first characterization of the different, the image of $\fD_{A/\cO}$ via $m$ is in $\fd_{A/\cO}$.
\\
3. \textbf{The restriction $m:\fD_{A/\cO}\to \fd_{A/\cO}$ is surjective.} \\
Let $s$ in $A$ be such that $K=F(s)$. Let $f(X)=X^l+f_{l-1}X^{l-1}+...+f_0$ be the minimal polynomial of $s$ in $\cO[X]$. By the second characterization of the different, it suffices to show that $f'(s)$ is in the image of $\fD_{A/\cO}$ via $m$. The element
\begin{equation}
\sum_{j=0}^{l-1} s^j\otimes  \left( \sum_{k=0}^{l-1-j} f_{l-k}s^{l-1-j-k} \right) \nonumber
\end{equation}
of $A\otimes_{\cO}A$ is mapped to $f'(s)$ under $m$. To conclude, it is enough to prove that it belongs to $\fD_{A/\cO}$. This follows after a simple calculation using the characterization of $\fD_{A/\cO}$ in Lemma \ref{only-consider-s}, and the description of this element in the following remark.
\end{proof}

\begin{Remark}\label{rem:preimage-of-f'(s)}
The element $\sum_{j=0}^{l-1} s^j\otimes 
\left( \sum_{k=0}^{l-1-j} f_{l-k}s^{l-1-j-k} \right)$ can be described much nicer, namely as the evaluation of $\frac{f(X)-f(Y)}{X-Y}\in \cO[X,Y]$ at $X=1\otimes s\in A\otimes_{\cO} A$ and $Y=s\otimes 1\in A\otimes_{\cO} A$.
\end{Remark}

\begin{Lemma}\label{only-consider-s}
Let $s\in A$ be a generator for the field extension $K/F$ 
and let $N$ be a torsion-free $A$-module. Then,
\begin{equation}
\{x\in A\otimes_{\cO}N~|~\forall a\in A:~(a\otimes 1)x=(1\otimes a)x\}=\{x\in A\otimes_{\cO}N~|~(s\otimes 1)x=(1\otimes s)x\}. \nonumber
\end{equation}
In particular, an element $x\in A\otimes_{\cO} A$ lies in $\fD_{A/\cO}$ if and only if
$(1\otimes s-s\otimes 1)x=0$.
\end{Lemma}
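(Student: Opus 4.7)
The inclusion $\supseteq$ requires work; the reverse inclusion is immediate from taking $a=s$. So the plan is to take an $x\in A\otimes_{\cO} N$ satisfying $(s\otimes 1)x=(1\otimes s)x$ and promote this single relation, valid for $s$, into the full family of relations, valid for every $a\in A$. The key observation is that since $K=F(s)$, every element of $A\subset K$ can be written as $p(s)/q$ for some polynomial $p\in \cO[X]$ and some $0\neq q\in \cO$ (first express $a$ as a polynomial in $s$ with coefficients in $F$, then clear denominators).

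With such an expression $qa=p(s)$ at hand, I would first observe by iteration that the hypothesis $(s\otimes 1)x=(1\otimes s)x$ propagates to $(s^n\otimes 1)x=(1\otimes s^n)x$ for all $n\geq 0$. Taking $\cO$-linear combinations of these equalities (and using that coefficients from $\cO$ can pass through the tensor) yields $(p(s)\otimes 1)x=(1\otimes p(s))x$ for every $p\in \cO[X]$. Applying this to the $p$ chosen above, and using that $q\in \cO$ commutes with the tensor product structure, I obtain
\[ q\bigl((a\otimes 1)x-(1\otimes a)x\bigr)=(qa\otimes 1)x-(1\otimes qa)x=(p(s)\otimes 1)x-(1\otimes p(s))x=0. \]

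The main obstacle (and the reason the hypothesis ``$N$ torsion-free'' appears) is then to cancel the $q$ on the left, i.e.\ to argue that multiplication by $q$ is injective on $A\otimes_{\cO} N$. For this, I would argue as follows: since $N$ is a torsion-free $A$-module and $\cO\subseteq A$, any nonzero element of $\cO$ acts injectively on $N$. Moreover, $A$ is an $\cO$-torsion-free module over the Dedekind domain $\cO$, hence $A$ is $\cO$-flat. Flatness of $A$ applied to the injection $q\colon N\hookrightarrow N$ gives that $q$ still acts injectively on $A\otimes_{\cO} N$. This yields $(a\otimes 1)x=(1\otimes a)x$ and completes the non-trivial inclusion.

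The ``In particular'' statement is then immediate: the $A$-module $N=A$ is torsion-free over itself, so $\fD_{A/\cO}$ is exactly the set of $x\in A\otimes_{\cO} A$ killed by the single element $1\otimes s-s\otimes 1$.
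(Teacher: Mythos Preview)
Your proof is correct and follows essentially the same approach as the paper: extend the relation from $s$ to all of $\cO[s]$, write an arbitrary $a\in A$ as a fraction with numerator in $\cO[s]$, and cancel the denominator using torsion-freeness of $N$. The only cosmetic difference is that the paper allows the denominator in $\cO[s]$ rather than just $\cO$, and is terser about the cancellation step (where you spell out the flatness argument).
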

\begin{proof}
One inclusion is clear, so we prove the opposite one. For $x\in A\otimes_{\cO}N$, we first notice that the given condition implies 
$(b\otimes 1)x=(1\otimes b)x$ for all $b\in \cO[s]$. For arbitrary $a\in A$,
we write $a=b/c$ with $b$ and $c$ in $\cO[s]\subset A$. Then,
\begin{align*}
(c\otimes 1)(a\otimes 1)x &= (b\otimes 1)x =(1\otimes b)x =(1\otimes ac) x \\
&= (1\otimes a)(1\otimes c)x = (1\otimes a)(c\otimes 1)x=(c\otimes 1)(1\otimes a)x.
\end{align*}
The result follows by dividing out $(c\otimes 1)$ as $N$ is torsion-free.
\end{proof}

We end this section by a Lemma that will be needed afterwards.

\begin{Lemma}\label{different-module}
Let $N$ be a torsion-free $A$-module. The multiplication map $A\otimes_{\cO}N \to N$ induces an isomorphism of $A$-modules
\begin{equation}
\left\{x\in A\otimes_{\cO}N~\vert~\forall a\in A:~(a\otimes 1)x=(1\otimes a)x\right\}\cong \fd_{A/\cO}\cdot N. \label{eq:DN-dN}
\end{equation}
\end{Lemma}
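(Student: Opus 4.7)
The plan is to deduce the lemma from Proposition~\ref{different} by tensoring with $N$ over $A$ and exploiting the fact that $N$ is a flat $A$-module. First, I would observe that $A$ is itself a Dedekind domain (standard for the integral closure of a Dedekind domain in a finite separable extension), and recall that over a Dedekind domain every torsion-free module is flat; this is seen by localizing at each prime and checking that torsion-free modules over a DVR are flat. Hence $N$ is a flat $A$-module.

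Next, I would use the canonical identification $A\otimes_{\cO}N \cong (A\otimes_{\cO}A)\otimes_A N$, where the tensor product over $A$ uses the right $A$-action on $A\otimes_{\cO}A$ (multiplication on the second factor). Under this identification, multiplication by $(s\otimes 1 - 1\otimes s)$ on $A\otimes_{\cO}N$ corresponds to multiplication by $(s\otimes 1 - 1\otimes s)$ on the first factor, tensored with $\id_N$. By Lemma~\ref{only-consider-s} applied with $N=A$, the kernel of this endomorphism of $A\otimes_{\cO}A$ is exactly $\fD_{A/\cO}$, so we have an exact sequence
\[ 0 \longrightarrow \fD_{A/\cO} \longrightarrow A\otimes_{\cO}A \xrightarrow{\,(s\otimes 1 - 1\otimes s)\cdot\,} A\otimes_{\cO}A. \]
Tensoring with $N$ over $A$ preserves exactness by flatness, and combining with Lemma~\ref{only-consider-s} applied to $N$, the kernel computed on $A\otimes_{\cO}N$ equals the left-hand side of~\eqref{eq:DN-dN}. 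This yields an isomorphism $\fD_{A/\cO}\otimes_A N\xrightarrow{\sim}\{x\in A\otimes_{\cO}N\mid (a\otimes 1)x=(1\otimes a)x\ \forall a\in A\}$.

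Finally, by Proposition~\ref{different} the multiplication map restricts to an isomorphism $\fD_{A/\cO}\xrightarrow{\sim}\fd_{A/\cO}$, so tensoring with $N$ gives $\fD_{A/\cO}\otimes_A N\cong \fd_{A/\cO}\otimes_A N$. Using flatness of $N$ once more, the inclusion $\fd_{A/\cO}\hookrightarrow A$ induces an embedding $\fd_{A/\cO}\otimes_A N\hookrightarrow A\otimes_A N = N$ whose image is $\fd_{A/\cO}\cdot N$. A short diagram chase then shows that the composite of these identifications agrees with the restriction to the left-hand side of~\eqref{eq:DN-dN} of the multiplication map $A\otimes_{\cO}N\to N$, completing the proof.

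The main obstacle I foresee is bookkeeping the two distinct $A$-module structures on $A\otimes_{\cO}A$ and ensuring that flatness of $N$ is invoked with respect to the correct one; once this is pinned down, everything follows directly from Proposition~\ref{different} and standard tensor product formalism.
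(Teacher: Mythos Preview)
Your proposal is correct and follows essentially the same route as the paper: both arguments use Lemma~\ref{only-consider-s} to exhibit $\fD_{A/\cO}$ as the kernel of multiplication by $1\otimes s - s\otimes 1$ on $A\otimes_{\cO}A$, tensor the resulting exact sequence with the flat $A$-module $N$, and then invoke Proposition~\ref{different} to pass from $\fD_{A/\cO}\otimes_A N$ to $\fd_{A/\cO}\cdot N$. Your additional remarks about tracking the correct $A$-action and checking compatibility with the multiplication map are well placed but do not go beyond what the paper implicitly uses.
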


\begin{proof} 
Let $s\in A$ be a generator for the field extension $K/F$. By Lemma \ref{only-consider-s}, $\fD_{A/\cO}$ is the kernel of the map
$\varepsilon:A\otimes_{\cO} A\to A\otimes_{\cO} A$ given by multiplication with $1\otimes s-s\otimes 1$. Hence,
\[ 0\to \fD_{A/\cO}\longrightarrow A\otimes_{\cO} A \xrightarrow{\varepsilon} A\otimes_{\cO} A \]
is exact. As $N$ is torsion-free and $A$ is a Dedekind domain, $N$ is a flat $A$-module. Therefore, tensoring with $N$ over $A$ is exact, and 
hence, $\fD_{A/\cO}\otimes_A N=\ker(\varepsilon)\otimes_A N\cong \ker(\varepsilon\otimes \id_N)$. By Lemma \ref{only-consider-s}, $\ker(\varepsilon\otimes \id_N)$ equals the left hand side of the isomorphism \eqref{eq:DN-dN}. On the other hand, using Proposition \ref{different}, we see that $\fD_{A/\cO}\otimes_A N$ is also isomorphic to $\fd_{A/\cO}\otimes_A N\cong \fd_{A/\cO}\cdot N$.
\end{proof}

\subsection{Relation to the period lattice}\label{subsec:relation-to-lattice}
\begin{Definition}
An element $u$ in $A$ for which the field extension $K/\bF(u)$ is finite and separable will be called \textit{separating}.\footnote{Namely, in this case the set $\{u\}$ is a separating transcendence basis of $K$.} $\bF[u]\subset A$ is then a finite separable integral extension of $\bF$-algebras which meets the conditions of the extension $A/\cO$ in the previous subsection. 
\end{Definition}

\begin{Example}
A uniformizer $u_{\fp}$ of any non-zero prime ideal $\fp$ in $A$  is a separating element. Indeed, the closed point $\fp$ of $(C,\cO_{C})$ is unramified under the map $C\rightarrow \bP^1_{\bF}$ corresponding to the inclusion $\bF(u_{\fp})\subset K$. As the inseparability degree divides all ramification indices, the extension $\bF(u_{\fp})\subset K$ is separable.
\end{Example}

Let $u$ be a separating element in $A$. In this section, we establish a natural isomorphism of $A$-modules between $\fd_{A/\bF[u]}\cdot \Lambda_E$ and $\fsf(E)$ (see Theorem \ref{lattice}) -- which, however, depends on $u$. The surprising consequence of this fact is that the period lattice and the module of special functions might not be isomorphic.

\begin{Theorem}\label{lattice}
Let $u$ be a separating element in $A$. The sequence of $A$-module homomorphisms
\[ \Lambda_E \longleftarrow A\otimes_{\bF[u]}\Lambda_E \stackrel{\tilde{\delta}_u}{\longrightarrow} \left\{\omega\in E(\bT) \mid (u\otimes 1)\omega=(1\otimes u)\omega \right\},\]
where the first arrow is the multiplication map and the second one takes an element $x$ of $A\otimes_{\bF[u]}\Lambda_E$ to $\tilde{\delta}_u(x)=(\id\hat{\otimes}\exp_E)\left((1 \otimes \partial u-u\otimes 1)^{-1}x\right)$, induces an isomorphism of $A$-modules 
\[  \delta_u:\fd_{A/\bF[u]}\cdot \Lambda_E \stackrel{\cong}{\longrightarrow} \fsf(E). \]
\end{Theorem}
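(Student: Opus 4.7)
The plan is to construct $\tilde{\delta}_u$ by lifting from $A\otimes_{\bF[u]}\Lambda_E$ to $A\otimes_\bF\Lambda_E$, apply Lemma~\ref{different-module} with $\cO=\bF[u]$ and $N=\Lambda_E$ to identify the ``diagonal'' subspace $\{x\in A\otimes_{\bF[u]}\Lambda_E \mid \forall a\in A,\ (a\otimes 1)x=(1\otimes a)x\}$ with $\fd_{A/\bF[u]}\cdot\Lambda_E$, and then exhibit an explicit inverse via the exponential map.

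First I will establish the invertibility of the operator $S:=1\otimes\partial u - u\otimes 1$ on $\Lie_E(\bT)$. The Anderson axiom gives $\partial u=\ell(u)\id+N$ with $N\in\End_L(\Lie_E(L))$ nilpotent, so $S=(\ell(u)-u)\id+N$. The element $\ell(u)-u\in\bT$ is a unit because $|\ell(u)|>1=\|u\|_{\bT}$ forces the geometric expansion $\ell(u)^{-1}\sum_{n\geq 0}(u/\ell(u))^n$ to converge in $\bT$; hence $S^{-1}$ exists as the finite Neumann sum $\sum_{k\geq 0}(-1)^k(\ell(u)-u)^{-(k+1)}N^k$. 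For well-definedness of $\tilde{\delta}_u(x)$, I note that the kernel of the projection $A\otimes_\bF\Lambda_E\twoheadrightarrow A\otimes_{\bF[u]}\Lambda_E$ is generated by $(u\otimes 1-1\otimes\partial u)(a\otimes\lambda)=-S(a\otimes\lambda)$; thus two lifts of $x$ differ by $Sw$ for some $w\in A\otimes\Lambda_E$, and $S^{-1}(Sw)=w$ belongs to $A\otimes\Lambda_E\subseteq\ker(\id\hat{\otimes}\exp_E)$. Here the kernel equals $A\hat{\otimes}\Lambda_E$ by exactness of $A\hat{\otimes}-$ (Proposition~\ref{prop-completion-exact}) applied to $0\to\Lambda_E\to\Lie_E(\CI)\to\im(\exp_E)\to 0$, and further equals $A\otimes\Lambda_E$ by Proposition~\ref{proptate}, since $\Lambda_E$ is discrete in $\Lie_E(\CI)$ (so any null sequence in $\Lambda_E$ is eventually zero).

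Using $\exp_E\circ\partial a=a\cdot\exp_E$, a direct computation shows that for every $a\in A$,
\[((a\otimes 1)-(1\otimes a))\,\tilde{\delta}_u(x)=(\id\hat{\otimes}\exp_E)\bigl(S^{-1}((a\otimes 1)-(1\otimes\partial a))\,\tilde{x}\bigr).\]
For $a=u$ the inner element is $-S\tilde{x}$, so the right-hand side reduces to $(\id\hat{\otimes}\exp_E)(-\tilde{x})=0$; this places $\tilde{\delta}_u(x)\in\{\omega\mid(u\otimes 1)\omega=(1\otimes u)\omega\}$. If $x$ also satisfies the full diagonal condition, then for every $a\in A$ the element $((a\otimes 1)-(1\otimes\partial a))\tilde{x}$ lies in the kernel of the projection, hence equals $Sw$ for some $w\in A\otimes\Lambda_E$; the right-hand side then collapses to $(\id\hat{\otimes}\exp_E)(w)=0$, so $\tilde{\delta}_u(x)\in\fsf(E)$. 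Combined with Lemma~\ref{different-module} applied to $\cO=\bF[u]$ and $N=\Lambda_E$, this yields the $A$-module homomorphism $\delta_u$.

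For bijectivity I will construct an explicit inverse. Given $\omega\in\fsf(E)$, I choose a preimage $z\in\Lie_E(\bT)$ under $\id\hat{\otimes}\exp_E$; this is possible when $E$ is uniformizable since $\id\hat{\otimes}\exp_E$ is then surjective by Proposition~\ref{prop-completion-exact}, and the non-uniformizable case follows after replacing $E(\CI)$ by $\im(\exp_E)$ throughout. The condition $\omega\in\fsf(E)$ translates, via the exponential relation, to $((a\otimes 1)-(1\otimes\partial a))z\in\ker(\id\hat{\otimes}\exp_E)=A\otimes\Lambda_E$ for every $a\in A$; specializing to $a=u$ yields $\tilde{x}:=Sz\in A\otimes\Lambda_E$. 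The resulting class $x\in A\otimes_{\bF[u]}\Lambda_E$ is independent of $z$ (alternative choices differ by elements of $A\otimes\Lambda_E$, which $S$ sends into the kernel of the projection), and the same translated identity shows that $x$ satisfies the diagonal condition. The identity $S^{-1}S=\id$ on $\Lie_E(\bT)$ then makes it immediate that the two constructions are mutually inverse. The main obstacle is precisely the identification of $\ker(\id\hat{\otimes}\exp_E)$ with $A\otimes\Lambda_E$ rather than only $A\hat{\otimes}\Lambda_E$: this collapse of the completed tensor via discreteness of the period lattice is what makes the inverse land in the algebraic tensor, and hence in $\fd_{A/\bF[u]}\cdot\Lambda_E$ via Lemma~\ref{different-module}.
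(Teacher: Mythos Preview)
Your construction of $\tilde{\delta}_u$, the verification that it lands in $H=\{\omega\mid (u\otimes 1)\omega=(1\otimes u)\omega\}$, and the reduction of the diagonal subspace to $\fd_{A/\bF[u]}\cdot\Lambda_E$ via Lemma~\ref{different-module} are all correct and agree with the paper's argument (which packages the same computation into a snake-lemma diagram). The gap is in your surjectivity step.

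Your inverse starts by lifting $\omega\in\fsf(E)$ to some $z\in\Lie_E(\bT)$ under $\id\hat{\otimes}\exp_E$. In the non-uniformizable case your fix ``replace $E(\CI)$ by $\im(\exp_E)$ throughout'' does not prove the theorem: it would only show that $\delta_u$ surjects onto $\fsf(E)\cap\bigl(A\hat{\otimes}\im(\exp_E)\bigr)$, and you have not shown that every special function lies in that image. This is exactly the nontrivial point. The paper handles it without any uniformizability hypothesis by exploiting the $H$-condition itself: writing $h\in H$ in a basis $\{t_ju^n\}$ as $\sum_{j,n} t_ju^n\otimes e_{j,n}$, the equation $(u\otimes 1)h=(1\otimes u)h$ forces $u\cdot e_{j,n}=e_{j,n-1}$. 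Since $e_{j,n}\to 0$, for $n$ large one invokes the local invertibility of $\exp_E$ near $0$ (\cite[Lem.~5.3]{hartl}) to get small $L_{j,n}$ with $\exp_E(L_{j,n})=e_{j,n}$; then \emph{all} coefficients satisfy $e_{j,k}=u^{n-k}e_{j,n}=\exp_E(\partial u^{n-k}L_{j,n})\in\im(\exp_E)$, and one assembles the preimage $\lambda=\sum_j t_j\otimes \partial u^{n+1}L_{j,n}\in A\otimes_{\bF[u]}\Lambda_E$. In other words, the divisibility relations encoded in $H$ are what force the coefficients into $\im(\exp_E)$; Proposition~\ref{prop-completion-exact} alone does not give you this, and your argument needs this step (or an equivalent) to close.
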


\begin{proof}
We first explain why $\tilde{\delta}_u$ is well-defined, and that it even is an isomorphism. This is merely the same as in \cite[Sect.~3.2]{maurischat}.

The exact sequence of $A$-modules (via $\partial \phi$ and $\phi$ resp.)
\[0 \longrightarrow \Lambda_E \longrightarrow \Lie_E(\CI) \xrightarrow{\exp_E} E(\CI)\]
induces a sequence of $A\otimes A$-modules
\[ 0\longrightarrow A\otimes \Lambda_E \longrightarrow \Lie_E(\bT) \xrightarrow{\id\hat{\otimes}\exp_E} E(\bT), \]
by tensoring with $A$ and taking completions. Here, we take into account that $\Lambda_E$ is discrete, and hence $A\hat{\otimes}\Lambda_E=A\otimes \Lambda_E$. By Proposition \ref{prop-completion-exact} this sequence is exact again.

Now, let $u$ be the separating element in $A$ from the statement of the theorem.
The map $\Lie_E(\bT)\to \Lie_E(\bT),x\mapsto (1\otimes \partial u-u\otimes 1)(x)$ is $\bT$-linear and its determinant is a power of $(1\otimes \ell(u)-u\otimes 1)\in \bT$. Since $|\ell(u)|>1$, the latter is even invertible in $\bT$ with inverse
\[ (1\otimes \ell(u)-u\otimes 1)^{-1}=\sum_{i\geq 0} u^i\otimes \ell(u)^{-i-1}\in \bT, \]
and hence the homomorphism $1\otimes \partial u-u\otimes 1$ is an isomorphism. Therefore, as in \cite[Sect.~3.2]{maurischat}, one obtains a commuting diagram of $A\otimes A$-modules with exact rows:\\ 
\centerline{
\xymatrix@C+10mm{
 & 0 \ar[r] \ar@{-->}[d]  &  A\otimes\Lambda_E  \ar[d]  \ar[r]^{1\otimes \partial u-u\otimes 1} &  A\otimes\Lambda_E  \ar[d] 
    \ar@{-->}'[r] `r[d]`[d]+/d 3ex/ `^d[lll]+/l 3ex/ `[dddll]      [dddll]
 & \\
 0 \ar[r] & 0\ar[d] \ar[r] &  \Lie_E(\bT)  \ar[d]^{\id\hat{\otimes}\exp_E} \ar[r]^{1\otimes \partial u-u\otimes 1} &  
 \Lie_E(\bT)\ar[d]^{\id\hat{\otimes}\exp_E}\ar[r]  & 0 \\
 0 \ar[r] & H \ar@{=}[d] \ar[r]  & E(\bT) \ar[r]^{1\otimes u-u\otimes 1} & E(\bT) &\\
 & H &&&
}}
where $H$ is defined to be 
\[ H:= \ker(1\otimes u-u\otimes 1)=\left\{\omega\in E(\bT) \mid (u\otimes 1)\omega=(1\otimes u)\omega \right\}.\] 
The snake lemma then induces the dashed arrow, and hence an injective $A\otimes A$-homomorphism
$\tilde{\delta}_u:A\otimes_{\bF[u]} \Lambda_E \to H$. By diagram chasing, we see that for $x\in A\otimes \Lambda_E$, $\tilde{\delta}_u(x)$ is given by
\begin{equation}
\tilde{\delta}_u(x)=(\id\hat{\otimes}\exp_E)\left((1 \otimes \partial u-u\otimes 1)^{-1}x\right). \nonumber
\end{equation}
The same proof as in \cite[Thm.~3.6]{maurischat} shows that $\tilde{\delta}_u$ is also surjective. The main ideas are as follows: First choose $t_1,\ldots, t_k\in A$ such that their residues modulo $u$ generate the finite dimensional $\bF$-vector space $A/(u)$, and extend this to an $\bF$-basis of $A$ by taking all $u$-power multiples of these elements. After the choice of an $\bF$-vector space scheme isomorphism $\kappa$ over $L$ from $E$ to $\bG_{a,L}^d$, we define a norm $|\cdot|$ on $E(\CI)\cong \CI^d$ and on $\Lie_E(\CI)\cong \CI^d$. By Proposition \ref{proptate}, we write accordingly an arbitrary $h\in E(\bT)$ as $h=\sum_{n\geq 0} \sum_{j=1}^k t_ju^n\otimes e_{j,n}$ where $e_{j,n}\in E(\CI)$. If further $h\in H$, the $e_{j,0}$ are $u$-torsion elements, and $u(e_{j,n})=e_{j,n-1}$ for all $n\geq 1$, $j\in\{1,\ldots , k\}$. Furthermore, as $(e_{j,n})_n$ tends to $0$ with respect to $|\cdot|$, \cite[Lem.~5.3]{hartl} shows that for large enough $n$, there exists a unique $L_{j,n}\in \Lie_E(\CI)$ such that $\exp_E(L_{j,n})=e_{j,n}$ and $|L_{j,n}|=|e_{j,n}|$. It allows one to define 
\[  \lambda:=\sum_{j=1}^k t_j\otimes \partial u^{n+1} L_{j,n}. \]
A calculation shows that a) $\lambda$ is independent of the chosen $n$, as long as $n$ is large enough, that b) $\lambda$ is indeed in $A\otimes_{\bF[u]} \Lambda_E$, and that c) 
$\tilde{\delta}_u(\lambda)=h$.

The isomorphism $\tilde{\delta}_u$ then restricts to an isomorphism of $(A\otimes A)$-modules\footnote{Actually, both $A$-actions coincide by definition, and we can just consider them as $A$-modules via either of the two actions.}
\[ \fD_{A/\bF[u]}\otimes_A \Lambda_E= \left\{x\in A\otimes_{\bF[u]} \Lambda_E~ \vert ~ \forall a\in A:~ (a\otimes 1)x=(1\otimes a)x \right\} \longrightarrow \fsf(E). \]
By Lemma \ref{different-module}, the left-hand-side is isomorphic to $\fd_{A/\bF[u]}\cdot \Lambda_E$ via the multiplication map $A\otimes_{\bF[u]} \Lambda_E \to \Lambda_E$. This leads to the desired isomorphism
 \begin{equation} \delta_u:\fd_{A/\bF[u]}\cdot \Lambda_E\cong \fD_{A/\bF[u]}\otimes_A \Lambda_E \stackrel{\tilde{\delta}_u}{\longrightarrow} \fsf(E).\nonumber  \qedhere \end{equation}
\end{proof}

We immediately obtain the following corollary.
\begin{Corollary}\label{cor:iso-lattice-sf}
If there exists a separating element $u$ in $A$ for which $A=\bF[u,s]$ for some $s$ in $A$, then we have an isomorphism of $A$-modules $\Lambda_E \cong \fsf(E)$.
\end{Corollary}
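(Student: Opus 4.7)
The plan is to combine Theorem~\ref{lattice} with the observation that, under the stated hypothesis, the relative different ideal $\fd_{A/\bF[u]}$ is a principal ideal of $A$. Since multiplication by a generator of this ideal will turn out to be an $A$-linear isomorphism of $\Lambda_E$ onto $\fd_{A/\bF[u]}\cdot \Lambda_E$, the theorem gives the claim.

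First, I would invoke Theorem~\ref{lattice} with the separating element $u$ to obtain the $A$-module isomorphism $\delta_u : \fd_{A/\bF[u]}\cdot\Lambda_E \stackrel{\cong}{\to} \fsf(E)$. The goal is then reduced to the $A$-module isomorphism $\Lambda_E\cong \fd_{A/\bF[u]}\cdot\Lambda_E$.

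Next, I would show that $\fd_{A/\bF[u]}$ is a principal ideal of $A$. Since $A = \bF[u,s]=\bF[u][s]$ and $K/\bF(u)$ is finite separable, the element $s$ generates the field extension $K/\bF(u)$, and its minimal polynomial $f(X)\in \bF[u][X]$ (which has coefficients in $\bF[u]$ because $s$ is integral over $\bF[u]$) is separable. Because $A$ is monogenic over $\bF[u]$ with generator $s$, the standard formula for the different of a monogenic ring extension (see \cite[Ch.~III, Prop.~2.4]{neukirch}) yields $\fd_{A/\bF[u]} = (f'(s))$. In particular, $\fd_{A/\bF[u]}$ is principal, generated by the nonzero element $\alpha := f'(s)\in A$.

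Finally, I would check that multiplication by $\alpha$ induces an $A$-linear isomorphism $\Lambda_E\stackrel{\cong}{\to} \alpha\,\Lambda_E = \fd_{A/\bF[u]}\cdot\Lambda_E$. Surjectivity is tautological, so only injectivity needs comment. The period lattice $\Lambda_E$ sits inside $\Lie_E(\CI)$, where the $A$-action is through $\partial\phi$. By axiom (iii) of an Anderson $A$-module, $\partial\phi_a$ is an $L$-linear automorphism of $\Lie_E(L)$ for every $0\neq a\in A$ (as recalled after the definition in the Setting), hence also of $\Lie_E(\CI)$, so its restriction to $\Lambda_E$ is injective. Thus $\Lambda_E$ is torsion-free as an $A$-module, and multiplication by $\alpha$ is injective. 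Composing the two isomorphisms yields $\Lambda_E\cong \fsf(E)$.

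There is no genuinely hard step here; everything is an assembly of earlier results. The only point that requires a small input beyond Theorem~\ref{lattice} is the classical fact that the different of a monogenic ring extension is generated by the derivative of the minimal polynomial of the generator, which is precisely the second characterization of the different invoked in the proof of Proposition~\ref{different}.
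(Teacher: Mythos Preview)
Your proof is correct and follows essentially the same approach as the paper: both invoke Theorem~\ref{lattice}, use \cite[Prop.~III.2.4]{neukirch} to see that $\fd_{A/\bF[u]}=(f'(s))$ is principal when $A=\bF[u,s]$, and then compose $\delta_u$ with multiplication by $f'(s)$. You simply spell out the torsion-freeness of $\Lambda_E$ (to justify that multiplication by $f'(s)$ is injective) where the paper leaves this implicit.
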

\begin{proof}
If $A=\bF[u,s]$, $\fd_{A/\bF[u]}$ is principal and generated by $f'(s)$ where $f(X)$ is the minimal polynomial of $s$ over $\bF[u]$ (see \cite[Prop.~III.2.4]{neukirch}). Therefore, it suffices to compose the isomorphism $\delta_u$ of Theorem \ref{lattice} with multiplication by $f'(s)$.
\end{proof}

The isomorphism given in Theorem \ref{lattice} obviously depends on $u$, and it seems that all the induced isomorphisms $\Lambda_E\cong \fsf(E)$ in Corollary \ref{cor:iso-lattice-sf} even depend on $u$ and $s$. The following proposition, however, shows that in the case where $A=\bF[t]$ is the polynomial ring, all the isomorphisms coincide.

\begin{Proposition}\label{prop:delta-always-the-same}
Assume that $A=\bF[t]$ is a polynomial ring, and let $u(t)\in \bF[t]$ be separating. Then the composition
\[   \Lambda_E \stackrel{u'(t)\cdot }{\longrightarrow} \fd_{A/\bF[u]}\cdot \Lambda_E \stackrel{\delta_u}{\longrightarrow} \fsf(E) \]
is independent of the chosen element $u(t)$. In particular, it equals the isomorphism
\[ \delta_t:\Lambda_E \to \fsf(E) \]
sending a period $\lambda\in \Lambda_E$ to its Anderson generating function
\[   (\id\hat{\otimes}\exp_E)\left( (1\otimes \partial t - t\otimes 1)^{-1} \lambda\right)
= \sum_{n=0}^\infty t^n\otimes \exp_E\left((\partial t)^{-n-1}\lambda \right). \]
\end{Proposition}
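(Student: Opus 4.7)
The plan is to check that $\delta_u(u'(t)\cdot \lambda) = \delta_t(\lambda)$ for every $\lambda \in \Lambda_E$, and then to unfold $\delta_t(\lambda)$ as the claimed series. Everything will hinge on the polynomial factorisation
\begin{equation*}
u(X) - u(Y) = \Phi_u(X,Y)\cdot (X-Y), \qquad \Phi_u(X,Y) := \frac{u(X) - u(Y)}{X - Y} \in \bF[X,Y],
\end{equation*}
which, after specialisation at $(X,Y)=(1\otimes t, t\otimes 1)$, will relate $1\otimes \partial u - u\otimes 1$ and $1\otimes \partial t - t\otimes 1$ in a manner matched exactly by the preimage of $u'(t)\cdot \lambda$ produced by Lemma~\ref{different-module}.

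First I would identify the preimage of $u'(t)$ in $\fD_{A/\bF[u]}$. Taking $s=t$ as generator of $K/\bF(u)$, its monic minimal polynomial over $\bF[u]$ is $f(X)=u_n^{-1}(u(X)-u)$, where $u_n$ is the leading coefficient of $u$; so $f'(t)=u_n^{-1}u'(t)$. By Remark~\ref{rem:preimage-of-f'(s)}, the preimage of $f'(t)$ is the evaluation of $(f(X)-f(Y))/(X-Y)$ at $(1\otimes t, t\otimes 1)$, and clearing $u_n^{-1}$ gives $\Phi_u(1\otimes t, t\otimes 1)\in \fD_{A/\bF[u]}$ as the preimage of $u'(t)$. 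Hence by Lemma~\ref{different-module}, the preimage of $u'(t)\cdot \lambda$ in $\fD_{A/\bF[u]}\otimes_A \Lambda_E$ is represented, inside $A\otimes\Lambda_E\subset \Lie_E(\bT)$, by $\Phi_u(1\otimes t, t\otimes 1)\cdot (1\otimes \lambda)$, where $1\otimes t$ acts as $1\otimes \partial t$ on the right factor.

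The central step is then the operator identity on $\Lie_E(\bT)$ coming from $\partial u = u(\partial t)$ together with the polynomial factorisation applied to the commuting operators $1\otimes \partial t$ and $t\otimes 1$:
\begin{equation*}
1\otimes \partial u - u\otimes 1 \;=\; u(1\otimes \partial t) - u(t\otimes 1) \;=\; \Phi_u(1\otimes \partial t, t\otimes 1)\cdot (1\otimes \partial t - t\otimes 1).
\end{equation*}
Since $1\otimes t$ and $1\otimes \partial t$ act identically on the relevant tensor, inversion yields $(1\otimes \partial u - u\otimes 1)^{-1}\cdot \Phi_u(1\otimes t, t\otimes 1) = (1\otimes \partial t - t\otimes 1)^{-1}$, and applying this to $1\otimes \lambda$ followed by $\id\hat\otimes \exp_E$ gives
\begin{equation*}
\delta_u(u'(t)\cdot\lambda) \;=\; (\id\hat\otimes\exp_E)\bigl((1\otimes \partial t - t\otimes 1)^{-1}(1\otimes \lambda)\bigr) \;=\; \delta_t(\lambda),
\end{equation*}
which is manifestly independent of $u$. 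The only real difficulty here is bookkeeping between the two guises of ``$t$'' (as multiplication on a tensor factor and as $\partial t$); no ring-theoretic subtlety remains.

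Finally, for the Anderson-generating-function formula, Anderson condition~(iii) makes $\partial t = \ell(t)\id + N$ with $N$ nilpotent and $|\ell(t)|=|t|>1$, so $(\partial t)^{-n-1}\lambda \to 0$ and $\sum_{n\ge 0} t^n\otimes (\partial t)^{-n-1}\lambda$ converges in $\Lie_E(\bT)$. A direct telescoping check shows that $(1\otimes \partial t - t\otimes 1)$ maps this series to $1\otimes \lambda$. Applying the continuous extension $\id\hat\otimes\exp_E$ termwise then yields
\begin{equation*}
\delta_t(\lambda) \;=\; \sum_{n=0}^\infty t^n\otimes \exp_E\bigl((\partial t)^{-n-1}\lambda\bigr),
\end{equation*}
completing the plan.
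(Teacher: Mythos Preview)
Your proof is correct and follows essentially the same route as the paper: both identify the preimage of $u'(t)$ in $\fD_{A/\bF[u]}$ as $U=\Phi_u(1\otimes t,\,t\otimes 1)$ via Remark~\ref{rem:preimage-of-f'(s)}, and then use the factorisation $1\otimes\partial u - u\otimes 1 = U\cdot(1\otimes\partial t - t\otimes 1)$ of commuting operators on $\Lie_E(\bT)$ to cancel the $u$-dependence and reduce $\delta_u(u'(t)\lambda)$ to $\delta_t(\lambda)$. Your additional care with the leading coefficient $u_n$ and the explicit telescoping verification of the series expansion are harmless elaborations not spelled out in the paper.
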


\begin{proof}
As mentioned before, the different ideal $\fd_{\bF[t]/\bF[u]}$ is generated by $u'(t)$, and the preimage of $u'(t)$  in $\fD_{\bF[t]/\bF[u]}\subseteq \bF[t] \otimes_{\bF[u]} \bF[t]$ 
is $U:=\frac{u(X)-u(Y)}{X-Y}|_{X=1\otimes t, Y=t\otimes 1}$ by Remark \ref{rem:preimage-of-f'(s)}.
Therefore, by the definition of $\delta_u$, we explicitly obtain for any $\lambda\in \Lambda_E$
\begin{eqnarray*} \delta_u(u'(t)\cdot \lambda) &=& U\cdot (\id\hat{\otimes}\exp_E)\left( (1\otimes \partial u - u\otimes 1)^{-1} \lambda\right) \\
&=& (\id\hat{\otimes}\exp_E)\left( (1\otimes \partial u - u\otimes 1)\cdot (1\otimes \partial t - t\otimes 1)^{-1}\cdot (1\otimes \partial u - u\otimes 1)^{-1} \lambda\right) \\
&=&  \delta_t(\lambda). \qedhere
\end{eqnarray*}
\end{proof}

\begin{Remark}\label{rmk:different-canonical}
It is well-known that the class of the different ideal $[\fd_{A/\bF[u]}]$ in the class group of $A$ is equal to $-[\Omega_{A/\bF}^1]$. This is an immediate application of the Riemann-Hurwitz theorem to the finite and separable morphism of curves $C\to \bP^1_{\bF}$ given by the choice of $u$ (see \cite[Prop.~IV.2.3]{hartshorne}). The Riemann-Hurwitz theorem does not, however, give the $A$-module isomorphism from $\Omega_{A/\bF}^1$ to $\fd_{A/\bF[u]}^{-1}$. In this remark, we provide an alternative reason why they are isomorphic. By \cite[Prop.~2.7]{neukirch}, we have
\[ \fd_{A/\bF[u]} = \{ x\in A \mid \forall y\in A: x\cdot \textrm{d}y=0\in \Omega_{A/\bF[u]}^1 \}. \]
Besides, as $\bF[u]\subset A$ is separable, one has an exact sequence of $A$-modules:
\begin{equation}
0\longrightarrow \Omega^1_{\bF[u]/\bF}\otimes_{\bF[u]} A\longrightarrow \Omega^1_{A/\bF}\longrightarrow \Omega_{A/\bF[u]}^1\longrightarrow 0. \nonumber
\end{equation}
In particular, we can rewrite $\fd_{A/\bF[u]}$ as:
\[ \fd_{A/\bF[u]} = \{ x\in A \mid \forall y\in A: x\cdot \textrm{d}y\in Adu\}. \]
Thanks to this description, the equality $\fd_{A/\bF[u]}\cdot \Omega_{A/\bF}^1=Adu$ is clear.

This explains why the different ideal is not principal in general. Indeed, we then have
\begin{equation}
\deg \fd_{A/\bF[u]}\equiv -\deg \Omega_{A/\bF}^1\equiv 2-2g \pmod{d_{\infty}} \nonumber
\end{equation}
where $g$ is the genus of $C$. Hence, the order of $[\fd_{A/\bF[u]}]$ is divisible by $d_{\infty}/\operatorname{gcd}(2g-2,d_{\infty})$.
\end{Remark}

In \cite[end of Sect.~3.2]{tuan}, it is asked whether the space of special functions is always free as an $A$-module. The question is raised in rank and dimension $1$, but we are able to answer it in
the negative for any rank $r\geq 1$:
\begin{Corollary}\label{special-function-free}
Let $\Lambda$ be an $A$-lattice in $\CI$ projective of rank $r\geq 1$ and let $\CD$ be the associated Drinfeld $A$-module over $\CI$ (according to \cite[Section~4.3]{goss}). Then, $\fsf(\CD)$ is free as an $A$-module if, and only if, $\det_A(\Lambda)$ and $(\Omega_{A/\bF}^1)^{\otimes r}$ are isomorphic as $A$-modules.
\end{Corollary}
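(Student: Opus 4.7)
The plan is to reduce the statement to a Picard-group computation, combining Theorem~\ref{lattice} and Remark~\ref{rmk:different-canonical}. First I would pick any separating element $u\in A$ (for instance a uniformizer of a non-zero prime, by the example following the definition of separating) and apply Theorem~\ref{lattice} to obtain an $A$-module isomorphism $\fsf(\CD)\cong \fd_{A/\bF[u]}\cdot \Lambda$. Since $\Lambda$ is torsion-free, hence flat, over the Dedekind domain $A$, and $\fd_{A/\bF[u]}$ is an invertible fractional ideal of $A$, the canonical surjection $\fd_{A/\bF[u]}\otimes_A \Lambda \twoheadrightarrow \fd_{A/\bF[u]}\cdot \Lambda$ is an isomorphism. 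Thus $\fsf(\CD)$ is isomorphic to $\fd_{A/\bF[u]}\otimes_A \Lambda$ as a finitely generated projective $A$-module of rank~$r$.

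Next I would exploit the standard fact that over a Dedekind domain a finitely generated projective module of rank $r$ is free if and only if its determinant is trivial. The usual determinant formula for tensor products of projectives (together with $\det_A(I)=I$ for an invertible ideal $I$) yields
\[
\det\nolimits_A\bigl(\fd_{A/\bF[u]}\otimes_A \Lambda\bigr) \;\cong\; \fd_{A/\bF[u]}^{\otimes r}\otimes_A \det\nolimits_A(\Lambda).
\]
Hence $\fsf(\CD)$ is free over $A$ if and only if $\det_A(\Lambda)\cong \fd_{A/\bF[u]}^{-r}$ in $\operatorname{Pic}(A)$.

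It remains to identify $\fd_{A/\bF[u]}^{-1}$ with $\Omega_{A/\bF}^1$. This is given in Remark~\ref{rmk:different-canonical}: one has $\fd_{A/\bF[u]}\cdot \Omega_{A/\bF}^1=A\,du$. Since $\Omega_{A/\bF}^1$ is a rank-one projective $A$-module, hence torsion-free, and since $du\neq 0$ (as $u$ is separating), the submodule $A\,du$ is free of rank one, so $\fd_{A/\bF[u]}\otimes_A\Omega_{A/\bF}^1\cong A$. Tensoring $r$-fold this gives $\fd_{A/\bF[u]}^{-r}\cong (\Omega_{A/\bF}^1)^{\otimes r}$, and the equivalence stated in the corollary follows. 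No real obstacle arises; the argument is essentially bookkeeping in $\operatorname{Pic}(A)$ once the two earlier results are invoked, the only mildly technical point being the identification $A\,du\cong A$, which uses torsion-freeness of $\Omega_{A/\bF}^1$.
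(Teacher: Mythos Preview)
Your proof is correct and follows essentially the same route as the paper: both use Theorem~\ref{lattice} to identify $\fsf(\CD)\cong \fd_{A/\bF[u]}\cdot\Lambda$ and then invoke Remark~\ref{rmk:different-canonical} for $\fd_{A/\bF[u]}^{-1}\cong \Omega^1_{A/\bF}$. The only cosmetic difference is that the paper writes out the Steinitz decomposition $\Lambda\cong A^{r-1}\oplus\det_A(\Lambda)$ explicitly, while you phrase the same reduction via the determinant criterion in $\operatorname{Pic}(A)$; these are equivalent formulations of the same structure-theoretic step.
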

\begin{proof}
As a torsion-free module over the Dedekind domain $A$, $\Lambda$ is isomorphic to $A^{r-1}\oplus \det_A(\Lambda)$. For any separating element $u$, it yields 
\begin{equation}\label{eq:chain}
\fd_{A/\bF[u]}\cdot\Lambda\cong \fd_{A/\bF[u]}^{r-1}\oplus \fd_{A/\bF[u]}\operatorname{det}_A(\Lambda)\cong A^{r-1}\oplus \fd_{A/\bF[u]}^{r}\operatorname{det}_A(\Lambda).
\end{equation}
The construction implicates that $\Lambda$ is the period lattice of $\CD$ so that, by Theorem \ref{lattice}, \eqref{eq:chain} is isomorphic to $\fsf(\CD)$. Hence, the module $\fsf(\CD)$ is free as an $A$-module if, and only if $\fd_{A/\bF[u]}^{r}\det_A(\Lambda)$ is isomorphic to $A$. We conclude by Remark \ref{rmk:different-canonical}.
\end{proof}

\begin{Remark}
Assume that $E$ is an abelian and uniformizable $A$-module. Combining the $A$-module isomorphisms from Theorems \ref{lattice} and \ref{funceq}, namely
\begin{equation}
\fd_{A/\bF[u]}\cdot \Lambda_E\longrightarrow \fsf(E), \quad \fsf(E)\longrightarrow \Hom_A(H_B(\mot),A) \nonumber 
\end{equation}
together with $\Omega_{A/\bF}^1\cong \fd_{A\bF[u]}^{-1}$ as $A$-modules from Remark \ref{rmk:different-canonical}, one gets an $A$-module isomorphism $\Lambda_E\to \Hom_A(H_B(\mot),\Omega_{A/\bF}^1)$ between the period lattice of $E$ and its Betti cohomology. Such a relation was already known \textit{via} a map denoted $\beta_A$ and follows from \cite[Cor.~2.12.1]{anderson} (also described in \cite[Rmk.~5.30]{hartl}). It is therefore natural to ask whether the two approaches match. They are indeed related to each other by the following diagram of $A$-modules which is commutative up to a sign and each arrow is an isomorphism: \\
\centerline{
\xymatrix@C-0.1cm{
\fd_{A/\bF[u]}\Lambda_E \ar[d]_{\delta_u}& \fd_{A/\bF[u]}\otimes_A\Lambda_E \ar[r]^(.35){\id \otimes_A \beta_A} \ar[l] & \fd_{A/\bF[u]}\otimes_A \Hom_A(H_B(\mot),\Omega^1_{A/\bF}) \\
\fsf(E) \ar[r]^(.3){\text{Thm}~\ref{funceq}} & \Hom_A(H_B(\mot),A) & (\fd_{A/\bF[u]}\otimes_A \Omega_{A/\bF}^1)\otimes_A \Hom_A(H_B(\mot),A) \ar[l] \ar[u]
}
}
The commutativity up to a sign is not obvious and relies on a non-trivial calculation.  
\end{Remark}

\subsection{Special functions of Drinfeld-Hayes modules}\label{drinfeld-hayes}

This section aims not only to connect Thakur's shtuka function theory with special functions of Drinfeld-Hayes modules, but also to see how our definition of special functions matches \cite[Def.~3.9]{tuan}.

Given an abelian Anderson $A$-module over $L$ of rank and dimension one, Thakur in \cite{thakur93} was able to define an element $f$ in the fraction field of $A\otimes L$, called the \textit{shtuka function of $E$}, of fundamental importance for $E$. His construction of $f$ relies on the Drinfeld-module/shtuka dictionary. We should present a slightly more general alternative definition which avoids the use of shtukas.

In this section, we assume that $E$ is an abelian Anderson $A$-module of rank and dimension one, namely a \textit{Drinfeld-Hayes module}. Let $\kappa:E\stackrel{\simeq}{\longrightarrow} \bG_{a,L}$ be an $\bF$-vector space schemes isomorphism over $L$. As a non-zero element of the rank one projective $A\otimes L$-module $\mot=\mot(E)$, $\kappa$ is proportional to the element $\Frob_q\circ \kappa$ in $\mot$, where $\Frob_q$ denotes the $q$-Frobenius on $\bG_{a,L}$. This justifies the existence-part of following definition:
\begin{Definition}
We call the \textit{shtuka function of $E$ associated to $\kappa$}, and write it $f_{\kappa}$, the unique element in the fraction field of $A\otimes L$ such that $\Frob_q\circ \kappa=f_\kappa \cdot \kappa$.
\end{Definition} 
The morphism $\kappa$ also provides a family $\{\phi_a^{\kappa}\}_{a\in A}$ of $\bF$-linear endomorphisms of $\bG_{a,L}$, where $\phi_a^{\kappa}:=\kappa^{-1}\circ \phi_a\circ \kappa$. Seen as elements of $L\{\tau\}$, each $\phi_a^{\kappa}$ can be written uniquely as
\begin{equation}\label{eq:tau_expansion}
\phi_a^{\kappa}=(a)_0^{\kappa}+(a)_1^{\kappa}\tau+(a)_2^{\kappa}\tau^2+...+(a)_d^{\kappa}\tau^d\quad (d:=\deg(a),~(a)_i^{\kappa}\in L).
\end{equation} 

A brief description of the sign-normalized condition is called for. Let $t^{-1}$ be a local parameter at a point $\overline{\infty}$ above $\infty$ in $C\times_{\bF} \Spec\CI$. Any $x$ in the function field of $C\times_{\bF} \Spec\CI$ can be written uniquely in a Laurent series in $t^{-1}$:
\begin{equation}
x=\sum_{i\geq n}{c_it^{-i}} \quad (c_i\in \CI,~n\in \bZ) \nonumber
\end{equation}
with the coefficient $c_n$ being nonzero. We define $\sgn(x)$ to be the latter coefficient $c_n$ in $\CI^{\times}$ and call it the \textit{sign of $x$ (with respect to $t$)}. The function $\sgn$ then defines a group morphism from the group of nonzero elements of the function field of $C\times_{\bF} \Spec\CI$ to $\CI^{\times}$. In \cite{thakur93}, it is assumed that the assignment
\begin{equation}
A\setminus \{0\}\longrightarrow L\subset \CI, \quad a\longmapsto (a)_d^{\kappa} \quad (\text{with~}d=\deg(a))
\end{equation}
coincides with $\sgn$. If this is the case, we should say that $\kappa$ is \textit{sgn-normalized (with respect to $t$)}. As we do not make such an assumption here, we will assume that $\kappa$ is arbitrary.

\medskip

Thanks to the isomorphism of $A$-modules $\id\otimes\kappa:E(\bT)\to \bT$, we can see the elements of $\fsf(E) \subset E(\bT)$ as elements of $\bT$ satisfying a family of functional equations:
\begin{equation}\label{eq:explicit_special_function}
\fsf(E)\stackrel{1 \otimes \kappa}{\longrightarrow} \{\omega\in \bT~|~ \forall a\in A : (1\otimes \phi_a^{\kappa})(\omega)=(a\otimes 1)\omega \}. 
\end{equation} 
The above map is an isomorphism of $A$-modules. This seems at first sight in contrast with the definition of special functions as given in \cite[Def.~3.9]{tuan} where they consist of elements $\omega$ in the general Tate algebra satisfying $\omega^{(1)}=f_{\kappa}\omega$. We should see that the two definitions agree:
\begin{Proposition}\label{mem}
For any isomorphism $\kappa:E\stackrel{\simeq}{\longrightarrow}\bG_{a,L}$ of $\bF$-vector space schemes over $L$, we have the following equality of subsets of $\bT$:
\begin{equation}
\{\omega\in \bT~|~ \forall a\in A : (1\otimes \phi_a^{\kappa})(\omega)=(a\otimes 1)\omega \}=\{\omega\in \bT~|~ \omega^{(1)}=f_{\kappa}\omega \}. \nonumber
\end{equation}
\end{Proposition}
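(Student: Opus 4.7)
The plan is to reduce both sides to an algebraic identity in $Q:=\operatorname{Frac}(A\otimes L)$. First I would unwind the left-hand side of the claimed equality using the $\tau$-expansion \eqref{eq:tau_expansion}: for $\omega \in \bT$, one has $(1\otimes \phi_a^{\kappa})(\omega) = \sum_{i=0}^{\deg a}(a)_i^{\kappa}\,\omega^{(i)}$, so the condition amounts to
\[ \sum_{i=0}^{\deg a}(a)_i^{\kappa}\,\omega^{(i)} \;=\; a\,\omega \qquad \text{in } \bT,\ \forall a\in A. \]
The crucial step is to establish the following identity in $Q$, valid for every $a\in A$:
\[ a \;=\; \sum_{i=0}^{\deg a}(a)_i^{\kappa}\prod_{j=0}^{i-1} f_{\kappa}^{(j)}. \]
I would derive this by computing the element corresponding to $\phi_a^{\kappa}$ inside the one-dimensional $Q$-vector space $\mot_Q := \mot\otimes_{A\otimes L}Q$ (which has basis $\{\kappa\}$, since $\mot$ is locally free of rank one) in two ways: via the $A$-action on $\mot$, as $a\cdot \kappa = a\kappa$; and via the $\tau$-expansion, as $\sum_i(a)_i^{\kappa}\tau_{\mot}^{i}(\kappa)$, where the $\tau$-semilinearity of $\tau_{\mot}$ together with the defining relation $\tau_{\mot}(\kappa)=f_{\kappa}\kappa$ iterates to $\tau_{\mot}^i(\kappa) = \prod_{j=0}^{i-1}f_{\kappa}^{(j)}\kappa$.

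With this identity in hand, one inclusion is immediate. If $\omega^{(1)}=f_{\kappa}\omega$ in $\operatorname{Frac}(\bT)$, then iteration gives $\omega^{(i)} = \prod_{j=0}^{i-1} f_{\kappa}^{(j)}\omega$ for every $i$, so multiplying the $Q$-identity by $\omega$ recovers the first condition term by term.

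For the converse inclusion, rather than attempt to combinatorially isolate $\omega^{(1)}=f_{\kappa}\omega$ from the family of functional equations (which appears delicate, as there need not exist $a\in A$ of degree one when $C$ has positive genus), I would appeal to Theorem \ref{funceq}. If $\omega \in \bT$ satisfies the first condition, then the corresponding element of $E(\bT)$ via $\kappa^{-1}$ is a special function, so Theorem \ref{funceq} produces a $\bT$-linear $\tau$-equivariant map $\mu:\mot_{\bT}\to \bT$ with $\mu(\kappa)=\omega$. Tensoring with $\operatorname{Frac}(\bT)$ over $\bT$, and using the embedding $Q\hookrightarrow \operatorname{Frac}(\bT)$ afforded by $\bT$ being a domain, extends $\mu$ to a $\operatorname{Frac}(\bT)$-linear $\tau$-equivariant map $\bar\mu$ on $\mot_Q\otimes_Q \operatorname{Frac}(\bT)$. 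Applying $\bar\mu$ to the relation $\tau_{\mot}(\kappa)=f_{\kappa}\kappa$ (which now lives in this scalar extension) yields
\[ \omega^{(1)} \;=\; \tau(\bar\mu(\kappa))\;=\; \bar\mu(\tau_{\mot}(\kappa)) \;=\; \bar\mu(f_{\kappa}\kappa) \;=\; f_{\kappa}\omega, \]
establishing the second condition.

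The main obstacle is bookkeeping rather than substance: one must carefully synchronize the several avatars of $\mot$, namely its concrete realization $L\{\tau\}\cdot\kappa$ where $\phi_a^{\kappa}$ and $\tau^i$ live as twisted polynomials, its abstract $(A\otimes L)$-module structure with $\tau$-semilinear endomorphism $\tau_{\mot}$, and its one-dimensional $Q$-linearization $\mot_Q$ on which $f_{\kappa}$ acts by multiplication. Once these viewpoints are reconciled via the identity in $Q$, both inclusions reduce to formal manipulations.
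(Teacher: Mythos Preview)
Your argument is correct, and it takes a genuinely different route from the paper's.

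For the inclusion $\subseteq$ both you and the paper invoke Theorem~\ref{funceq}; you simply pass to $\operatorname{Frac}(\bT)$ to make the relation $\tau_{\mot}(\kappa)=f_\kappa\kappa$ available, whereas the paper keeps track of the ideal $\fv_\kappa$ with $(A\otimes L)\kappa=\fv_\kappa\mot$ and lands in $\{\omega\in \fv_\kappa\bT\mid \omega^{(1)}=f_\kappa\omega\}$. The real divergence is in the inclusion $\supseteq$: you first establish the identity $a=\sum_i (a)_i^\kappa\prod_{j<i} f_\kappa^{(j)}$ in $Q$ by computing $a\cdot\kappa$ two ways in the one-dimensional space $\mot_Q$, and then multiply by $\omega$. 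The paper instead proves Lemma~\ref{divisor} on the shape of $\operatorname{div} f_\kappa$ and uses it to show that any $\omega\in\bT$ with $\omega^{(1)}=f_\kappa\omega$ automatically lies in $\fv_\kappa\bT$, reducing to the equality already obtained from Theorem~\ref{funceq}. Your approach is more elementary in that it bypasses Lemma~\ref{divisor} entirely; note also that the $Q$-identity you prove along the way is precisely Corollary~\ref{start}, which the paper deduces \emph{from} Proposition~\ref{mem}, so your ordering makes that corollary logically independent. What the paper's route buys in exchange is the finer statement that special functions, expressed via $\kappa$, actually sit inside $\fv_\kappa\bT\subset\bT$, a piece of information not visible from your argument.
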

Before proving Proposition \ref{mem}, we investigate the shape of the divisor of $f_{\kappa}$ on $C\times_{\bF} \Spec L$. As $\mot$ is a rank one projective $A\otimes L$-module, there exists an ideal $\fv_{\kappa}$ of  $A\otimes L$ such that $(A\otimes L)\cdot \kappa=\fv_{\kappa}\mot$. We let $\fj$ be the maximal ideal of $A\otimes L$ generated by the set
\begin{equation}
\{(a\otimes1)-(1\otimes \ell(a)) \mid a \in A\}. \nonumber
\end{equation}
\begin{Lemma}\label{divisor}
Let $V_{\kappa}$ be the effective divisor on $\Div(C\times_{\bF}\Spec L)$ given by the primary decomposition of $\fv_{\kappa}$. There exists a divisor $\infty_{\kappa}$ of degree $1$ and supported at $\infty$, such that
\begin{equation}
\operatorname{div} f_{\kappa}=\tau^*V_{\kappa}-V_{\kappa}+\fj-\infty_{\kappa}.
\end{equation}
\end{Lemma}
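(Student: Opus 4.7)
My approach is to compare two descriptions of the $(A\otimes L)$-submodule of $\mot$ generated by $\tau(\kappa)$ and then extend the resulting identity of fractional ideals on the affine open to a divisorial identity on the whole curve $C\times_{\bF}\Spec L$.

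On the one hand, combining $\tau(\kappa)=f_\kappa\cdot \kappa$ with $(A\otimes L)\cdot \kappa=\fv_\kappa\cdot \mot$ yields
\[ (A\otimes L)\cdot \tau(\kappa)=(f_\kappa)\cdot \fv_\kappa\cdot \mot. \]
On the other hand, $\tau(\kappa)=\tau_\mot(1\otimes \kappa)$, and the $\tau$-semilinearity of the natural map $\mot\to \tau^*\mot$, $m\mapsto 1\otimes m$, shows that the submodule $(A\otimes L)(1\otimes \kappa)\subseteq \tau^*\mot$ equals $\tau(\fv_\kappa)\cdot \tau^*\mot$. Applying the $(A\otimes L)$-linear morphism $\tau_\mot$ gives
\[ (A\otimes L)\cdot \tau(\kappa)=\tau(\fv_\kappa)\cdot \tau_\mot(\tau^*\mot). \]

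The heart of the argument is the identification $\tau_\mot(\tau^*\mot)=\fj\cdot \mot$, which reduces to showing that $\mot/\tau_\mot(\tau^*\mot)\cong (A\otimes L)/\fj$. Locally trivializing the rank-one projective module $\mot$ reduces to the case $\mot\cong L\{\tau\}$, where $\tau_\mot$ is left multiplication by $\tau$ so that the cokernel identifies $L$-linearly with $L\{\tau\}/\tau L\{\tau\}\cong L$ (projection onto the constant term); using the $\tau$-expansion \eqref{eq:tau_expansion}, the right $A$-action of $a\in A$ on this $L$-line is multiplication by $(a)_0^\kappa=\ell(a)$, so $\mot/\tau_\mot(\tau^*\mot)$ is a length-one $A\otimes L$-module on which $\fj=(a\otimes 1-1\otimes \ell(a))_{a\in A}$ acts by zero. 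Cancelling the invertible module $\mot$ in the two expressions for $(A\otimes L)\cdot \tau(\kappa)$ yields $(f_\kappa)\fv_\kappa=\tau(\fv_\kappa)\cdot \fj$ as fractional ideals of $A\otimes L$, i.e.
\[ \operatorname{div}(f_\kappa)|_{U}=\tau^*V_\kappa-V_\kappa+\fj \]
on the affine open $U=\Spec(A\otimes L)=(C\setminus\{\infty\})\times \Spec L$.

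Finally, defining $\infty_\kappa$ to be the unique divisor supported on $\{\infty\}\times \Spec L$ making $\operatorname{div}(f_\kappa)=\tau^*V_\kappa-V_\kappa+\fj-\infty_\kappa$, I would compute its degree from the fact that a rational function on $C\times_\bF \Spec L$ has divisor of degree zero: combining $\deg \tau^*V_\kappa=\deg V_\kappa$ (Frobenius twist preserves degrees on closed points after passage to the perfect closure of $L$) and $\deg \fj=[(A\otimes L)/\fj:L]=1$, we conclude $\deg \infty_\kappa=1$. The main obstacle is the justification of $\tau_\mot(\tau^*\mot)=\fj\cdot \mot$, which crucially uses $\dim E=1$: in higher dimension, the cokernel would have length $d$ together with nilpotent structure coming from $\partial \phi_a-\ell(a)\cdot \mathrm{id}$, and the clean identification with the prime $\fj$ would have to be replaced by a power of it.
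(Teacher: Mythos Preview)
Your argument is correct and follows essentially the same route as the paper: both hinge on the identity $\tau_\mot(\tau^*\mot)=\fj\cdot\mot$, then translate it into the fractional ideal equation $(f_\kappa)\fv_\kappa=\tau(\fv_\kappa)\cdot\fj$ and complete by a degree count on $C\times_\bF\Spec L$. The only cosmetic difference is that the paper obtains the cokernel $\mot/\tau_\mot(\tau^*\mot)\cong\Hom_L(\Lie_E(L),L)$ by quoting Anderson's exact sequence, whereas you compute it directly via the identification $\mot\cong L\{\tau\}$ (note that this is a \emph{global} isomorphism coming from the choice of $\kappa$ in dimension~$1$, so the phrase ``locally trivializing'' is unnecessary and slightly misleading---you never need to localize over $A\otimes L$).
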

\begin{proof}
We borrow the following exact sequence from \cite[Lem.~3.1.4]{anderson}:
\begin{equation}\label{andersonexact}
0\longrightarrow \tau_{\mot}(\tau^*\mot)\longrightarrow \mot \longrightarrow \Hom_{L}(\Lie_E(L),L)\longrightarrow 0
\end{equation}
where the second arrow is the inclusion of $A\otimes L$-modules, and the third one is the differential at the origin. The exact sequence (\ref{andersonexact}) implies $\fj \mot \subset \tau_{\mot}(\tau^*\mot)\subset \mot$. As $\mot/\fj\mot$ is a one-dimensional $L$-vector space, $\tau_{\mot}(\tau^*\mot)$ equals either $\fj \mot$ or $\mot$. The last option is impossible since composition by the $q$-Frobenius on the motive is not a surjective operation (as $E$ is isomorphic to $\bG_{a,L}$). Hence, we have $\fj \mot=\tau_{\mot}(\tau^*\mot)$. Writing $\mot=\fv^{-1}_{\kappa}\cdot \kappa$ leads to the identity $\fj \fv_{\kappa}^{-1}=(f_{\kappa})(\tau^*\fv_{\kappa})^{-1}$ of fractional ideals. This is equivalent to 
\begin{equation}
(\operatorname{div} f_{\kappa})|_{\Spec A\otimes L}=\tau^*V_{\kappa}-V_{\kappa}+\fj \nonumber
\end{equation}
on the affine part of the curve, where $V_{\kappa}$ is the effective divisor associated to the ideal $\fv_{\kappa}$. Therefore, there exists $\infty_{\kappa}$ above $\infty$ for which
\begin{equation}
\operatorname{div} f_{\kappa}=\tau^*V_{\kappa}-V_{\kappa}+\fj-\infty_{\kappa}. \nonumber
\end{equation}
The ideal $\fj$ has degree one when seen as a divisor on $\Spec (A\otimes L)$ as its residue field is $L$. That the degree of $\infty_{\kappa}$ is $1$ follows from $\deg f_{\kappa}=0$ as a principal divisor on $C\times_{\bF}\Spec L$.
\end{proof}

\begin{proof}[Proof of Proposition \ref{mem}]
The proof of Theorem \ref{funceq} implies that the natural isomorphism\linebreak $\fsf(E)\to \Hom_{\bT}^{\tau}(\mot_{\bT},\bT)$ induces, via $\id\otimes \kappa$, the equality
\begin{equation}
\{\omega\in \bT \mid \forall a\in A: (1\otimes\phi_a^{\kappa})(\omega)=(a\otimes 1)\omega\}=\{\omega\in\fv_{\kappa}\bT \mid \omega^{(1)}=f_{\kappa}\omega\}.\nonumber
\end{equation}
We claim that $\{\omega\in\fv_{\kappa}\bT \mid \omega^{(1)}=f_{\kappa}\omega\}=\{\omega\in \bT\mid \omega^{(1)}=f_{\kappa}\omega\}$. Indeed, if $\omega \in \bT$ satisfies $\omega^{(1)}=f_{\kappa}\omega$, we have that $\omega$ belongs to the intersection $f^{-1}_{\kappa}\bT \cap \bT$. As $V_{\kappa}$ is effective, Lemma \ref{divisor} yields $f_{\kappa}^{-1}\bT \cap \bT=(\fj\tau^*\fv_{\kappa})^{-1}\fv_{\kappa}\bT\cap \bT$ which is a subset of $\fv_{\kappa}\bT$.
\end{proof}
Finally, we discuss two corollaries of Proposition \ref{mem}.
\begin{Corollary}\label{invertible}
Assume that there exists $\omega$ in $\bT^{\times}$ such that $\omega^{(1)}=f_{\kappa}\omega$. Then, the period lattice $\Lambda_E$ is isomorphic to $\Omega_{A/\bF}^1$ as an $A$-module.
\end{Corollary}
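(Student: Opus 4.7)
The plan is to chain together three isomorphisms: the reformulation of $\fsf(E)$ as the space of solutions of the $\tau$-difference equation $\omega^{(1)}=f_{\kappa}\omega$ supplied by Proposition \ref{mem}, the isomorphism $\fd_{A/\bF[u]}\cdot \Lambda_E\cong \fsf(E)$ of Theorem \ref{lattice} for a separating element $u\in A$, and the identification $\Omega_{A/\bF}^1\cong \fd_{A/\bF[u]}^{-1}$ of Remark \ref{rmk:different-canonical}.

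First, I would exploit the invertibility of $\omega$ to show that $S:=\{\omega'\in \bT \mid (\omega')^{(1)}=f_\kappa \omega'\}$ is a free $A$-module of rank one generated by $\omega$. Indeed, for any $\omega'\in S$ the ratio $\omega'/\omega$ lies in $\bT$ because $\omega\in \bT^\times$, and a direct computation gives $(\omega'/\omega)^{(1)}=\omega'/\omega$. Proposition \ref{proptate} immediately yields $\bT^\tau=A\otimes \bF=A$, so $\omega'\in A\omega$ and hence $S=A\omega$. Combining this with the identification \eqref{eq:explicit_special_function} and Proposition \ref{mem} gives $\fsf(E)\cong A$.

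Next, I would pick any separating $u\in A$ (for instance a uniformizer at a non-zero prime of $A$) and invoke Theorem \ref{lattice} to obtain $\fd_{A/\bF[u]}\cdot \Lambda_E\cong \fsf(E)\cong A$. The hypothesis on $\omega$ also implies rigid analytic triviality of the rank-one motive $\mot(E)$, so $E$ is uniformizable and $\Lambda_E$ is a projective $A$-module of rank one. Consequently, the ideal-theoretic product $\fd_{A/\bF[u]}\cdot \Lambda_E$ agrees with the tensor product $\fd_{A/\bF[u]}\otimes_A\Lambda_E$, giving $\Lambda_E\cong \fd_{A/\bF[u]}^{-1}$ in the Picard group of $A$. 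Composition with Remark \ref{rmk:different-canonical} then yields $\Lambda_E\cong \Omega_{A/\bF}^1$, as claimed.

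The main subtlety I anticipate is the passage from $\fsf(E)\cong A$ to $\Lambda_E\cong \fd_{A/\bF[u]}^{-1}$: one has to identify the product $\fd_{A/\bF[u]}\cdot \Lambda_E$ with the tensor product $\fd_{A/\bF[u]}\otimes_A\Lambda_E$, which rests on the invertibility of $\Lambda_E$. The unit $\omega$ is therefore used twice -- once to compute $\fsf(E)$ and once to guarantee the projectivity of rank one of $\Lambda_E$ via the rigid analytic trivialization it provides.
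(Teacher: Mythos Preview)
Your argument is correct and essentially identical to the paper's: you show $S=A\omega$ via the $\tau$-invariants $\bT^{\tau}=A$, identify $\fsf(E)\cong A$ through Proposition~\ref{mem} and \eqref{eq:explicit_special_function}, and then combine Theorem~\ref{lattice} with Remark~\ref{rmk:different-canonical}; the paper simply packages these last two ingredients into a single citation of Corollary~\ref{special-function-free} (specialized to $r=1$).

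One comment on the ``main subtlety'' you flag: the detour through rigid analytic triviality is unnecessary. A Drinfeld--Hayes module is a Drinfeld module of rank one, and Drinfeld modules are always uniformizable (the exponential $\exp_E:\CI\to\CI$ is surjective), so $\Lambda_E$ is automatically projective of rank one. Alternatively, $\Lambda_E$ is in any case finitely generated and torsion-free over the Dedekind domain $A$, hence projective, and the isomorphism $\fd_{A/\bF[u]}\otimes_A\Lambda_E\cong A$ already forces its rank to be one. Thus $\omega$ is really used only once.
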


\begin{proof}
Let $\omega$ be as in the statement and let $\omega'$ be an element of $\bT$ satisfying $\omega'^{(1)}=f_{\kappa}\omega'$. Then $(\omega'/\omega)^{(1)}=\omega'/\omega$ in $\bT$ so that $\omega'\in A\omega$. Thus $\{\omega\in \bT~ |~\omega^{(1)}=f_{\kappa}\omega\}$ is free of rank $1$ as an $A$-module. We conclude by Proposition \ref{mem}, Equation \eqref{eq:explicit_special_function} and Corollary \ref{special-function-free}.
\end{proof}
We do not know whether the converse of Corollary \ref{invertible} holds, and leave it as an open question:
\begin{Question*}
If the period lattice $\Lambda_E$ is isomorphic to $\Omega_{A/\bF}^1$ as an $A$-module, does there exist $\omega\in \bT^{\times}$ such that $\omega^{(1)}=f_{\kappa}\omega$?
\end{Question*}

\begin{Corollary}\label{start}
For all $a \in A$, we have the following identity in the fraction field of $A\otimes L$:
\begin{equation}
(a\otimes 1)=\sum_{n\geq 0}{(a)_n^{\kappa}f^{(n-1)}_{\kappa}\cdots f^{(1)}_{\kappa}f_{\kappa}}. \nonumber
\end{equation}
\end{Corollary}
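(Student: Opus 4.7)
The plan is to work entirely inside the $A$-motive $\mot=\mot(E)$, computing the right action of $a\in A$ on the distinguished element $\kappa\in \mot$ in two different ways; the identity will then follow by cancelling $\kappa$ inside the one-dimensional $Q$-vector space $\mot\otimes_{A\otimes L}Q$, where $Q:=\operatorname{Frac}(A\otimes L)$.

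The first step is to rewrite the left-hand side motivically. By the very definition of the $A\otimes L$-module structure on $\mot$, we have $(a\otimes 1)\cdot \kappa=\kappa\circ \phi_a$, and the defining relation $\phi_a^\kappa=\kappa\circ \phi_a\circ \kappa^{-1}$ lets us re-express this as $\phi_a^\kappa\circ \kappa$. Substituting the $\tau$-expansion \eqref{eq:tau_expansion} of $\phi_a^\kappa$ and recalling that $\tau$ on $\mot$ is composition with $\Frob_q$ on the left, this yields
\[ (a\otimes 1)\cdot \kappa=\sum_{n\geq 0}(a)_n^\kappa\cdot \tau^n(\kappa) \quad \text{in}~\mot, \]
where each scalar $(a)_n^\kappa\in L$ acts through $1\otimes (a)_n^\kappa\in A\otimes L$, and the sum is finite because $(a)_n^\kappa=0$ for $n>\deg(a)$.

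The second step is an induction on $n\geq 0$ showing the clean formula
\[ \tau^n(\kappa)=f_\kappa^{(n-1)}\cdots f_\kappa^{(1)}f_\kappa\cdot \kappa, \]
with the convention that the empty product ($n=0$) equals $1$. The base case $n=1$ is just the defining equation $\tau(\kappa)=f_\kappa\cdot \kappa$ of the shtuka function. The inductive step uses the $(A\otimes L)$-semilinearity of $\tau$ on $\mot$, namely $\tau(g\cdot m)=g^{(1)}\cdot \tau(m)$ for all $g\in A\otimes L$, $m\in \mot$; this is immediate from the definitions of the $\tau$-action and of the Frobenius twist, and it extends to $g\in Q$ after passing to the localization $\mot\otimes_{A\otimes L}Q$.

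Combining the two computations gives
\[ (a\otimes 1)\cdot \kappa=\left(\sum_{n\geq 0}(a)_n^\kappa\, f_\kappa^{(n-1)}\cdots f_\kappa^{(1)}f_\kappa\right)\cdot \kappa \]
inside $\mot\otimes_{A\otimes L}Q$. Since $\mot$ is projective of rank one and $\kappa$ is nonzero, $\kappa$ is a $Q$-basis of this one-dimensional vector space, so cancelling $\kappa$ produces the desired identity in $Q$. No serious difficulty is expected: the only subtlety is the bookkeeping of the two $A$-actions (the $A$-action on $\mot$ \emph{via} $\phi$ versus the $L$-action \emph{via} $\tau$-expansion coefficients) together with the correct use of the semilinearity convention.
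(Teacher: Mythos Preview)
Your proof is correct, but it takes a different route from the paper's. The paper deduces the identity from Proposition~\ref{mem}: it picks a nonzero special function $\omega\in\bT$ satisfying both $\omega^{(1)}=f_\kappa\omega$ and $(1\otimes\phi_a^\kappa)(\omega)=(a\otimes 1)\omega$, computes
\[
(a\otimes 1)\omega=\sum_{n\geq 0}(a)_n^\kappa\,\omega^{(n)}=\sum_{n\geq 0}(a)_n^\kappa\, f_\kappa^{(n-1)}\cdots f_\kappa\,\omega,
\]
and divides by $\omega$ in the integral domain $\bT$. Your argument is the ``motive-side'' mirror of this: you replace $\omega$ by $\kappa\in\mot$ and the twist $\omega\mapsto\omega^{(1)}$ by the $\tau$-action on $\mot$, then cancel $\kappa$ in the one-dimensional $Q$-vector space $\mot\otimes_{A\otimes L}Q$. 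The payoff is that your proof is purely algebraic and self-contained: it only uses the definition of the shtuka function and the semilinearity of $\tau$ on $\mot$, so it avoids the analytic input (Theorem~\ref{funceq}, the Tate algebra $\bT$, and the existence of a nonzero special function). The paper's route, in turn, makes explicit that this identity is exactly what a special function ``sees'', which is the narrative reason for placing it as a corollary to Proposition~\ref{mem}.
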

\begin{proof}
By Proposition \ref{mem}, there exists a non zero element $\omega$ in the general Tate algebra such that $\omega^{(1)}=f_{\kappa}\omega$ and also, for all $a$ in $A$,
\begin{equation}
(a\otimes 1)\omega=(1\otimes \phi_a^{\kappa})(\omega)=\sum_{n\geq 0}{(a)_n^{\kappa}\omega^{(n)}}=\sum_{n\geq 0}{(a)_n^{\kappa}f^{(n-1)}_{\kappa}\cdots f^{(1)}_{\kappa}f_{\kappa}\omega}. \nonumber
\end{equation}
The corollary follows by dividing out $\omega$.
\end{proof}
\begin{Remark}
The formula of Corollary \ref{start} is an immediate property of the shtuka function as defined by Thakur (see \cite[Eq.~$(\star \star)$]{thakur93}). Note that it already appears in \cite[Eq.~(46)]{andersonrankone}. This formula has been the starting point of \cite{tuan} and \cite{green} to define special functions. For our presentation, we have taken the opposite path.
\end{Remark}

\section{Gauss-Thakur sums}\label{sec:gauss-thakur-sums}
In this section, we introduce Gauss-Thakur sums attached to an arbitrary Anderson $A$-module $E$ over $L$. Our definition follows closely Thakur's one (see \cite{thakur88}) although there are some natural changes that we shall explain. 

\medskip

From now on, we fix $\fp$ a maximal ideal in $A$, $\Fp$ its residue field and $\degp$ its degree, i.e. the dimension of $\Fp$ over $\bF$. We denote by $E[\fp]\subset E(\CI)$ the $A$-module of $\fp$-torsion points of $E$ over $\CI$, i.e.~the space $E[\fp]=\{ e\in E(\CI)\mid~\forall a\in \fp:~ a e=0\}$.
\begin{Definition}\label{gaussthakurdef}
A \textit{multiplicative character} $\chi$ (of conductor $\fp$) is a group morphism from $(A/\fp)^{\times}$ to $\Fp^{\times}\subset \CI^{\times}$. An \textit{additive character} $\psi$ (for $E$ and $\fp$) is an $A$-module morphism from $A/\fp$ to $E[\fp]\subset E(\CI)$. Given such $\chi$ and $\psi$, we define their \textit{(tensor) Gauss-Thakur sum} as
\begin{equation}
g(\chi,\psi):= -\sum_{x\in (A/\fp)^{\times}}{\chi(x)^{-1}\otimes \psi(x)} \quad \in \Fp\otimes E[\fp]. \nonumber
\end{equation}
A multiplicative character $\chi$ will be lifted to a map $A\to \Fp$ by sending
$a\in A$ to $\chi(a+\fp)$ if $a\notin \fp$, and to $0$ if $a\in \fp$. By abuse of notation, this lift will also be denoted by $\chi$. 

\medskip

Using this lift, we define the space
\begin{equation}
G(E,\chi):=\left\{g\in \Fp\otimes E(\bC_{\infty})~\vert ~\forall \, a\in A:~ (1\otimes a)(g)=(\chi(a)\otimes 1)g\right\}, \nonumber
\end{equation}
and call it the \textit{space of Gauss-Thakur sums}. \end{Definition}
The naming ``space of Gauss-Thakur sums'' will be justified by Lemma \ref{projection-on-gauss-space}\ref{item:c} and Proposition \ref{nonzero} where we show that $G(E,\chi)$ is the $\Fp$-vector space generated by the Gauss-Thakur sums.
\begin{Remark}
The minus sign in the definition of Gauss-Thakur sums already appear in \cite{thakur88}. It serves as the normalization factor $|\Fp|-1=-1\in \bF$.
\end{Remark}

\begin{Remark}\label{multiplicativemap}
As opposed to Thakur's definition, a tensor product replaces the multiplication. Indeed, in general, $E$ is not \textit{equal} to $\bG_{a,L}^d$ as an $\bF$-vector space scheme but only \textit{isomorphic}. As such, it does not carry a canonical $L$-vector space structure and the product $\chi(x)^{-1}\psi(x)$ is not even defined. When $E$ equals $\bG_{a,L}^d$ as an $\bF$-vector space scheme, for instance in Thakur's consideration when $d=1$, or after fixing an isomorphism $E\simeq \bG_{a,L}^d$, one can define the \textit{classical} Gauss-Thakur sum as being the tensorless version of the sum (\ref{thakurgausssum}) in $E(\overline{L})=\overline{L}^d$. In these cases, the reader will be free to consider classical ones via the multiplication map $\Fp\otimes E[\fp]\rightarrow E(\overline{L})$. We will use this tensorless version in Section \ref{L-values} for the sake of a simplified presentation of the results. Note, however, that this morphism is injective if, and only if, the field of definition of the $\fp$-torsion points is linearly independent of $\Fp$.
\end{Remark}

\begin{Example}\label{psi-e}
For every torsion point $e\in E[\fp]$, one obtains a well-defined additive character
$\psi_e:A/\fp\to E[\fp], a+\fp \mapsto ae$. On the other hand, every additive character $\psi:A/\fp\to E[\fp]$ is of that form for $e=\psi(1+\fp)$, since $A/\fp$ is generated as an $A$-module by $1+\fp$.
\end{Example}

\begin{Lemma}\label{projection-on-gauss-space}
Let $\chi$ be a multiplicative character.
\begin{enumerate}[label=$(\arabic*)$]
\item \label{item:a} The map
\[  \eta_{\chi}: \Fp\otimes E[\fp]\longrightarrow \Fp\otimes E[\fp],\quad g\longmapsto
-\sum_{x\in (A/\fp)^{\times}} \left( \chi(x)^{-1}\otimes x \right)g \]
is an $\Fp$-linear projection onto $G(E,\chi)$.
\item \label{item:b} For $e\in E[\fp]$, the image $\eta_\chi(1\otimes e)$ is the Gauss-Thakur sum $g(\chi,\psi_e)$. Here $\psi_e$ is the additive character attached to $e$ as in Example \ref{psi-e}.
\item \label{item:c} Every Gauss-Thakur sum $g(\chi,\psi)$ lies in $G(E,\chi)$, and $G(E,\chi)$ is generated  as $\Fp$-vector space by the Gauss-Thakur sums.
\end{enumerate}
\end{Lemma}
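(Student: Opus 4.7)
The plan is to prove the three parts in order, with the bulk of the work concentrated in part \ref{item:a}, and parts \ref{item:b} and \ref{item:c} following quickly from it.

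For part \ref{item:a}, the $\Fp$-linearity of $\eta_\chi$ is immediate from the formula, since multiplication by an element $c\in\Fp$ corresponds to $(c\otimes 1)$ and commutes with each operator $(\chi(x)^{-1}\otimes x)$. To show $\eta_\chi(\Fp\otimes E[\fp])\subseteq G(E,\chi)$, I would fix $a\in A$ and verify the defining identity $(1\otimes a)\eta_\chi(g)=(\chi(a)\otimes 1)\eta_\chi(g)$ in two cases. If $a\notin\fp$, expand $(1\otimes a)\eta_\chi(g)$ and perform the substitution $y=xa$ in $(A/\fp)^\times$, using $\chi(xa)=\chi(x)\chi(a)$; the factor $\chi(a)$ pops out of the sum. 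If $a\in\fp$, then $\chi(a)=0$ by convention, and in each summand $ax\in\fp$ acts trivially on $E[\fp]$, so both sides vanish. The projection property $\eta_\chi^2=\eta_\chi$ I would verify by expanding the double sum, substituting $z=yx$, and using $\sum_{y\in(A/\fp)^\times}1=|\Fp|-1\equiv-1$ in $\bF$ (since $|\Fp|=q^{d_\fp}$ is a $p$-power); this cancels the extra minus sign introduced in the definition. Together with the previous step, this shows that $\eta_\chi$ restricted to $G(E,\chi)$ is the identity (one can also see this directly: for $g\in G(E,\chi)$, each operator $(\chi(x)^{-1}\otimes x)$ acts on $g$ as multiplication by $\chi(x)^{-1}\chi(x)=1$), confirming that $\eta_\chi$ is a projection onto $G(E,\chi)$.

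Part \ref{item:b} is a one-line computation: by definition
\[
\eta_\chi(1\otimes e)=-\sum_{x\in(A/\fp)^\times}(\chi(x)^{-1}\otimes x)(1\otimes e)=-\sum_{x}\chi(x)^{-1}\otimes \psi_e(x)=g(\chi,\psi_e),
\]
since $\psi_e(x)=xe$ by Example \ref{psi-e}.

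For part \ref{item:c}, the first observation is that every additive character $\psi$ equals $\psi_e$ for $e=\psi(1+\fp)$ (Example \ref{psi-e}), so $g(\chi,\psi)=\eta_\chi(1\otimes e)\in G(E,\chi)$ by parts \ref{item:b} and \ref{item:a}. To see that such sums span $G(E,\chi)$ as an $\Fp$-vector space, I first need to note that $G(E,\chi)\subseteq\Fp\otimes E[\fp]$: if $g\in G(E,\chi)$ and $a\in\fp$, then $(1\otimes a)g=(\chi(a)\otimes 1)g=0$, and writing $g$ in an $\bF$-basis of $\Fp$ shows that each component in $E(\CI)$ is annihilated by $a$, hence lies in $E[\fp]$. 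Then since $\eta_\chi$ is an $\Fp$-linear projection from $\Fp\otimes E[\fp]$ onto $G(E,\chi)$ and the elements $1\otimes e$ (for $e\in E[\fp]$) span $\Fp\otimes E[\fp]$ over $\Fp$, the images $g(\chi,\psi_e)=\eta_\chi(1\otimes e)$ span $G(E,\chi)$.

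The main obstacle I anticipate is bookkeeping around the lift of $\chi$ and the case split at $a\in\fp$; the only arithmetic input of substance is the identity $|(A/\fp)^\times|\equiv-1\pmod p$, which is what makes the sign convention in the definition of $g(\chi,\psi)$ align with the projection being idempotent.
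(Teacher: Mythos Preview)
Your proposal is correct and follows essentially the same approach as the paper's proof: the case split at $a\in\fp$ versus $a\notin\fp$ for the image, and the direct verification that $\eta_\chi$ is the identity on $G(E,\chi)$ via $\chi(x)^{-1}\chi(x)=1$ and $|(A/\fp)^\times|\equiv -1$ in $\bF$, with parts \ref{item:b} and \ref{item:c} deduced exactly as you describe. Your extra step verifying $G(E,\chi)\subseteq \Fp\otimes E[\fp]$ is a point the paper leaves implicit but is indeed needed for the statement of \ref{item:a} to make sense as written, so your proof is in fact slightly more careful.
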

\begin{proof}
Part \ref{item:b} is easily verified from the definitions, and \ref{item:c} directly follows from \ref{item:a} and \ref{item:b} using that every $\psi$ is of the form $\psi_e$, and that the elements of the form $1\otimes e$ generate the $\Fp$-vector space $\Fp\otimes E[\fp]$. Hence, it remains to prove \ref{item:a}: \\
$\Fp$-linearity is clear. For all $g\in \Fp\otimes E[\fp]$ and $a\in A$, one has
 \[  (1\otimes a)(\eta_{\chi}(g))=0=(\chi(a)\otimes 1)\eta_{\chi}(g), \]
 if $a\in \fp$, as well as
\begin{eqnarray*}  (1\otimes a)(\eta_{\chi}(g))&=& -\sum_{x\in (A/\fp)^{\times}} \left( \chi(x)^{-1}\otimes ax \right)g\\
&=& -\sum_{x\in (A/\fp)^{\times}} \left( \chi(ax)^{-1}\chi(a)\otimes ax \right)g
= (\chi(a)\otimes 1)\eta_{\chi}(g), \end{eqnarray*}
if $a\notin \fp$. Hence, the image lies in $G(E,\chi)$. Furthermore, for all $g\in G(E,\chi)$, one has
\begin{eqnarray*}  \eta_{\chi}(g) &=&  -\sum_{x\in (A/\fp)^{\times}} \left( \chi(x)^{-1}\otimes x \right)g
=  -\sum_{x\in (A/\fp)^{\times}} (\chi(x)^{-1}\chi(x)\otimes 1 )g \\
&=& - ((q^{\degp} -1)\otimes 1)\cdot g = g.\end{eqnarray*}
Hence, $\eta_{\chi}$ is a projection.
\end{proof}

\begin{Remark}
An \textit{isogeny} $f: (E,\phi)\rightarrow (E',\phi')$ of Anderson $A$-modules over $L$ of dimension $d$ is an $\bF$-vector space scheme morphism from $E$ to $E'$ with finite kernel over $\CI$ such that $f\circ \phi_a=\phi_a'\circ f$ for all $a\in A$. It is automatically surjective, as its image is a subgroup scheme of $E'$ of the same dimension. The \textit{degree} of $f$ is defined as the Fitting ideal of $\ker(f)$ with respect to its natural $A$-module scheme structure. For a prime $\fp$ of $A$ which does not divide the degree of $f$, we have an induced isomorphism of $A$-module schemes $E[\fp]\cong E'[\fp]$. Hence, for $\chi$ a multiplicative character of conductor $\fp$ not dividing $\deg(f)$, we naturally have $G(E,\chi)\cong G(E',\chi)$.  
\end{Remark}

If the lift $\chi:A\to \Fp$ of a multiplicative character is a homomorphism of $\bF$-algebras, we can extend it to a $\Fbar$-linear homomorphism $\Fbar\otimes A\to \Fbar$ whose kernel is some maximal ideal $\fP$ above $\fp$. The extended map is then just ``evaluation at $\fP$'', i.e. $\chi$ sends $a$ in $A$ to $a(\mathfrak{P})=1\otimes a\pmod{\mathfrak{P}}$.
Conversely, any maximal ideal $\mathfrak{P}$ above $\fp$ in $\overline{\bF}\otimes A$ defines a multiplicative character 
\[\chi_\fP:(A/\fp)^{\times}\longrightarrow \Fp^{\times}, \quad a+\fp\longmapsto a(\fP).\]
As those multiplicative characters play an important role, we make the following definition.
\begin{Definition}
A multiplicative character $\chi$ will be called \textit{evaluative}, if its lift is a homomorphism of $\bF$-algebras. 
When $\fP$ is the corresponding maximal ideal in $\Fbar\otimes A$, we will also denote $\chi$ by $\chi_\fP$. 

In particular, we will just speak of an evaluative character $\chi_\fP$ which implies that $\fP$ is the corresponding maximal ideal in $\Fbar\otimes A$.
\end{Definition}

\begin{Proposition}\label{nonzero} \label{zero}
Let $\chi$ be a multiplicative character.
\begin{enumerate}[label=$(\arabic*)$]
\item \label{item:A} If $\chi$ is not evaluative, then $G(E,\chi)$ is zero. In particular, for any additive character $\psi$, $g(\chi,\psi)$ is zero.
\item \label{item:B} If $\chi$ is evaluative and $\{\psi_i\}_i$ are $\Fp$-linearly independent additive characters in the space $\Hom_A(A/\fp,E[\fp])$, then their Gauss-Thakur sums $\{g(\chi,\psi_i)\}_i$ are $\Fp$-linearly independent in $G(E,\chi)$.
\end{enumerate}
\end{Proposition}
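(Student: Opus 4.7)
For part~(\ref{item:A}), I would argue by contrapositive. Fix $g\in G(E,\chi)$, so $(1\otimes a)g=(\chi(a)\otimes 1)g$ for all $a\in A$. Since $\chi$ is always multiplicative, its lift fails to be an $\bF$-algebra homomorphism only if either $\chi(c)\neq c$ for some $c\in\bF^\times$, or $\chi(a+b)\neq \chi(a)+\chi(b)$ for some $a,b\in A$. In the first case, pick $a\in A\setminus \fp$ and compute $(1\otimes ca)g$ in two ways: by multiplicativity of $\chi$ it equals $(\chi(c)\chi(a)\otimes 1)g$, while using $1\otimes c=c\otimes 1$ for $c\in\bF$ it equals $(c\chi(a)\otimes 1)g$. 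Subtracting, $((\chi(c)-c)\chi(a)\otimes 1)g=0$; since $(\chi(c)-c)\chi(a)\in \Fp^\times$ acts invertibly on $\Fp\otimes E(\CI)$ via the first tensor factor, this forces $g=0$. The second case is analogous, applied to $(1\otimes(a+b))g$: additivity of the Anderson action gives $((\chi(a+b)-\chi(a)-\chi(b))\otimes 1)g=0$ and again $g=0$. Hence $G(E,\chi)=0$.

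For part~(\ref{item:B}), Example~\ref{psi-e} lets me write $\psi_i=\psi_{e_i}$ with $e_i:=\psi_i(1)\in E[\fp]$, and the isomorphism $\Hom_A(A/\fp,E[\fp])\cong E[\fp]$ is $\Fp$-linear, so the hypothesis translates to $\Fp$-linear independence of $\{e_i\}$ in $E[\fp]$. The key identity is a semilinearity,
\[ g(\chi,\psi_{sv})=(\chi(s)\otimes 1)\cdot g(\chi,\psi_v) \qquad (s\in A,\ v\in E[\fp]), \]
proved by reindexing $x\mapsto sx$ in the defining sum and invoking $\chi(sx)=\chi(s)\chi(x)$. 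Now assume $\sum_i(\alpha_i\otimes 1)\,g(\chi,\psi_{e_i})=0$ with $\alpha_i\in\Fp$. Since $\chi$ is evaluative, $\sigma_\chi:=\chi|_\Fp\in \Gal(\Fp/\bF)$; put $s_i:=\sigma_\chi^{-1}(\alpha_i)$, so that $\chi(s_i)=\alpha_i$. Combining the semilinearity with additivity in the second argument, the sum collapses to $g(\chi,\psi_w)=0$, where $w:=\sum_i s_ie_i\in E[\fp]$. It therefore suffices to show that $v\mapsto g(\chi,\psi_v)$ is injective on $E[\fp]$: then $w=0$, and the $\Fp$-linear independence of $\{e_i\}$ forces $s_i=0$, hence $\alpha_i=0$.

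For this last injectivity, suppose $w\neq 0$ and pick an $\Fp$-linear functional $\mu\colon E[\fp]\to \Fp$ with $\mu(w)\neq 0$. Applying $\id\otimes \mu$ to $g(\chi,\psi_w)=-\sum_{x\in\Fp^\times}\chi(x)^{-1}\otimes xw$ yields the element $-\sum_x\chi(x)^{-1}\otimes x\mu(w)\in \Fp\otimes_\bF\Fp$. Using the Galois decomposition $\Fp\otimes_\bF\Fp\cong \prod_{\sigma\in\Gal(\Fp/\bF)}\Fp$ via $s\otimes t\mapsto (s\sigma(t))_\sigma$, the $\sigma_\chi$-coordinate of this element is $-\sum_{x\in\Fp^\times}\chi(x)^{-1}\sigma_\chi(x)\,\sigma_\chi(\mu(w))=\sigma_\chi(\mu(w))\neq 0$, since $\chi(x)=\sigma_\chi(x)$ makes each summand equal to $\sigma_\chi(\mu(w))$ and $|\Fp^\times|\equiv -1$ in $\bF$. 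Thus $(\id\otimes\mu)(g(\chi,\psi_w))\neq 0$, and a fortiori $g(\chi,\psi_w)\neq 0$. The main technical point of the whole argument is this last Galois-idempotent computation, which reduces non-vanishing to the fact that $\sigma_\chi$ is a field automorphism.
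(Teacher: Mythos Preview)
Your argument is correct in both parts. Part~(1) is essentially the paper's proof, just with the $\bF$-linearity and additivity checks separated into two cases rather than handled in a single computation.

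For part~(2), your route differs genuinely from the paper's. The paper first proves non-vanishing of $g(\chi,\psi)$ for $\psi\neq 0$ via finite Fourier inversion: one writes $1\otimes\psi(y)$ as a sum of $(\chi'(y)\otimes 1)g(\chi',\psi)$ over all characters $\chi'$, observes that the non-evaluative terms vanish by part~(1), and uses that $\Gal(\Fp/\bF)$ acts transitively on the evaluative ones (so if one Gauss--Thakur sum vanishes, all do, forcing $\psi=0$). For linear independence, the paper then exploits the direct sum $\bigoplus_i\operatorname{im}(\psi_i)\subseteq E[\fp]$: since $g(\chi,\psi_i)\in\Fp\otimes\operatorname{im}(\psi_i)$, projecting a dependence relation onto the $i$-th summand isolates $(\alpha_i\otimes 1)g(\chi,\psi_i)=0$. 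Your approach instead uses the semilinearity $g(\chi,\psi_{sv})=(\chi(s)\otimes 1)g(\chi,\psi_v)$ to collapse any dependence relation to a single equation $g(\chi,\psi_w)=0$, and then proves injectivity of $v\mapsto g(\chi,\psi_v)$ directly by reading off the $\sigma_\chi$-component in the Galois decomposition $\Fp\otimes_\bF\Fp\cong\prod_\sigma\Fp$. Your non-vanishing argument is more self-contained (it uses neither part~(1) nor Fourier inversion nor the transitivity of Frobenius on evaluative characters), while the paper's projection argument for linear independence is slightly more transparent than the semilinearity reduction.
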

\begin{proof}
For part \ref{item:A}, we prove that if $G(E,\chi)$ is non-zero, then $\chi$ is evaluative. If $g$ is a non zero element in $G(E,\chi)$, for $x$ and $y$ in $A$, $\alpha\in \bF$
\begin{eqnarray*} ((\alpha \chi(x)+\chi(y))\otimes 1)g
&=& (\alpha \otimes x)g+(1\otimes y)g =1\otimes (\alpha x+y)g \\
&=& (\chi(\alpha x+y)\otimes 1)g.
\end{eqnarray*}
As $g$ was non-zero, the lift $\chi$ is $\bF$-linear and - as it comes from a multiplicative character - even an $\bF$-algebra morphism, i.e. the character is evaluative. 

For part \ref{item:B}, let $\chi$ be evaluative. We first prove that $g(\chi,\psi)$ is non zero if $\psi$ is non zero. By finite Fourier inversion formula -- which was already mentioned in \cite[Prop.~1]{thakur88} -- for all $y$ in $A/\fp$, one has 
\begin{equation}\label{fourier}
1\otimes \psi(y)=-(q^{d_{\fp}}-1)\sum_{\chi}{g(\chi,\psi)(\chi(y)\otimes 1)}
\end{equation}
where the sum runs over multiplicative characters. By part \ref{item:A}, we can assume that the above sum runs over the set of evaluative $\chi$. This set is finite with $d_{\fp}$ elements and $\Frob_{\bF}$ as an element of $\Gal(\Fp/\bF)$ acts transitively on it by right-composition. Hence, if one of the $g(\chi,\psi)$ is zero, all are, and (\ref{fourier}) implies that $\psi$ is zero. 

Now, let $\{\psi_i\}_i$ be $\Fp$-linearly independent elements in $\Hom_A(A/\fp,E[\fp])$. We have a decomposition $E[\fp]=\bigoplus_i \operatorname{im}(\psi_i)$. Hence, if one has some 
relation $0=\sum_{i}{(\alpha_i\otimes 1)g(\chi,\psi_i)}$, by taking the projection onto $\operatorname{im(\psi_i)}$ with respect to this decomposition, one is led to $(\alpha_i\otimes 1)g(\chi,\psi_i)=0$. As $g(\chi,\psi_i)$ is non zero, $\alpha_i$ is zero.
\end{proof}

To any element of the period lattice $\Lambda_E$, one can associate a specific Gauss-Thakur sum according to the next definition.
\begin{Definition}\label{additive}
Let $u_{\fp}$ be a uniformizer at $\fp$ in $A$, and choose an element $z_\fp$ of the fractional ideal $\fp^{-1}$ which satisfies $\up z_\fp\equiv 1 \mod \fp$. For $\lambda$ an element of the period lattice $\Lambda_E$, one obtains an additive character $\psi_\lambda$ given by
\[  \psi_\lambda:A/\fp \longrightarrow \fp^{-1}A/A \longrightarrow E[\fp], \quad a\longmapsto az_{\fp}\longmapsto \exp_E(\partial (a z_\fp)(\lambda)). \]
It defines a map $\Lambda_E \rightarrow \Hom_A(A/\fp,E[\fp]),~\lambda\mapsto \psi_\lambda$ and, by composition with $\psi \mapsto g(\chi,\psi)$, an $A$-module morphism
\begin{equation}
g_{u_{\fp}}:\Lambda_E\longrightarrow G(E,\chi). \nonumber
\end{equation}
\end{Definition}

\begin{Remark}
In the definition, we could have omitted the uniformizer $\up$, and just worked with a generator $z_{\fp}$ of $\fp^{-1}A/A$. In Section \ref{sec:values}, however, we will need a uniformizer, and therefore decided to put the dependence on $\up$ already here.

It should also be noted that after fixing $\up$, the definitions of $\psi_\lambda$ and of $g_{\up}$ do not depend on $z_{\fp}$, as long as $\up z_\fp\equiv 1 \mod \fp$. Namely, for a second such element $z'_{\fp}$, one has $\up(z_\fp-z'_{\fp})\equiv 0 \mod \fp$. Hence,
$z_\fp-z'_{\fp}\in A$ which implies 
\[ \exp_E(\partial z_\fp(\lambda))=\exp_E(\partial z'_\fp(\lambda)) \quad \forall \lambda\in \Lambda_E. \]
\end{Remark}
The following proposition provides a criterion of uniformizability.
\begin{Proposition}\label{uniformizable}
$E$ is uniformizable if, and only if, $g(\chi,\psi_{\lambda})$ generates $G(E,\chi)$ for $\lambda$ running through generators of $\Lambda_E$ as an $A$-module.
\end{Proposition}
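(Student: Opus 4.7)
The plan is to translate the hypothesis into the inclusion $E[\fp] \subset \exp_E(\Lie_E(\CI))$. If $\chi$ is not evaluative, Proposition~\ref{nonzero}\ref{item:A} gives $G(E,\chi)=0$ so both sides are vacuous; I therefore assume $\chi$ is evaluative. Combining Example~\ref{psi-e} (which identifies $\Hom_A(A/\fp,E[\fp])$ with $E[\fp]$ via $\psi \mapsto \psi(1)$), Lemma~\ref{projection-on-gauss-space}\ref{item:c} (surjectivity of $\psi \mapsto g(\chi,\psi)$ onto $G(E,\chi)$), and Proposition~\ref{nonzero}\ref{item:B} (injectivity on $\Fp$-linearly independent families), one obtains an $\Fp$-linear isomorphism $E[\fp] \cong G(E,\chi)$, $e \mapsto g(\chi,\psi_e)$. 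Under this identification, $g_{u_\fp}$ becomes the $A$-linear map
\[ \beta:\Lambda_E \longrightarrow E[\fp],\qquad \lambda \longmapsto \exp_E\bigl(\partial z_\fp (\lambda)\bigr). \]

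The key step is to identify the image of $\beta$ with $U\cap E[\fp]$, where $U := \exp_E(\Lie_E(\CI))$. The inclusion $\operatorname{im}(\beta) \subset U\cap E[\fp]$ is immediate. Conversely, $U\cap E[\fp] = \exp_E(\fp^{-1}\Lambda_E)$, where $\fp^{-1}\Lambda_E \subset \Lie_E(\CI)$ denotes the $A$-submodule spanned by elements $\partial r(\lambda)$ with $r\in \fp^{-1}$ and $\lambda\in \Lambda_E$. Since $z_\fp \notin A$ and $\fp^{-1}/A$ is a one-dimensional $\Fp$-vector space, one has $\fp^{-1} = A + A z_\fp$, whence $\fp^{-1}\Lambda_E = \Lambda_E + \partial(A z_\fp)(\Lambda_E)$; applying $\exp_E$ (which vanishes on $\Lambda_E$) recovers the image of $\beta$. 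As $\Lambda_E$ is projective over $A$, the identification $\fp^{-1}\Lambda_E/\Lambda_E \cong \Lambda_E/\fp\Lambda_E$ gives $\dim_{\Fp}(U\cap E[\fp]) = \operatorname{rank}_A(\Lambda_E)$.

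The hypothesis of the proposition is therefore equivalent to $E[\fp] \subset U$. The forward implication is trivial: if $E$ is uniformizable, $U=E(\CI)$. For the converse I would compare $\Fp$-dimensions: while $\dim_{\Fp}(U\cap E[\fp]) = \operatorname{rank}_A(\Lambda_E)$ by the above, for abelian $E$ the $\Fp$-dimension of $E[\fp]$ equals the rank $r$ of the motive $\mot(E)$. Hence $E[\fp]\subset U$ forces $\operatorname{rank}_A(\Lambda_E)=r$, which characterises uniformizability. The hardest point is this last dimension count, which rests on the equality $\dim_{\Fp}E[\fp] = r$ for abelian Anderson modules; the non-abelian case would require a different argument to deduce full surjectivity of $\exp_E$ from the single-prime condition $E[\fp]\subset U$.
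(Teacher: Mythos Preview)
Your argument is correct and follows essentially the same route as the paper's proof. Both reduce to the map $\Lambda_E/\fp\Lambda_E \to E[\fp]$ (equivalently $\Hom_A(A/\fp,E[\fp])$) induced by $\lambda\mapsto \exp_E(\partial z_\fp(\lambda))$, observe that it is injective, and deduce bijectivity precisely in the uniformizable case by a dimension count. The paper then concludes via Lemma~\ref{projection-on-gauss-space}\ref{item:c}, whereas you first identify $G(E,\chi)\cong E[\fp]$ using Proposition~\ref{nonzero}\ref{item:B}; these are equivalent packagings of the same step.

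Two small remarks. First, your dismissal of the non-evaluative case as ``vacuous'' is slightly off: when $G(E,\chi)=0$ the generation condition is trivially satisfied, so the ``if'' direction of the proposition would assert that every $E$ is uniformizable, which is false. In other words the statement itself tacitly assumes $\chi$ evaluative, and the paper's proof does too (the final appeal to Lemma~\ref{projection-on-gauss-space}\ref{item:c} only gives the converse once one knows $\psi\mapsto g(\chi,\psi)$ is injective, i.e.~Proposition~\ref{nonzero}\ref{item:B}). Your reduction to evaluative $\chi$ is therefore the right move; just do not call the other case vacuous.

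Second, your caution about the abelian hypothesis is well placed and matches the paper exactly: the paper's proof also invokes the rank $r$ and the equivalence $\operatorname{rank}_A\Lambda_E=r\Leftrightarrow E$ uniformizable, together with $\dim_{\Fp}E[\fp]=r$, all of which use abelianness. So your ``hardest point'' is precisely the point the paper takes for granted; you have not missed anything, you have simply been explicit where the paper is terse.
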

\begin{proof}
The period lattice $\Lambda_E$ is a finitely generated and torsion free module over $A$, consequently projective of some rank $s$. Torsion-free is clear, and finite generation follows from the $\bF[t]$-case in \cite[Lem.~2.4.1]{anderson}. Further, $s\leq r$ with equality if, and only if, $E$ is uniformizable. In any case, the exponential map induces an injection of $A$-modules $\Lambda_E/\fp\Lambda_E \hookrightarrow E[\fp]$. We then have a well-defined injection of $\bF$-vector spaces
\begin{equation}
\Lambda_E/\fp\Lambda_E \longrightarrow \Hom_A(A/\fp,E[\fp]), \quad \lambda+\fp\Lambda_E \longmapsto \psi_{\lambda} \nonumber
\end{equation}
which is bijective if, and only if, $E$ is uniformizable by comparing dimensions. We conclude with Lemma \ref{projection-on-gauss-space} \ref{item:c}.
\end{proof}

\begin{Remark}
Proposition \ref{nonzero} together with Proposition \ref{uniformizable} and its proof even yield: If $\chi$ is evaluative, then $E$ is uniformizable if, and only if, the induced map $\Lambda_E/\fp \Lambda_E \rightarrow G(E,\chi)$ is an isomorphism of $\Fp$-vector spaces. 
\end{Remark}

\begin{Remark}
We assume here that $L$ is a finite separable extension of $K$. Let $K^{\text{sep}}$ be the separable closure of $K$ in $\CI$ and let $G_K$ be the Galois group of the field extension $K\subset K^{\text{sep}}$. The action of $G_{K}$ on $E[\fp]$ defines, by left-composition on the the set of additive characters, a continuous action on $G(E,\chi)$. Namely, $g(\chi,\psi)^{\sigma}=g(\chi,\sigma\circ \psi)$ for $\sigma$ in $G_K$. In the case where $E$ is uniformizable, we have
\begin{equation}
T_{\fp}(E):=\varprojlim E[\fp^n]\cong \varprojlim \Lambda_E/\fp^n\Lambda_E\cong A_{\fp}^r \nonumber
\end{equation}
and it induces a continuous $\fp$-adic representation $\rho_{E,\fp}:G_K\to \GL_r(A_{\fp})$. If further $\chi$ is evaluative, the isomorphism $\Lambda_E/\fp \Lambda_E \rightarrow G(E,\chi)$ implies that the continuous representation $G_K$ on $G(E,\chi)$ is isomorphic to $\chi\circ \rho_{E,\fp}$.
\end{Remark}

\section{Values of special functions at algebraic points}\label{sec:values}
Let $\mathfrak{P}$ be a maximal ideal in $\overline{\bF}\otimes A$ above $\fp$. For $f$ in the general Tate algebra $\bT$ and $\chi_{\mathfrak{P}}$ the multiplicative character corresponding to evaluation at $\mathfrak{P}$, we denote $f(\mathfrak{P})$ in $\Fp\otimes \CI$ the image of $f$ under $(\chi_{\mathfrak{P}} \hat{\otimes} 1)$. Similarly, for $\omega$ in $E(\bT)$, we define $\omega(\mathfrak{P})$ in $\Fp\otimes E(\CI)$ as the image of $\omega$ through $(\chi_{\mathfrak{P}} \hat{\otimes} \id)$.

\begin{Proposition}\label{prop}
For any special function $\omega$, the value $\omega(\mathfrak{P})$ is in $G(E,\chi_{\mathfrak{P}})$. In particular by Lemma \ref{projection-on-gauss-space} \ref{item:c}, $\omega(\mathfrak{P})$ is a linear combination of certain $g(\chi_{\mathfrak{P}},\psi)$ with coefficients in $\Fp$.
\end{Proposition}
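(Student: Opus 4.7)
The plan is to apply the continuous $\bF$-linear evaluation map $(\chi_{\mathfrak{P}} \hat{\otimes} \id): E(\bT) = A \hat{\otimes} E(\CI) \to \Fp \otimes E(\CI)$ directly to the defining identity $(a \otimes 1)\omega = (1 \otimes a)\omega$ of the special function $\omega$, and observe that the resulting equation is precisely the defining condition of $G(E, \chi_{\mathfrak{P}})$. The entire content of the proposition is thus really a functoriality statement for the completed tensor product together with the fact that evaluativity promotes $\chi_{\mathfrak{P}}$ to an actual $\bF$-algebra homomorphism.

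As a preliminary, I would check that the evaluation map extends continuously to the completion. Fixing an $\bF$-basis $\{t_n\}_{n\ge 0}$ of $A$, Proposition \ref{proptate} allows one to write any element of $E(\bT)$ uniquely as $\sum_n t_n \otimes e_n$ with $(e_n)_n$ a null sequence in $E(\CI)$. Since every element of $\Fp \subset \overline{\bF} \subset \CI$ has absolute value at most $1$, the image $\sum_n \chi_{\mathfrak{P}}(t_n) \otimes e_n$ converges in the (finite $\bF$-rank, hence already complete) target $\Fp \otimes E(\CI)$, which gives the desired continuous extension $\omega \mapsto \omega(\mathfrak{P})$.

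The key observation is that because $\chi_{\mathfrak{P}}: A \to \Fp$ is an $\bF$-algebra morphism, for any $x \in E(\bT)$ and any $a \in A$ we have
\[ (\chi_{\mathfrak{P}} \hat{\otimes} \id)\bigl((a \otimes 1)x\bigr) = (\chi_{\mathfrak{P}}(a) \otimes 1)\cdot x(\mathfrak{P}) \quad \text{and} \quad (\chi_{\mathfrak{P}} \hat{\otimes} \id)\bigl((1 \otimes a)x\bigr) = (1 \otimes a)\cdot x(\mathfrak{P}), \]
the first coming from $\chi_{\mathfrak{P}}(at_n) = \chi_{\mathfrak{P}}(a)\chi_{\mathfrak{P}}(t_n)$ applied coefficientwise in the series expansion, and the second being trivial since the right $A$-action acts only on the $E(\CI)$-factor. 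Applying $(\chi_{\mathfrak{P}} \hat{\otimes} \id)$ to $(a \otimes 1)\omega = (1 \otimes a)\omega$ then yields
\[ (\chi_{\mathfrak{P}}(a) \otimes 1)\, \omega(\mathfrak{P}) = (1 \otimes a)\, \omega(\mathfrak{P}) \]
for every $a \in A$, which is precisely the defining condition of $G(E, \chi_{\mathfrak{P}})$. The in-particular claim follows at once from Lemma \ref{projection-on-gauss-space}\ref{item:c}. The argument poses no real obstacle: the only minor care required is in checking continuity and the intertwining property of the evaluation map, both of which are transparent from the explicit description in Proposition \ref{proptate}.
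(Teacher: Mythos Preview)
Your proposal is correct and follows exactly the same approach as the paper: apply $(\chi_{\mathfrak{P}}\hat{\otimes}\id)$ to the defining identity $(a\otimes 1)\omega=(1\otimes a)\omega$ and observe that the result is the defining condition of $G(E,\chi_{\mathfrak{P}})$. The paper's proof is a single terse sentence to this effect, whereas you have spelled out the continuity of the evaluation map and the intertwining of the two $A$-actions in more detail; neither approach adds any idea beyond the other.
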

\begin{proof}
Let $\omega\in \fsf(E)\subset E(\bT)$. It satisfies $(a\otimes 1)\omega=(1\otimes a)\omega$ for all $a\in A$. Composition with $(\chi_{\mathfrak{P}} \hat{\otimes} \id)$ yields $(a(\mathfrak{P})\otimes 1)\omega(\mathfrak{P})=(1\otimes a)\omega(\mathfrak{P})$ in $\overline{\bF}\otimes E(\CI)$ for all $a\in A$. 
\end{proof}

\begin{Theorem}\label{values}
Let $u_{\fp}$ be a uniformizer of $\fp$ in $A$. In the category of $A$-modules, the following diagram commutes: \\
\centerline{
\xymatrix@C+10mm{
\fd_{A/\bF[u_{\fp}]}\cdot \Lambda_E \ar[r]^{\delta_{u_{\fp}}} \ar[dr]_{g_{u_{\fp}}} &   \fsf(E) \ar[d]^{(\chi_{\mathfrak{P}} \hat{\otimes} \id)} \\
 & G(E,\chi_{\mathfrak{P}})
}}
where $\delta_{u_{\fp}}$ is the map of Theorem \ref{lattice} and $g_{u_{\fp}}$ is given by Definition \ref{additive}.
\end{Theorem}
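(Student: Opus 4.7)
The plan is to expand both paths of the diagram explicitly via the Neumann series defining $\delta_u$, evaluate at $\fP$, and then match the results through the projection $\eta_{\chi_\fP}$ onto $G(E,\chi_\fP)$. Write $u := \up$. Given $\lambda \in \fd_{A/\bF[u]}\cdot \Lambda_E$, by Lemma~\ref{different-module} there is a unique lift $x \in \fD_{A/\bF[u]}\otimes_A \Lambda_E$ under the multiplication map; I represent $x$ in $A\otimes_{\bF[u]}\Lambda_E$ as $\sum_i a_i \otimes \lambda_i$, so that $\lambda = \sum_i \partial(a_i)\lambda_i$.

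From Theorem~\ref{lattice}, $\delta_u(\lambda) = (\id\hat{\otimes}\exp_E)\bigl((1\otimes \partial u - u\otimes 1)^{-1}x\bigr)$. Expanding the operator as the Neumann series $\sum_{n\geq 0}u^n \otimes (\partial u)^{-n-1}$ on $\Lie_E(\bT)$ and noting that $\chi_\fP(u) = 0$ (since $u \in \fp \subset \fP$), only the $n=0$ term survives the evaluation, yielding
\[
H := (\chi_\fP \hat{\otimes} \id)\delta_u(\lambda) = \sum_i \chi_\fP(a_i) \otimes e_i, \quad e_i := \exp_E\bigl((\partial u)^{-1}\lambda_i\bigr).
\]
By Proposition~\ref{prop}, $H \in G(E,\chi_\fP)$. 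On the other hand, using $\lambda = \sum_i \partial(a_i)\lambda_i$ and $\bF$-linearity of $\exp_E$ gives $\psi_\lambda = \sum_i \phi_{a_i}\circ \psi_{\lambda_i}$, whence combined with the identity $g(\chi_\fP, \phi_a\circ \psi) = (\chi_\fP(a)\otimes 1)\,g(\chi_\fP, \psi)$ (a direct change of variable in the defining sum, valid for any additive character $\psi$ as the terms with $a\in\fp$ vanish because $\psi$ lands in $E[\fp]$), I decompose
\[
g_{\up}(\lambda) = \sum_i (\chi_\fP(a_i)\otimes 1)\, g(\chi_\fP, \psi_{\lambda_i}) = \sum_i \chi_\fP(a_i)\cdot \eta_{\chi_\fP}\bigl(1\otimes e_\lambda^{(i)}\bigr),
\]
where $e_\lambda^{(i)} := \psi_{\lambda_i}(1) = \exp_E(\partial z_\fp \cdot \lambda_i) \in E[\fp]$.

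To conclude, observe that both $H$ and $g_{\up}(\lambda)$ are fixed by $\eta_{\chi_\fP}$; by $\Fp$-linearity, $H = \sum_i \chi_\fP(a_i)\, \eta_{\chi_\fP}(1\otimes e_i)$. Setting $p := u z_\fp - 1 \in \fp$, the relation $\partial z_\fp = (\partial u)^{-1}(\id + \partial p)$ together with the commutation $\partial u \cdot \partial p = \partial p \cdot \partial u$ and the intertwining $\exp_E \circ \partial p = \phi_p \circ \exp_E$ gives $e_i - e_\lambda^{(i)} = -\phi_p(e_i)$. Substituting and applying linearity yields
\[
H - g_{\up}(\lambda) = \sum_i \chi_\fP(a_i)\, \eta_{\chi_\fP}\bigl(1\otimes (-\phi_p(e_i))\bigr) = -\eta_{\chi_\fP}\bigl((1\otimes p)\cdot H\bigr).
\]
Since $H\in G(E,\chi_\fP)$ and $p \in \fp$, the $G$-condition gives $(1\otimes p)\cdot H = (\chi_\fP(p)\otimes 1)\cdot H = 0$, forcing $H = g_{\up}(\lambda)$. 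The conceptual heart of the argument — and what I expect to be the only nonroutine step — is this final observation: the error term coming from the difference between $(\partial u)^{-1}$ and $\partial z_\fp$, namely a factor of $\phi_p$ with $p \in \fp$, is killed after $\eta_{\chi_\fP}$-projection precisely because $(1\otimes \fp)\cdot G(E,\chi_\fP) = 0$.
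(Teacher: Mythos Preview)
Your argument is correct and follows the same overall shape as the paper's proof: evaluate the Neumann series at $\fP$ (so only the $n=0$ term survives since $\chi_\fP(u)=0$), land in $G(E,\chi_\fP)$, and compare with $g_{\up}(\lambda)$ via the projection $\eta_{\chi_\fP}$. The execution, however, is different and in one respect cleaner. The paper works with the specific generators $f'_s(s)\lambda$ and, after evaluation, rewrites the resulting sum in an $\bF$-basis $\{c_j(\fP)\}$ of $\Fp$ so as to force each new summand $b_j\exp_E(\partial\up^{-1}\lambda)$ into $E[\fp]$ before applying $\eta_{\chi_\fP}$ and doing a change of variable in the Gauss sum. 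You instead decompose $g_{\up}(\lambda)$ via the identity $g(\chi_\fP,\phi_a\circ\psi)=(\chi_\fP(a)\otimes 1)g(\chi_\fP,\psi)$ and recognise the discrepancy between the two sides as $-(1\otimes p)H$ with $p=u z_\fp-1\in\fp$, which vanishes by the defining property of $G(E,\chi_\fP)$. This bypasses the basis-rewriting trick entirely and makes transparent why the choice of $z_\fp$ is irrelevant.

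One small point to tighten: Lemma~\ref{projection-on-gauss-space} defines $\eta_{\chi_\fP}$ only on $\Fp\otimes E[\fp]$, whereas you apply it to $1\otimes e_i$ with $e_i\in E[(u)]$, which need not lie in $E[\fp]$ if $u$ has other prime factors. The fix is trivial (extend $\eta_{\chi_\fP}$ by its formula to $\Fp\otimes E(\CI)$; the properties you use---$\Fp$-linearity, fixing $G(E,\chi_\fP)$, commuting with $(1\otimes a)$---hold verbatim), or better, avoid it: observe directly that $\sum_i \chi_\fP(a_i)\otimes e_\lambda^{(i)} = H + (1\otimes p)H = H$ in $\Fp\otimes E(\CI)$, and only then apply $\eta_{\chi_\fP}$ to this element of $\Fp\otimes E[\fp]$ to get $g_{\up}(\lambda)=\eta_{\chi_\fP}(H)=H$.
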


\begin{Example}\label{example}
Let $C$ be $\bP^1_{\bF}$ and $E=\CC$ be the Carlitz module. Let $\infty$ be the closed point $[0:1]$ of $C$ and fix $t$ in $A$ such that $t^{-1}$ is a uniformizer at $\infty$. We have $A=\bF[t]$ and we let $\fp$ be a prime in $A$. Clearly, $A=\bF[t,u_{\fp}]$ for $u_{\fp}=\fp(t)$. Hence, by \cite[Prop.~III.2.4]{neukirch}, $\fd_{A/\bF[u_{\fp}]}=(\fp'(t))$. If $\zeta$ is a root of $\fp$ in $\overline{\bF}$, evaluation at $\zeta$ defines an evaluative multiplicative character $\chi_{\zeta}$.

By Proposition \ref{prop:delta-always-the-same}, $\delta_{\up}$ sends $\fp'(t)\cdot \tilde{\pi}$ in $\fd_{A/\bF[u_{\fp}]}\cdot \Lambda_{\CC}$ to the Anderson-Thakur function $\omega(t)$ in $\fsf(\CC)$.
Then, the commutativity of the diagram in Theorem \ref{values} is equivalent to Angl\`es and Pellarin's formula (see \cite[Thm.~2.9]{angles}):
\begin{equation}
\omega(\zeta)=\fp'(\zeta)g(\chi_{\zeta},\psi_{\tilde{\pi}}). \nonumber
\end{equation}
\end{Example}
\begin{Remark}
Since $\fd_{A/\bF[u_{\fp}]}$ corresponds, as an invertible $A$-module, to the inverse of the module of K\"ahler differentials of $A$ over $\bF$, we could have avoid the use of the different ideal thanks to a suitable tensor product with $\Omega_{A/\bF}^1$. The use of $\fd_{A/\bF[u_{\fp}]}$ however has the benefit that it allows a better understanding of the appearance of the factor $\fp'(\zeta)$ in Angl\`es and Pellarin's formula.
\end{Remark}

\begin{proof}[Proof of Thm.~\ref{values}]
As all maps are $A$-linear, it suffices to prove commutativity of the diagram for generators of 
$\fd_{A/\bF[\up]}\cdot \Lambda_E$, i.e. for elements of the form $f'_s(s)\lambda$ where $s\in A$ is a generator for the field extension $K/\bF(\up)$, and $f_s(X)\in \bF[\up][X]$ is its minimal polynomial (cf.~Section~\ref{characterisation-of-different}), as well as $\lambda\in \Lambda_E$. Further, let $\sum_i d_i\otimes a_i\in \fD_{A/\bF[\up]}\subseteq A\otimes_{\bF[\up]} A$ be the unique preimage of $f'_s(s)$ under the multiplication map. 


By definition of $\delta_{\up}$ (see Theorem~\ref{lattice}), we have
\begin{eqnarray*} \delta_{u_{\fp}}(f'_s(s)\lambda)&=& (\id \hat{\otimes}\exp_E)\left((\id\otimes \partial u_{\fp}-u_{\fp}\otimes 1)^{-1}\bigl(\sum_{i} (d_i\otimes \partial a_i)\lambda\bigr)\right) \\
&=& \sum_{i}{(d_i\otimes 1)(\id \hat{\otimes}\exp_E)\left((\id\otimes \partial u_{\fp}-u_{\fp}\otimes 1)^{-1}(1\otimes  \partial a_i \lambda)\right)}.
\end{eqnarray*}
Once one evaluates at $\chi_{\fP}$, as $\chi_{\fP}(u_{\fp})=0$, it yields 
\begin{equation}\label{calculus}
\delta_{\up}(f'_s(s)\lambda)(\fP)=\sum_i d_i(\fP)\otimes a_i\exp_E(\partial u_{\fp}^{-1}\lambda). 
\end{equation}
By Proposition \ref{prop}, we know that the latter lands in $G(E,\chi_{\fP})\subseteq \Fp\otimes E[\fp]$. However, the summands $a_i\exp_E(\partial u_{\fp}^{-1}\lambda)$ don't have to lie in $E[\fp]$. To overcome this issue, we choose elements $c_j$ in $A$ such that $\{c_j(\fP)\}_j$ is an $\bF$-basis of $\Fp$, and rewrite
$\sum_i d_i(\fP)\otimes a_i\in \Fp\otimes A$ as $\sum_j c_j(\fP)\otimes b_j$ for appropriate $b_j\in A$.\footnote{The reader should notice that in general $\sum_i d_i\otimes a_i\neq \sum_j c_j\otimes b_j$. The two sums only agree modulo $\fp\otimes A$.}
Accordingly, we rewrite the expression for $\delta_{\up}(f'_s(s)\lambda)(\fP)$, as
\[ \delta_{\up}(f'_s(s)\lambda)(\fP) = \sum_j c_j(\fP)\otimes b_j\exp_E(\partial \up^{-1}\lambda)
= \sum_j \chi_\fP(c_j)\otimes b_j\exp_E(\partial \up^{-1}\lambda). \]
Since the $c_j(\fP)$ build an $\bF$-basis of $\Fp$, we obtain that $b_j\exp_E(\partial \up^{-1}\lambda)$ is in $E[\fp]$ for all~$j$. As the last statement is independent of $\lambda$, we conclude that $\up^{-1}b_j\in \fp^{-1}A$ for all~$j$.

By applying the projection operator $\eta:=\eta_{\chi_{\mathfrak{P}}}$ of Lemma \ref{projection-on-gauss-space}, we obtain
\begin{eqnarray*} \delta_{\up}(f'_s(s)\lambda)(\fP)&=& \eta\left( \delta_{\up}(f'_s(s)\lambda)(\fP) \right) \\
&=& -\sum_{y\in (A/\fp)^\times}
 \sum_j \chi_{\fP}(y)^{-1}\chi_{\fP}(c_j)\otimes y b_j \exp_E(\partial \up^{-1}\lambda) \\
 &=& -\sum_j \sum_{x\in (A/\fp)^\times}
 \chi_{\fP}(x)^{-1}\otimes xc_jb_j \exp_E\left(\partial \up^{-1}\lambda\right) \\
 &=& - \sum_{x\in (A/\fp)^\times}
 \chi_{\fP}(x)^{-1}\otimes x \exp_E\left(\partial \up^{-1}\partial\left(\sum_j c_jb_j\right)\lambda\right).
 \end{eqnarray*}
Take into account that the change of variable in the third line is valid, since $b_j \exp_E(\partial \up^{-1}\lambda)\in E[\fp]$.
Finally, 
 \[ \sum_j (c_jb_j)\equiv \sum_i d_i a_i=f'_s(s) \mod \fp, \]
and therefore, $\up^{-1}\sum_j (c_jb_j)$ and $z_\fp f'_s(s)$ reduce to the same element in $\fp^{-1}A/A$, where $z_{\fp}$ is an element as in Definition \ref{additive}.  Hence, $\exp_E(\partial (u_{\fp}^{-1}\sum_j (c_jb_j))\lambda)=\psi_{\lambda}(f'_s(s))
 =\psi_{f'_s(s)\lambda}(1)$, and
 \[ \delta_{\up}(f'_s(s)\lambda)(\fP) = - \sum_{x\in (A/\fp)^\times}
 \chi_{\fP}(x)^{-1}\otimes \psi_{f'_s(s)\lambda}(x) = g_{\up}(f'_s(s)\lambda). \qedhere \]
 \end{proof}



\section{Special Values of Goss $L$-functions} \label{L-values}
We discuss in this last section a question of Thakur in \cite{thakur91}, about whether Gauss-Thakur sums enter into the theory of $L$-functions developed by Goss. According to a general trend, special functions of rank $1$ Anderson $A$-modules are linked to special $L$-values via Pellarin type formulas (see for instance \cite{pellarin}, \cite{tuan} or \cite{green}). As opposed to the number field setting, the appearance of Gauss-Thakur sums in values of $L$-functions should then be explained by Theorem \ref{values} rather than by a functional equation. We clarify this thought in two situations: when $E$ is the Carlitz-module or when $A$ is the coefficient ring of an elliptic curve and $E$ is a Drinfeld-Hayes module.

We warn the reader that we use in this subsection a slight abuse of notation: as we want to remove the tensor from our notation to have lightened formulas, we will implicitly give our results under the multiplication map $m:\overline{\bF}\otimes \CI\to \CI$ without referring to $m$ anymore.

\subsection{The case $E=\CC$}
Assume that $A=\bF[t]$ for $t$ in $A$ and fix another indeterminate $\theta$. According to Pellarin in \cite{pellarin}, one has the identity
\begin{equation}
\sum_{a\in A^{+}}{\frac{a(t)}{a(\theta)}}=-\frac{\tilde{\pi}}{(t-\theta)\omega(t)}, \nonumber
\end{equation}
where $A^+$ corresponds to the set of monic polynomials. Applying the $q$-Frobenius map in the variable $\theta$ followed by the assignation $t=\zeta$, a root of unity with minimal polynomial $\fp$ over $\bF$, \cite[Thm.~2.9]{angles} yields  
\begin{equation}\label{function}
L(\chi_{\zeta},q):=\sum_{a\in A^+}{\frac{a(\zeta)}{a(\theta)^q}}=-\frac{\tilde{\pi}^q}{(\zeta-\theta^q)(\zeta-\theta)\mathfrak{p}'(\zeta)g(\chi_{\zeta},\psi_{\tilde{\pi}})}.
\end{equation}

A similar identity can be derived for classical $L$-functions: Let $p$ be a prime number and $\chi$ a primitive character of $(\bZ/p\bZ)^{\times}$. Then, using the functional equation of $L(\chi,s)$, one easily derives 
\begin{equation}\label{number}
L(\chi,2)=\left(\sum_{n=1}^p{\frac{n^2}{p^2}}\right)\frac{\pi^2}{g(\bar{\chi})},
\end{equation}
where $g(\bar{\chi})$ is the Gauss sum attached to $\bar{\chi}$. The high level of similarity between (\ref{function}) and (\ref{number}) is of a remarkable charm, since it is unlikely that the Goss $L$-function satisfies a functional equation.

\subsection{The case where $C$ is an elliptic curve}
We now give the analogous result (\ref{function}) when $(C,\cO_C)$ is an elliptic curve over $\bF$ thanks to the work of Green and Papanikolas in \cite{green} and our Theorem \ref{values}. Let $(C,\cO_C)$ be an elliptic curve over $\bF$ and let $\infty$ be its origin. Then, $A=H^0(C\setminus \{\infty\},\cO_C)$ can be written as the coefficient ring $\bF[t,y]$ where $t$ and $y$ are solutions of a Weierstrass equation 
\begin{equation}
W(y,t):=y^2+a_1ty+a_3y-(t^3+a_2t^2+a_4t+a_6)=0 \quad (a_i\in \bF). \nonumber
\end{equation}
In particular, $A$ admits $\{t^i,t^j y~ |~ i\geq 0,~j\geq 0\}$ as an $\bF$-basis, and by Proposition \ref{proptate}, $\bT$ coincides with the affinoid algebra considered in \cite{green}. With respect to this decomposition, the term of highest degree in the expansion of $a$ in $A$ is denoted $\sgn(a)$. It defines an application $A\setminus \{0\} \to \bF^{\times}$ which extends to a group morphism $K_{\infty}^{\times}\to \bF^{\times}$, where $K_{\infty}\subset \CI$ is the completion of $K$ at the place $\infty$. A non-zero element $a\in A$ is said to be \textit{monic} if it belongs to the set 
\begin{equation}
A_+:=\{a\in A \setminus\{0\}~ |~\sgn(a)=1\}. \nonumber
\end{equation}
Let $m\geq 1$ be an integer. The following series 
\begin{equation}
L(A,m):=\sum_{a\in A_+}{\frac{a\otimes 1}{(1\otimes a)^m}} \nonumber
\end{equation}
converges in $\bT$ to the \textit{Pellarin $L$-series at} $m$ of $A$. Let $\fp$ be a non-zero prime ideal of $A$, let $\mathfrak{P}$ be a maximal ideal in $\overline{\bF}\otimes A$ above $\fp$ and let $\chi_{\mathfrak{P}}$ be the evaluation at $\mathfrak{P}$. The value 
\begin{equation}
L(\chi_{\mathfrak{P}},m):=L(A,m)(\mathfrak{P})=\sum_{a\in A_+}{\frac{\chi_{\mathfrak{P}}(a)}{a^m}} \quad \text{in~} \CI \nonumber
\end{equation}
is referred to the \textit{Goss $L$-value of $\chi_{\mathfrak{P}}$ at} $m$. 

Let $H\subset \CI$ be the Hilbert class field of $K$. By a \textit{sign normalized} Drinfeld-Hayes $A$-module over $H$, we will mean a Drinfeld-Hayes $A$-module over $H$ which, as a $\bF$-vector space scheme, is equal to $\bG_{a,H}$ and where the $A$-action stems from the action of $t$ and $y$ given by
\begin{equation}
\phi_t=t+x_1\tau+\tau^2, \quad \phi_y=y+y_1\tau+y_2\tau^2+\tau^3 \quad \text{in~}\End_{\bF,H}(\bG_{a,H})=H\{\tau\}, \nonumber
\end{equation}
where $\tau$ is the Frobenius on $\bG_{a,H}$ and where $x_1$, $y_1$, $y_2$ are elements of $H$ such that $W(\phi_t,\phi_y)=0$.

We let $V=V_{\id}$ be a divisor on $C\times_{\bF}\Spec(H)$ as in Lemma \ref{divisor} for $\kappa=\id_{\bG_{a,L}}$. We define the \textit{unit disk} $\operatorname{Spm}\bT$ of $C\times_{\bF} \Spec(\CI)$ as the set
\begin{equation}
\operatorname{Spm}\bT:=\{\fm ~\text{maximal~ideal~of}~A\otimes \CI ~|~ \fm\bT\neq \bT\}. \nonumber
\end{equation}
The naming ``unit disk" follows from \cite[Cor.~2.2.13]{bosch}. In \cite{green}, $V$ is chosen so that it is not supported on $\operatorname{Spm}\bT$. We prove the following Theorem:
\begin{Theorem}\label{special-L-values-elliptic}
Let $E$ be a sign-normalized Drinfeld-Hayes $A$-module over $H$ for which $V$ is not supported on $\operatorname{Spm}\bT$. 
For any non-zero element $\lambda$ in $\Lambda_E\subset \CI$, and for all non negative integers $n$, we have
\begin{equation}
L(\chi_{\mathfrak{P}},q^n)\in \Fp H \cdot \frac{\lambda^{q^n}}{g(\chi_{\mathfrak{P}},\psi_{\lambda})}. \nonumber
\end{equation}
\end{Theorem}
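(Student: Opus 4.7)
The plan is to combine the Pellarin-type identities of Green and Papanikolas \cite{green} with our Theorem \ref{values}, and then pass to arbitrary periods by $A$-linearity. First I would fix a convenient non-zero period $\tilde{\pi}_E \in \Lambda_E$ and take the corresponding special function $\omega \in \fsf(E)$ provided by Theorem \ref{lattice}, so that $\omega = \delta_{\up}(f'_s(s)\tilde{\pi}_E)$ for some generator $s$ of $K$ over $\bF(\up)$ and its minimal polynomial $f_s$. The main input from \cite{green} is a Pellarin-type identity in $\bT$ of the shape
\begin{equation*}
    L(A,1)\cdot \omega \cdot P \;=\; -\tilde{\pi}_E,
\end{equation*}
where $P$ is a rational function in the fraction field of $A\otimes H$ built from the shtuka function $f_{\id}$ and its low-order Frobenius twists. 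The hypothesis that the Drinfeld divisor $V$ is not supported on the unit disk $\operatorname{Spm}\bT$ is used precisely here: by Lemma \ref{divisor}, $V$ governs the poles and zeros of $f_{\id}$, so avoiding $\operatorname{Spm}\bT$ guarantees that $P$ and its iterated Frobenius twists are \emph{units} of $\bT_H := A\hat{\otimes}H\subset \bT$ near any prime $\fP$ above $\fp$.

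The second step is to apply the $n$-th Frobenius twist $\tau^n$ to this identity. Since $\tau$ fixes the $A$-factor of $\bT$, we have $\tau^n(L(A,1)) = L(A,q^n)$ and $\tau^n(\tilde{\pi}_E) = \tilde{\pi}_E^{q^n}$, while repeated application of the shtuka equation $\omega^{(1)} = f_{\id}\cdot \omega$ (Proposition \ref{mem}) gives $\omega^{(n)} = \omega\cdot \prod_{i=0}^{n-1} f_{\id}^{(i)}$. Absorbing these twists into the rational factor produces an identity
\begin{equation*}
L(A,q^n)\cdot \omega\cdot Q_n \;=\; -\tilde{\pi}_E^{q^n}
\end{equation*}
with $Q_n$ again a unit of $\bT_H$ at $\fP$ under our hypothesis on $V$. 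Now evaluate at $\fP$: the factors $Q_n(\fP)$ and $f'_s(s)(\fP)\in \Fp H$, while Theorem \ref{values} identifies $\omega(\fP) = g_{\up}(f'_s(s)\tilde{\pi}_E) = f'_s(s(\fP))\cdot g(\chi_{\fP},\psi_{\tilde{\pi}_E})$ by $A$-linearity of $g_{\up}$. Solving for $L(\chi_{\fP}, q^n)$ gives the desired containment for the specific choice $\lambda = \tilde{\pi}_E$.

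Finally, I would extend to an arbitrary non-zero $\lambda \in \Lambda_E$. Since $\Lambda_E$ is a rank one projective $A$-module, there exist $a,b\in A\setminus\{0\}$ with $a\tilde{\pi}_E = b\lambda$. The $A$-linearity of the map $\Lambda_E \to G(E,\chi_{\fP})$, $\mu \mapsto g(\chi_{\fP},\psi_\mu)$ (Definition \ref{additive}) together with the fact that the $A$-action on $G(E,\chi_{\fP})$ is via $\chi_{\fP}$, yields $g(\chi_{\fP},\psi_{c\mu}) = \chi_{\fP}(c)\cdot g(\chi_{\fP},\psi_\mu)$ for $c\in A$. Consequently,
\begin{equation*}
\frac{\lambda^{q^n}}{g(\chi_{\fP},\psi_\lambda)} \;=\; \frac{a^{q^n}\,\chi_{\fP}(b)}{b^{q^n}\,\chi_{\fP}(a)}\cdot \frac{\tilde{\pi}_E^{q^n}}{g(\chi_{\fP},\psi_{\tilde{\pi}_E})},
\end{equation*}
and the ratio lies in $\Fp K^\times \subseteq \Fp H^\times$, so the containment for $\tilde{\pi}_E$ propagates to every $\lambda$.

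The technical obstacle I expect is the bookkeeping in the second step: pinning down the precise shape of $Q_n$ under iterated Frobenius twists, and verifying that the hypothesis $V \not\subset \operatorname{Spm}\bT$ indeed ensures that $Q_n(\fP)$ stays finite and non-zero in $\Fp H$ for \emph{every} $n\geq 0$ (rather than only for $n=0$). This amounts to a uniform control of the supports of $\tau^{i}V$ on the unit disk, which should follow from the fact that $\tau$ acts by the $q$-power Frobenius on the $\CI$-factor and leaves the closed points of $C$ over $\bF$ invariant up to Galois.
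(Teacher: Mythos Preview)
Your plan coincides with the paper's: invoke the Green--Papanikolas Pellarin identity, Frobenius-twist it $n$ times, evaluate at $\fP$, and apply Theorem~\ref{values}. The paper also reduces to a single period $\tilde{\pi}_E$ first; it quotes from \cite{green} that $\Lambda_E$ and $\fsf(E)$ are in fact \emph{free} of rank one over $A$, which makes that reduction immediate and slightly simplifies your final paragraph.

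The obstacle you flag is a phantom. Rather than solving for $L$ and carrying a rational factor $Q_n$ whose regularity and non-vanishing at $\fP$ must be argued for every $n$, the paper keeps $\omega_E$ on the same side:
\[
\omega_E\, L(A,1)^{(n)}\;=\;\frac{h_E^{(n)}}{f_E f_E^{(1)}\cdots f_E^{(n)}}\,\tilde{\pi}_E^{q^n}.
\]
The left side lies in $\bT$ simply because each factor does, so one may evaluate at $\fP$; the right side, being a rational function defined over $H$, then lands in $\Fp H\cdot\tilde{\pi}_E^{q^n}$ automatically---no tracking of the supports of $\tau^i V$ is required. What your outline omits are the two non-vanishing inputs that then allow division: (i) $\omega_E\in\bT^{\times}$, quoted from \cite{green}, which gives $\omega_E(\fP)\neq 0$; and (ii) the \emph{tensorless} Gauss--Thakur sum $g(\chi_{\fP},\psi_{\tilde{\pi}_E})\in\CI$ is non-zero, quoted from \cite{thakur91}. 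Point (ii) is genuinely needed, since by Remark~\ref{multiplicativemap} the multiplication map $\Fp\otimes E[\fp]\to\CI$ need not be injective, so Proposition~\ref{nonzero}\ref{item:B} alone does not settle it.
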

\begin{proof}
One easily deduces from \cite[Thm.~4.6,~Prop.~4.3,~Prop.~7.1]{green} the following:
\begin{enumerate}
\item the period lattice $\Lambda_E$ of $E$ and the special functions $\fsf(E)$ are free of rank $1$ over $A$, and we write $\Lambda_E=\tilde{\pi}_E A$,
\item $\omega_E:=\delta_t(\mu'(t)\otimes\tilde{\pi}_E)$ in $\fsf(E)$, where $\mu$ is the minimal polynomial of $y$ over $\bF(t)$, is an invertible element of $\bT$ and thus generates $\fsf(E)$ as an $A$-module,
\item there is an (explicit) element $h_E$ of $A\otimes H$ such that
\begin{equation} \label{gpp}
L(A,1)=\frac{h_E}{f_{E}} \frac{\tilde{\pi}_E}{\omega_E}
\end{equation}
where $f_{E}$ is the shtuka function of $E$ associated to $\kappa=\id_{\bG_{a,H}}$.
\end{enumerate}
Without loss of generality, we prove our statement for $\lambda=\tilde{\pi}_E$. We have $L(A,1)^{(n)}=L(A,q^n)$ and hence, $L(\chi_{\mathfrak{P}},q^n)=L(A,1)^{(n)}(\mathfrak{P})$. By formula (\ref{gpp}), $\omega_E L(A,1)^{(n)}= h_E^{(n)}/f_E f_E^{(1)}\cdots f_E^{(n)}$ is in $\bT$ and therefore $(h_E^{(n)}/f_E f_E^{(1)}\cdots f_E^{(n)})(\mathfrak{P})$ is well-defined in $\Fp H$. As $\omega_E$ is an invertible element in the Tate algebra, $\omega_E(\mathfrak{P})\neq 0$. Theorem \ref{values} yields
\begin{equation}
g(\chi_{\mathfrak{P}},\psi_{\tilde{\pi}_E})L(\chi_{\mathfrak{P}},q^n)\in \Fp \cdot \omega_E(\mathfrak{P})L(A,1)^{(n)}(\mathfrak{P}) \subset \Fp H\cdot \tilde{\pi}_E^{q^n}. \nonumber
\end{equation}
Finally, by \cite[Sect.~1]{thakur91}, $g(\chi_{\mathfrak{P}},\psi_{\tilde{\pi}_E})$ seen as a tensorless sum in $\CI$ is non-zero.
\end{proof}

\addcontentsline{toc}{section}{References}
\bibliographystyle{alpha}
\bibliography{bibliography}

\def\cprime{$'$}
\begin{thebibliography}{ANDTR17}

\bibitem[And86]{anderson}
Greg~W. Anderson.
\newblock {$t$}-motives.
\newblock {\em Duke Math. J.}, 53(2):457--502, 1986.

\bibitem[And94]{andersonrankone}
Greg~W. Anderson.
\newblock Rank one elliptic {$A$}-modules and {$A$}-harmonic series.
\newblock {\em Duke Math. J.}, 73(3):491--542, 1994.

\bibitem[ANDTR17]{tuan}
Bruno Angl\`es, Tuan Ngo~Dac, and Floric Tavares~Ribeiro.
\newblock Special functions and twisted {$L$}-series.
\newblock {\em J. Th\'{e}or. Nombres Bordeaux}, 29(3):931--961, 2017.

\bibitem[AP15]{angles}
Bruno Angl{\`e}s and Federico Pellarin.
\newblock Universal {G}auss-{T}hakur sums and {$L$}-series.
\newblock {\em Invent. Math.}, 200(2):653--669, 2015.

\bibitem[AT90]{andersonthakur}
Greg~W. Anderson and Dinesh~S. Thakur.
\newblock Tensor powers of the {C}arlitz module and zeta values.
\newblock {\em Ann. of Math. (2)}, 132(1):159--191, 1990.

\bibitem[BH07]{boeckle}
Gebhard B\"{o}ckle and Urs Hartl.
\newblock Uniformizable families of {$t$}-motives.
\newblock {\em Trans. Amer. Math. Soc.}, 359(8):3933--3972, 2007.

\bibitem[Bos14]{bosch}
Siegfried Bosch.
\newblock {\em Lectures on formal and rigid geometry}, volume 2105 of {\em
  Lecture Notes in Mathematics}.
\newblock Springer, Cham, 2014.

\bibitem[EP14]{el-guindy}
Ahmad {El-Guindy} and Matthew~A. {Papanikolas}.
\newblock {Identities for Anderson generating functions for Drinfeld modules.}
\newblock {\em {Monatsh. Math.}}, 173(4):471--493, 2014.

\bibitem[Gos96]{goss}
David Goss.
\newblock {\em Basic structures of function field arithmetic}, volume~35 of
  {\em Ergebnisse der Mathematik und ihrer Grenzgebiete (3) [Results in
  Mathematics and Related Areas (3)]}.
\newblock Springer-Verlag, Berlin, 1996.

\bibitem[GP18]{green}
Nathan Green and Matthew~A. Papanikolas.
\newblock Special {$L$}-values and shtuka functions for {D}rinfeld modules on
  elliptic curves.
\newblock {\em Res. Math. Sci.}, 5(1):Paper No. 4, 47, 2018.

\bibitem[Gre17]{green2}
Nathan Green.
\newblock Tensor powers of rank 1 drinfeld modules and periods.
\newblock available at https://arxiv.org/abs/1706.03854, 2017.

\bibitem[Har77]{hartshorne}
Robin Hartshorne.
\newblock {\em Algebraic geometry}.
\newblock Springer-Verlag, New York-Heidelberg, 1977.
\newblock Graduate Texts in Mathematics, No. 52.

\bibitem[HJ16]{hartl}
Urs Hartl and Ann-Kristin Juschka.
\newblock Pink's theory of hodge structures and the hodge conjectures over
  function fields.
\newblock {\em Proceedings on the conference on "$t$-motives: Hodge Structures,
  Transcendence and other Motivic Aspects"}, G. Boeckle, D. Goss, U. Hartl and
  M. Papanikolas, editors, 2016.

\bibitem[Mat89]{matsumura}
Hideyuki Matsumura.
\newblock {\em Commutative ring theory}, volume~8 of {\em Cambridge Studies in
  Advanced Mathematics}.
\newblock Cambridge University Press, Cambridge, second edition, 1989.
\newblock Translated from the Japanese by M. Reid.

\bibitem[Mau18]{maurischat}
Andreas Maurischat.
\newblock Periods of t-modules as special values.
\newblock {\em {Journal of Number Theory}}, 2018.
\newblock {DOI}:10.1016/j.jnt.2018.09.024.

\bibitem[Neu99]{neukirch}
J\"{u}rgen Neukirch.
\newblock {\em Algebraic number theory}, volume 322 of {\em Grundlehren der
  Mathematischen Wissenschaften [Fundamental Principles of Mathematical
  Sciences]}.
\newblock Springer-Verlag, Berlin, 1999.
\newblock Translated from the 1992 German original and with a note by Norbert
  Schappacher, With a foreword by G. Harder.

\bibitem[Pel08]{pellarin08}
Federico Pellarin.
\newblock Aspects de l'ind\'ependance alg\'ebrique en caract\'eristique non
  nulle (d'apr\`es {A}nderson, {B}rownawell, {D}enis, {P}apanikolas, {T}hakur,
  {Y}u, et al.).
\newblock {\em Ast\'erisque}, (317):Exp. No. 973, viii, 205--242, 2008.
\newblock S\'eminaire Bourbaki. Vol. 2006/2007.

\bibitem[Pel12]{pellarin}
Federico Pellarin.
\newblock Values of certain {$L$}-series in positive characteristic.
\newblock {\em Ann. of Math. (2)}, 176(3):2055--2093, 2012.

\bibitem[Ser62]{serre}
Jean-Pierre Serre.
\newblock Endomorphismes compl\`etement continus des espaces de {B}anach
  {$p$}-adiques.
\newblock {\em Inst. Hautes \'{E}tudes Sci. Publ. Math.}, (12):69--85, 1962.

\bibitem[Sin97]{sinha}
Samarendra~K. Sinha.
\newblock Periods of $t$ -motives and transcendence.
\newblock {\em Duke Math. J.}, 88(3):465--535, 06 1997.

\bibitem[Tha88]{thakur88}
Dinesh~S. Thakur.
\newblock Gauss sums for {${\bf F}_q[T]$}.
\newblock {\em Invent. Math.}, 94(1):105--112, 1988.

\bibitem[Tha91a]{thakur91b}
Dinesh~S. Thakur.
\newblock Gamma functions for function fields and {D}rinfel\cprime d modules.
\newblock {\em Ann. of Math. (2)}, 134(1):25--64, 1991.

\bibitem[Tha91b]{thakur91}
Dinesh~S. Thakur.
\newblock Gauss sums for function fields.
\newblock {\em J. Number Theory}, 37(2):242--252, 1991.

\bibitem[Tha93a]{thakur}
Dinesh~S. Thakur.
\newblock Behaviour of function field {G}auss sums at infinity.
\newblock {\em Bull. London Math. Soc.}, 25(5):417--426, 1993.
\newblock With an appendix by Jos\'{e} Felipe Voloch.

\bibitem[Tha93b]{thakur93}
Dinesh~S. Thakur.
\newblock Shtukas and {J}acobi sums.
\newblock {\em Invent. Math.}, 111(3):557--570, 1993.

\end{thebibliography}

\end{document}